\newtheorem{theorem}{Theorem}[section]
\newtheorem{definition}[theorem]{Definition}
\newtheorem{lemma}[theorem]{Lemma}
\newtheorem{proposition}[theorem]{Proposition}
\newtheorem{remark}[theorem]{Remark}
\newenvironment{proof}[1][Proof]{\textbf{#1.} }{\hfill\rule{0.5em}{0.5em}}
{\catcode`\@=11\global\let\AddToReset=\@addtoreset
\AddToReset{equation}{section}

\AddToReset{theorem}{section}

\begin{document}
\title{Pointwise gradient estimates for  a class of singular  quasilinear equation with measure data}
	\author{{\bf Quoc-Hung Nguyen\thanks{E-mail address:  qn2@nyu.edu, Department of Mathematics, New York University Abu Dhabi, Abu Dhabi, United Arab Emirates.}~~and~Nguyen Cong Phuc\thanks{E-mail address: pcnguyen@math.lsu.edu, Department of Mathematics, Louisiana State University, 303 Lockett Hall,
				Baton Rouge, LA 70803, USA. N. C. Phuc is supported in part by Simons Foundation, award number 426071. }}}
\date{}  
\maketitle
\begin{abstract}
Local and global pointwise gradient estimates are obtained for solutions to the quasilinear elliptic  equation with measure data $-\operatorname{div}(A(x,\nabla u))=\mu$
in a bounded and possibly nonsmooth domain $\Omega$ in $\mathbb{R}^n$. Here $\operatorname{div}(A(x,\nabla u))$ is modeled after the $p$-Laplacian. Our results extend earlier known results to the singular case in which $\frac{3n-2}{2n-1}<p\leq 2-\frac{1}{n}$.
  	
\end{abstract}   
                       \tableofcontents
 \section{Introduction and main results }

 In this paper,  the quasilinear elliptic equation with measure data 
 \begin{equation}\label{quasi-measure}
 -\operatorname{div}(A(x,\nabla u))=\mu
 \end{equation}
 is considered in a bounded open subset $\Omega$ of $\mathbb{R}^n$, $n\geq 2$. Here $\mu$ is a finite signed measure in $\Omega$ and
the nonlinearity  $A = (A_1, \dots, A_n):\mathbb{R}^n\times \mathbb{R}^n\to \mathbb{R}^n$ is vector valued function.
 Our main goal is to obtain pointwise estimates for gradients of solutions to equation  \eqref{quasi-measure} by means of nonlinear potentials of Wolff type. 
 To that end, throughout  the paper we  assume that $A=A(x,\xi)$ satisfies the following growth, ellipticity and  continuity assumptions: there exist $\Lambda\geq 1$, $1<p<2$, $s\geq 0$, and $\alpha\in (0, 2-p)$ such that
                                       \begin{equation}
                                       \label{condi1}| A(x,\xi)|\le \Lambda (s^2+|\xi|^2)^{(p-1)/2}, \quad | D_\xi A(x,\xi)|\le \Lambda (s^2+|\xi|^2)^{(p-2)/2},
                                       \end{equation}
                                        \begin{equation}
                                       \label{condi2}  \langle D_\xi A(x,\xi)\eta,\eta\rangle\geq \Lambda^{-1}  (s^2+|\xi|^2)^{(p-2)/2} |\eta|^2, 
                                       \end{equation}
                                       \begin{align}
                                        |D_\xi A(x,\xi)-D_\xi A(x,\eta)|\leq  & \Lambda (s^2+|\xi|^2)^{(2-p)/2} (s^2+|\eta|^2)^{(2-p)/2}\times  \nonumber \\
                                          & \qquad \times (s^2+|\xi|^2+|\eta|^2)^{(2-p-\alpha)/2}|\xi-\eta|^\alpha, \label{condi3}
                                       \end{align}
                                       and 
                                       \begin{equation}\label{Dini}
|A(x,\xi)-A(x_0,\xi)|\leq \Lambda \, \omega(|x-x_0|)(s^2+|\xi|^2)^{(p-1)/2}
                                       \end{equation}
 for every $x$ and  $x_0$ in $\mathbb{R}^n$ and every $(\xi,\eta)\in \mathbb{R}^n\times \mathbb{R}^n\backslash\{(0,0)\}$.  In \eqref{Dini},
  $\omega: [0,\infty)\to [0,1]$ is a non-decreasing function with $\omega(0)=0=\lim_{r\downarrow 0} \omega(r)$ and satisfies the Dini's condition:
                                          \begin{align}\label{dini}
                                          \int_{0}^{1}\omega(r)^{\gamma_0} \frac{dr}{r}=D<+\infty
                                          \end{align}
                                          for some $\gamma_0\in\left(\frac{n}{2n-1}, \frac{n(p-1)}{n-1}\right)$.

                                          A typical model for \eqref{quasi-measure} is obviously given by the $p$-Laplace equation with measure data
                                          \begin{equation}\label{p-Laplace}
                                          -\Delta_p \, u:= -{\rm div}(|\nabla u|^{p-2} \nabla u)=\mu \quad \text{in~} \Omega,
                                          \end{equation}
                                          or its nondegenerate version $(s>0)$: 
                                          $$-{\rm div}((|\nabla u|+ s^2)^{\frac{p-2}{2}} \nabla u)=\mu \quad \text{in~} \Omega.$$

                                          In this paper, we are concerned only with singular case in which 
                                          \begin{equation}\label{prestrict}
                                         \frac{3n-2}{2n-1} <p\leq 2-\frac{1}{n}.
                                          \end{equation}                   
                                          The case $p>2- \frac{1}{n}$ was considered in the  work \cite{Duzamin2, KM} (see also \cite{55DuzaMing, Mi2}) in which the authors  obtained  that if $u\in C^1(\Omega)$ solves \eqref{p-Laplace}  then it holds that 
                                          \begin{equation}\label{DMK}
                                          |\nabla u(x)| \leq C(n, p,\Lambda,D) \Big\{ [{\rm \bf I}^{R}_1(|\mu|)(x)]^{\frac{1}{p-1}} + \fint_{B_{R}(x)}|\nabla u(y)| dy\Big\}
                                          \end{equation}
                                          for every ball $B_R(x)\subset\Omega$ with $R\leq 1$. Here $\fint_E$ indicates the integral average over a measurable set $E$, and 
                                          $${\rm \bf I}^{R}_1(|\mu|)(x)=\int_0^R \frac{|\mu|(B_t(x))}{t^{n-1}}\frac{dt}{t}$$
                                          is a truncated first order Riesz's potential of $|\mu|$ at the point $x$.
                                          The restriction $p>2-1/n$ in \cite{Duzamin2, KM} has something to do with the fact that, in general, solutions to   \eqref{p-Laplace}  for a measure $\mu$ may not belong to  the Sobolev space $W^{1,  1}_{\rm loc}(\Omega)$ when $1<p\leq 2-1/n$. This is well known and can be seen by taking, e.g., $\mu$ to be the Dirac mass at a point.
                                          It also reveals that the linear potential ${\rm \bf I}^{R}_1(|\mu|)$ used in \eqref{DMK} may no longer be the right one when $1<p\leq 2-\frac{1}{n}$, and new ideas must be developed in order to attack this strongly singular case.

                                          In this paper, under the restriction \eqref{prestrict} we show that the solution  gradient can be pointwise controlled by the following (nonlinear) truncated Wolff's  potential 
                                             $${\bf P}^{R}_{\gamma}(|\mu|)(x) := \int_0^R \Big(\frac{|\mu|(B_t(x))}{t^{n-1}}\Big)^{\gamma}\frac{dt}{t}$$
                                          for certain $0<\gamma<1$. Note that ${\bf P}^{R}_{\gamma_1}(|\mu|) \leq C\, {\bf P}^{2R}_{\gamma_2}(|\mu|)$
                                          whenever $\gamma_1> \gamma_2>0$, and ${\bf I}_1^{R}(|\mu|) \leq C \, {\bf P}^{2R}_{\gamma}(|\mu|)^{\frac{1}{\gamma}}$ provided $0<\gamma<1$.
                                          
                                          Our main result is stated as follows.
                                          
                                          \begin{theorem}\label{grad1}
                                          	Let $\frac{3n-2}{2n-1}<p\leq 2-\frac{1}{n}$ and suppose that $u\in C^1(\Omega)$ solves \eqref{quasi-measure} for a finite measure $\mu$ in $\Omega$. Then under \eqref{condi1}-\eqref{dini} with $\gamma_0 \in\left(\frac{n}{2n-1}, \frac{n(p-1)}{n-1} \right)$
                                          	 we have 
                                          	\begin{equation}\label{Wpointwise}
                                          	|\nabla u(x)| \leq C \left\{ \Big[{\bf P}^{R}_{\gamma_0}(|\mu|)(x)\Big]^{\frac{1}{\gamma_0 (p-1)}} + \Big(\fint_{B_{R}(x)}(|\nabla u(y)|+s)^{\gamma_0} dy\Big)^{\frac{1}{\gamma_0}}\right\}
                                          	\end{equation}
                                          	for every ball $B_R(x)\subset\Omega$, where $C$ is a constant only depending on $n, p,\alpha,\Lambda,D,\gamma_0$.
                                          \end{theorem}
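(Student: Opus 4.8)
The plan is to run the classical scheme for pointwise potential bounds — comparison with homogeneous equations, regularity for those homogeneous equations, and a dyadic excess-decay iteration — but carried out entirely in terms of the sublinear averages $\big(\fint(\,\cdot\,)^{\gamma_0}\big)^{1/\gamma_0}$, since in the range \eqref{prestrict} one cannot expect $\nabla u\in W^{1,1}_{\mathrm{loc}}$ and so must avoid $L^1$ excess quantities. Because $u\in C^1(\Omega)$, the gradient is continuous, every average below is finite, $|\nabla u(x)|=\lim_{\rho\downarrow 0}|(\nabla u)_{B_\rho(x)}|$, and it suffices to prove \eqref{Wpointwise} as an a priori estimate with constants depending only on $n,p,\alpha,\Lambda,D,\gamma_0$; if ${\bf P}^R_{\gamma_0}(|\mu|)(x)=+\infty$ there is nothing to prove.

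Fix a ball $B_{2r}=B_{2r}(x_0)\subset\Omega$ and let $w\in u+W^{1,p}_0(B_r)$ solve $\operatorname{div}(A(x,\nabla w))=0$ in $B_r$ and $v\in w+W^{1,p}_0(B_{r/2})$ solve the frozen equation $\operatorname{div}(A(x_0,\nabla v))=0$ in $B_{r/2}$. Using the structure conditions \eqref{condi1}--\eqref{condi2} and a Boccardo--Gallou\"et type truncation argument adapted to the singular exponent, one proves, for every $\gamma_0\in\big(\tfrac{n}{2n-1},\tfrac{n(p-1)}{n-1}\big)$, the comparison estimate
\[
\Big(\fint_{B_r}|\nabla u-\nabla w|^{\gamma_0}\,dy\Big)^{1/\gamma_0}\le C\,\Big(\frac{|\mu|(B_r)}{r^{n-1}}\Big)^{\frac{1}{p-1}},
\]
and, using \eqref{condi3}--\eqref{Dini} together with an energy comparison between $v$ and $w$,
\[
\Big(\fint_{B_{r/2}}|\nabla w-\nabla v|^{\gamma_0}\,dy\Big)^{1/\gamma_0}\le C\,\omega(2r)^{\vartheta}\Big(\fint_{B_r}(|\nabla w|+s)^{\gamma_0}\,dy\Big)^{1/\gamma_0}
\]
for a structural exponent $\vartheta>0$ such that $\omega^{\vartheta}$ retains the Dini summability \eqref{dini}. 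This pair of estimates is the technical heart, and I expect the first one to be the main obstacle: the $L^1$ energy identity available for $p\ge 2$ is lost, the modulus of ellipticity $(s^2+|\xi|^2)^{(p-2)/2}$ is singular at the origin, and the lower bound $\gamma_0>\tfrac{n}{2n-1}$ — equivalently the restriction $p>\tfrac{3n-2}{2n-1}$ — is precisely what is needed to close the truncation estimate and the self-improving gradient integrability on which it rests, with special care required when $(\nabla u)_{B_r}$ is small compared with the measure term.

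Next I would use the interior regularity theory for the frozen homogeneous equation: by the De Giorgi--Nash--Moser theory together with the Uhlenbeck/DiBenedetto/Tolksdorf $C^{1,\beta}$ theory there is $\beta=\beta(n,p,\Lambda)\in(0,1)$ with the sup bound $\|\nabla v\|_{L^\infty(B_{r/4})}\le C\big(\fint_{B_{r/2}}(|\nabla v|+s)^{\gamma_0}\big)^{1/\gamma_0}$ and the excess decay $\big(\fint_{B_\rho}|\nabla v-(\nabla v)_{B_\rho}|^{\gamma_0}\big)^{1/\gamma_0}\le C(\rho/r)^\beta\big(\fint_{B_{r/2}}|\nabla v-(\nabla v)_{B_{r/2}}|^{\gamma_0}\big)^{1/\gamma_0}$ for $0<\rho\le r/4$. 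Setting $E(t):=\inf_{c\in\mathbb{R}^n}\big(\fint_{B_t(x_0)}|\nabla u-c|^{\gamma_0}\big)^{1/\gamma_0}$ and $\lambda(t):=\big(\fint_{B_t(x_0)}(|\nabla u|+s)^{\gamma_0}\big)^{1/\gamma_0}$, combining the excess decay for $v$ with the two comparison estimates through the quasi-triangle inequality for $\|\cdot\|_{L^{\gamma_0}}$ and transferring the $L^{\gamma_0}$ norms between concentric balls yields the one-step inequality
\[
E(\sigma r)\le C\sigma^{\beta}E(r)+C\sigma^{-n/\gamma_0}\Big(\frac{|\mu|(B_r)}{r^{n-1}}\Big)^{\frac{1}{p-1}}+C\sigma^{-n/\gamma_0}\,\omega(2r)^{\vartheta}\lambda(r),\qquad 0<\sigma\le\tfrac14.
\]

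Finally I would iterate along $r_j=\sigma^j R$ with $\sigma$ fixed small enough that $C\sigma^{\beta}\le\tfrac12$, absorbing the leading $E$-term. Since the near-optimal constants $c_j$ for $E(r_j)$ obey $|c_{j+1}-c_j|\le C\sigma^{-n/\gamma_0}E(r_j)$ and $c_j\to\nabla u(x_0)$, summing bounds $|\nabla u(x_0)|$ by $|c_0|$ plus $C\sum_j E(r_j)$. The measure part $\sum_j\big(\tfrac{|\mu|(B_{r_j})}{r_j^{n-1}}\big)^{\frac1{p-1}}$ is controlled, using that $\gamma_0(p-1)<1$ in the range \eqref{prestrict} (because $\gamma_0<\tfrac{n(p-1)}{n-1}\le1$ and $p-1<1$) together with the elementary embedding $\ell^{\gamma_0(p-1)}\hookrightarrow\ell^1$, by $\big(\sum_j(\tfrac{|\mu|(B_{r_j})}{r_j^{n-1}})^{\gamma_0}\big)^{\frac1{\gamma_0(p-1)}}\le C\,\big[{\bf P}^{R}_{\gamma_0}(|\mu|)(x_0)\big]^{\frac1{\gamma_0(p-1)}}$, the last step comparing the dyadic sum with the integral defining ${\bf P}^R_{\gamma_0}$ (up to a harmless change of truncation radius). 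The coefficient part is handled by a bootstrap: one shows $\sup_j\lambda(r_j)\le M$ for a fixed multiple $M$ of the right-hand side of \eqref{Wpointwise}, by induction on $j$, using $\lambda(r_j)\le C(|c_j|+E(r_j)+s)$, the Dini bound $\sum_j\omega(2r_j)^{\vartheta}\le CD$, and a splitting of this sum into a small tail (absorbed into $M$) and finitely many initial scales (each with $\lambda(r_j)\le\sigma^{-nj/\gamma_0}\lambda(R)$, hence controlled by $\lambda(R)$). Assembling these bounds and letting $j\to\infty$ gives \eqref{Wpointwise}.
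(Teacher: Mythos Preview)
Your outline is the right one, but the comparison estimate you claim between $u$ and $w$ is too strong and, in the singular range $p<2$, is not true as written. Because the monotonicity only yields $(A(x,\xi)-A(x,\eta))\cdot(\xi-\eta)\gtrsim(s^2+|\xi|^2+|\eta|^2)^{(p-2)/2}|\xi-\eta|^2$, any truncation/H\"older argument unavoidably produces a factor of the ambient gradient size; indeed, linearizing $-\Delta_p$ around a large constant gradient $\xi_0$ shows that $|\nabla u-\nabla w|$ is genuinely of order $|\xi_0|^{2-p}\,|\mu|(B_r)/r^{n-1}$, which dominates $\big(|\mu|(B_r)/r^{n-1}\big)^{1/(p-1)}$ whenever $|\xi_0|\gg\big(|\mu|(B_r)/r^{n-1}\big)^{1/(p-1)}$. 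The correct bound (Lemma~\ref{111120149}, taken from \cite{QH4}) is
\[
\Big(\fint_{B_{2r}}|\nabla u-\nabla w|^{\gamma_0}\Big)^{\frac1{\gamma_0}}\le C\Big(\frac{|\mu|(B_{2r})}{r^{n-1}}\Big)^{\frac1{p-1}}+C\,\frac{|\mu|(B_{2r})}{r^{n-1}}\Big(\fint_{B_{2r}}(|\nabla u|+s)^{\gamma_0}\Big)^{\frac{2-p}{\gamma_0}},
\]
so the one-step inequality must carry the extra term $C_\sigma\big(\tfrac{|\mu|(B_{2r})}{r^{n-1}}\big)^{\gamma_0}\lambda(r)^{\gamma_0(2-p)}$. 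This couples the measure with the gradient average at \emph{every} scale, not merely through $\omega$, and your Dini--bootstrap does not absorb it. The paper handles it by splitting into cases according to whether $T_j:=\lambda(r_j)^{\gamma_0}$ exceeds $|U(x_0)|$ (with $U=U_{\gamma_0+1}(\nabla u)$) and, on the scales where $T_j\le|U(x_0)|$, replacing $T_j^{2-p}$ by $|U(x_0)|^{2-p}$ and then using Young's inequality to peel off a small multiple of $|U(x_0)|$.

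A second, related gap: the excess decay you invoke for the frozen equation, $(\fint_{B_\rho}|\nabla v-(\nabla v)_{B_\rho}|^{\gamma_0})^{1/\gamma_0}\le C(\rho/r)^\beta(\fint_{B_{r/2}}|\nabla v-(\nabla v)_{B_{r/2}}|^{\gamma_0})^{1/\gamma_0}$, is not delivered by the standard $C^{1,\beta}$ theory (which gives oscillation decay against $\sup(|\nabla v|+s)$, not excess against excess). The paper proves an excess-to-excess estimate of exactly this type (Theorem~\ref{hol-Du}), but for the nonlinear map $U_q(\nabla v)=(s^2+|\nabla v|^2)^{(q-2)/2}\nabla v$ with $q=\gamma_0+1$, and accordingly runs the iteration with $\mathbf I(\rho)=\fint_{B_\rho}|U_{\gamma_0+1}(\nabla u)-[U_{\gamma_0+1}(\nabla u)]_{B_\rho}|$ rather than your $E$. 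Since only the one-sided inequality $|U_{\gamma_0+1}(\xi)-U_{\gamma_0+1}(\eta)|\le C|\xi-\eta|^{\gamma_0}$ holds, the two excesses are not interchangeable, and your version would require its own (nontrivial) proof.
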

                                          
                                         The proof of Theorem \ref{grad1} is based on a new comparison estimate obtained in our recent work \cite{QH4} (see Lemma \ref{111120149} below), and the following  sharp quantitative $C^{1,\sigma}$ regularity estimate for the associated homogeneous equation which is interesting in its own right.
                                          
                                          \begin{theorem}\label{hol-Du} Suppose  that $A_0=A_0(\xi)$ is a vector field independent of $x$ and satisfies conditions \eqref{condi1}-\eqref{condi3}
                                          for some $s\geq 0$, $\Lambda\geq 1$, $1<p<2$ and $\alpha\in (0,2-p)$.	 Given any  $q\in (1, p+1)$, we define a vector field
                                          	$$U_q(\xi):= (s^2+ |\xi|^2)^{\frac{q-2}{2}} \xi, \qquad \xi\in\mathbb{R}^n.$$
                                          	Let $v\in W^{1,p}_{\rm loc}(\Omega)$ be a solution of ${\rm div}\, A_0(\nabla v)=0$ in $\Omega$.                                          	
                                          	Then there exist constants $C>1$ and $\sigma\in(0,1)$, only depending on $n,p,\alpha,\Lambda$, such that
                                          	\begin{align}\label{HU}
                                          	\fint_{B_{\rho}(x_0)} & |U_q(\nabla v) - [U_q(\nabla v)]_{B_{\rho}(x_0)}| \nonumber\\
                                          	& \leq C \left(\frac{\rho}{R}\right)^{\sigma(q-1)} \fint_{B_{R}(x_0)} |U_q(\nabla v) - [U_q(\nabla v)]_{B_{R}(x_0)}|
                                          	\end{align}
                                          	for every $B_R(x_0)\subset\Omega$ and $\rho<R$.
                                          \end{theorem}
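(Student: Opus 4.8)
The plan is to prove \eqref{HU} as a decay estimate for the single scalar quantity $E(\rho):=\fint_{B_\rho(x_0)}|U_q(\nabla v)-[U_q(\nabla v)]_{B_\rho(x_0)}|$: I would show a geometric decay $E(\tau\rho)\le\theta_0\,E(\rho)$ for all $\rho<R$, with fixed $\tau,\theta_0\in(0,1)$ depending only on $n,p,\alpha,\Lambda$, and then conclude by the usual dyadic iteration lemma (using the elementary quasi-monotonicity $E(\rho)\le C(n)\,E(\rho')$ valid for $\rho'/2\le\rho\le\rho'$) with $\sigma$ chosen so small that $\sigma(q-1)\le\log_{1/\tau}(1/\theta_0)$ for every $q\in(1,p+1)$; this last choice is legitimate because $q-1<p$ is a fixed bound, so $\sigma$ ends up depending only on $n,p,\alpha,\Lambda$. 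As a preliminary reduction, by a routine regularization (replacing $s$ by $s_\varepsilon>0$ and $A_0$ by a smooth $A_0^\varepsilon$ satisfying \eqref{condi1}--\eqref{condi3} with uniform constants) together with $C^1_{\rm loc}$-stability of solutions, it suffices to prove the estimate with constants independent of these parameters, so we may take $v$ smooth; and I would record the classical interior estimates for the homogeneous singular equation (DiBenedetto, Tolksdorf, Lieberman): the reverse-Hölder gradient bound $\sup_{B_{r/2}}(|\nabla v|+s)\le C(\fint_{B_r}(|\nabla v|+s)^t)^{1/t}$ for every $t>0$, and $\nabla v\in C^{0,\beta}_{\rm loc}$ with $\operatorname{osc}_{B_\rho}\nabla v\le C(\rho/r)^{\beta}\sup_{B_r}(|\nabla v|+s)$ for some $\beta=\beta(n,p,\Lambda)\in(0,1)$, all with $s$-independent constants, together with the elementary bounds $|U_q(\xi)-U_q(\eta)|\le C(p)\,|\xi-\eta|^{q-1}$ for $1<q\le2$, $|U_q(\xi)-U_q(\eta)|\le C(p)(s+|\xi|+|\eta|)^{q-2}|\xi-\eta|$ for $2\le q<p+1$, and the matching lower bound $|U_q(\xi)-U_q(\eta)|\ge c(q)(s^2+|\xi|^2+|\eta|^2)^{(q-2)/2}|\xi-\eta|$.

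The decay is obtained through a degenerate/non-degenerate alternative at each scale. Fix a small $\nu=\nu(n,p,\alpha,\Lambda)$ and call $B_\rho=B_\rho(x_0)$ \emph{non-degenerate} if $\operatorname{osc}_{B_\rho}\nabla v\le\nu\sup_{B_\rho}(|\nabla v|+s)$ and \emph{degenerate} otherwise. On a non-degenerate ball one has $c_\nu A\le|\nabla v|+s\le A$ throughout $B_\rho$, where $A:=\sup_{B_\rho}(|\nabla v|+s)$, so $(s^2+|\nabla v|^2)^{(p-2)/2}$ is comparable to $A^{p-2}$ there; hence $v$ solves a genuinely uniformly elliptic equation on $B_\rho$ (ellipticity ratio $\lesssim\Lambda^2$, independent of $s$) whose coefficients, by \eqref{condi3} and $\nabla v\in C^{0,\beta}$, have $x$-modulus of continuity controlled by a fixed power of the radius. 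Freezing the coefficients and invoking linear Schauder theory gives an excess decay $\fint_{B_{\tau\rho}}|\nabla v-[\nabla v]_{B_{\tau\rho}}|\le C\tau^{\sigma_1}\fint_{B_\rho}|\nabla v-[\nabla v]_{B_\rho}|$ for some fixed $\sigma_1\in(0,1)$; since on $\{c_\nu A\le|\xi|+s\le A\}$ the map $U_q$ is a bi-Lipschitz chart with $|DU_q|\approx A^{q-2}$ in both directions, this transfers to $E(\tau\rho)\le C\tau^{\sigma_1}E(\rho)$, and I fix $\tau$ small enough that $C\tau^{\sigma_1}\le 1/2=:\theta_0$.

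The heart of the argument — and the step I expect to be the main obstacle — is the degenerate regime, where the singular character $1<p<2$ really bites: near points with $\nabla v$ small the operator $A_0$ is singular, its linearization has unbounded coefficients, and there is no admissible freezing. The plan here is: first use the reverse-Hölder bound to convert the degenerate condition into the information $\sup_{B_{\rho/2}}(|\nabla v|+s)\le C\,E(\rho)^{1/(q-1)}$; then, after rescaling so that this supremum becomes $\approx 1$ (which turns $A_0$ into a structurally identical field with a smaller value of $s$), run a De Giorgi-type measure-shrinking / compactness argument performed \emph{directly on the super-level sets of $U_q(\nabla v)$}, fuelled by a Caccioppoli-type energy inequality for $U_q(\nabla v)$ obtained by testing the differentiated equation against a suitable power of $(s^2+|\nabla v|^2)$ times $\partial_k v$ — this is exactly the place where the hypothesis $q<p+1$ enters, since it is what keeps that test function admissible and the resulting energy finite. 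The output is again $E(\tau\rho)\le\theta_0\,E(\rho)$, with $\tau$ as above (shrunk further if necessary). The delicate point here will be \emph{uniformity in $q$}: any bound obtained by first estimating $\operatorname{osc}\nabla v$ and only then composing with $U_q$ degrades as $q\downarrow1$ — at $q=1$ one has $U_1(\nabla v)=\nabla v/|\nabla v|$, which need not even be continuous where $\nabla v$ vanishes — so the whole degenerate analysis must be carried out intrinsically for $U_q(\nabla v)$ in order to keep $\tau,\theta_0$, and therefore $C$ and $\sigma$, independent of $q\in(1,p+1)$.

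Combining the two cases gives $E(\tau\rho)\le\theta_0\,E(\rho)$ for every $\rho<R$; iterating and filling in intermediate radii (using the quasi-monotonicity of $E$) yields $E(\rho)\le C\,(\rho/R)^{\log_{1/\tau}(1/\theta_0)}E(R)\le C\,(\rho/R)^{\sigma(q-1)}E(R)$ by the choice of $\sigma$, and finally passing to the limit removes the regularization, completing the proof of \eqref{HU}.
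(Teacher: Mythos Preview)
Your overall architecture (a degenerate/non-degenerate alternative iterated dyadically) is sound, and your non-degenerate branch is essentially correct: once $|\nabla v|+s$ is trapped between $c_\nu A$ and $A$ on $B_\rho$, the equation is uniformly elliptic with ratio depending only on $\Lambda$, so Schauder-type linear excess decay for $\nabla v$ transfers to $U_q(\nabla v)$ via the bi-Lipschitz chart, with constants independent of $q$.

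The gap is in the degenerate branch. You correctly observe that any argument which first controls $\operatorname{osc}\nabla v$ and only afterwards composes with $U_q$ produces a one-step factor $C\tau^{\beta(q-1)}$, which is \emph{not} below a fixed $\theta_0<1$ uniformly in $q$ (it tends to $C$ as $q\downarrow1$); hence the iteration cannot be launched with $q$-independent $\tau,\theta_0$ by that route. Your proposed cure --- ``a De Giorgi-type measure-shrinking/compactness argument on the super-level sets of $U_q(\nabla v)$, fuelled by a Caccioppoli inequality obtained by testing the differentiated equation against $(s^2+|\nabla v|^2)^\gamma\partial_k v$'' --- is not a recognizable black box. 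De Giorgi level-set methods apply to \emph{scalar} sub/supersolutions; $U_q(\nabla v)$ is vector-valued, and its modulus $|U_q(\nabla v)|=(s^2+|\nabla v|^2)^{(q-1)/2}$ has the same super-level sets as $|\nabla v|$, so any such argument collapses back to the DiBenedetto analysis of $|\nabla v|$ and hence to the $q$-dependent route you are trying to avoid. You have identified the obstacle but not supplied a mechanism that overcomes it.

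The paper's proof is organized differently and sidesteps this difficulty. It first proves a decay estimate for the \emph{natural} excess
\[
\Phi(x_0,r)=\fint_{B_r}|V(\nabla v)-[V(\nabla v)]_{B_r}|^2,\qquad V=U_p,
\]
via the Giaquinta--Modica alternative (Lemmas~\ref{PhiandH}--\ref{leau} and Theorem~\ref{calphaforphi}). The key input replacing your degenerate step is the inequality $\Phi(x_0,R/2)\le c\big(M(R)-M(R/2)\big)$ with $M(r)=\sup_{B_r}H(\nabla v)$ (Lemma~\ref{PhiandH}), which follows because $H(\nabla v)$ is a \emph{scalar subsolution} of a uniformly elliptic equation, so the weak Harnack inequality applies to $M(R)-H(\nabla v)$. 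This is exactly the structural fact that makes the alternative close; no vector-valued De Giorgi is needed. Only after obtaining the $q$-independent decay $\Phi(x_0,\rho)\le C(\rho/R)^{2\sigma_1}\Phi(x_0,R)$ does the paper transfer to $U_q$, and it does so not by composition but through the $N$-function
\[
g_a(t)=(s^{2(q-1)}+a^2+t^2)^{\frac{p}{2(q-1)}-1}t^2,
\]
which satisfies $|V(\xi)-V(\eta)|^2\simeq g_{|\eta|^{q-1}}(|U_q(\xi)-U_q(\eta)|)$ together with a shift estimate $g_{|z_0|^{q-1}}(t)\le Cg_{|z|^{q-1}}(t)+C|V(z_0)-V(z)|^2$. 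Telescoping $[U_q(\nabla v)]_{B_{R_k}}$ over dyadic scales and inverting $g$ then yields the $U_q$-excess decay with exponent $2\sigma_1/\kappa$, $\kappa=\max\{p/(q-1),2\}$; since $2\sigma_1/\kappa\ge (2\sigma_1/p)(q-1)$, one obtains the stated form $(\rho/R)^{\sigma(q-1)}$ with $\sigma$ and the multiplicative constant depending only on $n,p,\alpha,\Lambda$. The uniformity in $q$ thus comes for free from the $N$-function calculus, not from an intrinsic analysis of $U_q(\nabla v)$ at the PDE level.
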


We notice that Theorem \ref{hol-Du} generalizes the result \cite{Diening} in which  the case $q=p$ was considered in a slightly different context.  In our proof of Theorem  \ref{grad1}, Theorem   \ref{hol-Du} will be used with $q=1+\gamma_0$, where $\gamma_0\in \left(\frac{n}{2n-1}, \frac{n(p-1)}{n-1} \right)$. 
We remark that Theorem \ref{hol-Du} also holds in the case $p> 2$ provided the condition \eqref{condi3} is replaced by the condition
$$
 |D_\xi A(x,\xi)-D_\xi A(x,\eta)|\leq   \Lambda (s^2+|\xi|^2+|\eta|^2)^{(p-2-\alpha)/2}|\xi-\eta|^\alpha 
$$
for some $\alpha\in(0,p-2)$. For $p>2$, see also \cite[Theorem 3.1]{Duzamin2} where the case $q=\frac{p+2}{2}$ is considered.

The condition $u\in C^1(\Omega)$ in Theorem \ref{grad1} is by no means essential. In fact, it is enough to assume $u\in W^{1,p}_{\rm loc}(\Omega)$ in which case the pointwise bound \eqref{DMK} holds for any Lebesgue point $x$ of the vector function $(s^2+|\nabla u|^2)^{\frac{\gamma_0-1}{2}}\nabla u$. 
Moreover, by approximation the pointwise bound \eqref{DMK} also holds a.e. for any distributional  solution $u$ to the Dirichlet problem 
\begin{equation}\label{Dirich}
\left\{ \begin{array}{rcl}
- \operatorname{div}\left( {A(x,\nabla u)} \right) &=& \mu \quad \text{in} \ \Omega, \\ 
u &=& 0 \quad \text{on} \ \partial \Omega,  
\end{array} \right.
\end{equation}
provided $u$ satisfies the following additional properties:

\noindent ({\bf P1}). For each $k>0$ the truncation $T_k(u)$ belongs to $W_0^{1,p}(\Omega)$, where we define
$$T_k(s)=\max\{\min\{s,k\},-k\}, \qquad  s\in\mathbb{R}.$$

\noindent ({\bf P2}). For each $k>0$ there exsits a finite signed measure $\mu_k$ in $\Omega$ such that 
$$- \operatorname{div}\left( {A(x,\nabla T_{k}(u))} \right) = \mu_k \quad  \text{in} \  \mathcal{D}'(\Omega),$$
and if we set $|\mu_k|(\mathbb{R}^n\setminus\Omega)=|\mu|(\mathbb{R}^n\setminus\Omega)=0$ then it holds that 
$\mu_k \rightarrow \mu$ and $|\mu_k| \rightarrow |\mu|$ weakly as measures in $\mathbb{R}^n$.

We recall that if $u$ is a measurable function  in $\Omega$, finite a.e., and satisfying the above two conditions then there exists (see \cite[Lemma 2.1]{bebo}) a unique measurable function $v:\Omega\to \mathbb{R}^n$ such that $\nabla T_k(u)=v\, \chi_{\{|u|\leq k\}}$ 
 a.e. in $\Omega$  for each $k>0$. We define the gradient $\nabla u$ of $u$ by $\nabla u=v$ and accordingly  $\nabla u$  in \eqref{Dirich} should be understood in this sense. Note that 
 if $v$ belongs to $L^q(\Omega)^n$, $1\leq q\leq p$, then $u\in W^{1,q}_{0}(\Omega)$ and $v$ coincides with the distributional gradient of $u$ (see \cite[Remark 2.10]{11DMOP}).
We mention that if, e.g., $u$ is a {\it renormalized solution} to \eqref{Dirich} (see \cite{11DMOP}) then $u$ satisfies the above two properties.

In fact, for solutions $u$ of \eqref{Dirich} satisfying ({\bf P1}) and ({\bf P2})  we can obtain pointwise a.e. estimates up to the boundary of $\Omega$ provided  $\partial\Omega$
is sufficiently flat (in the sense of Reifenberg).

\begin{definition}We say that $\Omega$ is a $(\delta,R_0)$-Reifenberg flat domain for $\delta\in (0,1)$ and $R_0>0$ if for every $x\in\partial \Omega$ and every $r\in(0,R_0]$, there exists a system of coordinates $\{z_1,z_2,...,z_n\}$, which may depend on $r$ and $x$, so that in this coordinate system $x=0$ and that 
$$
	B_r(0)\cap \{z_n>\delta r\}\subset B_r(0)\cap \Omega\subset B_r(0)\cap\{z_n>-\delta r\}.
$$
\end{definition}

We notice that this class of  domains is rather wide since it includes $C^1$ domains, Lipschitz domains with sufficiently small Lipschitz constants, and even certain fractal domains. Besides, it has many important roles in the theory of minimal surfaces and free boundary problems. This class appeared first in the work of Reifenberg \cite{55Re} in the context of Plateau problems. Many of the properties of Reifenberg flat domains can be found in \cite{55KeTo1,55KeTo2}. 

Our pointwise  estimates up to the boundary of $\Omega$ read as follows.
                     
                \begin{theorem} \label{boundary}  Let $\frac{3n-2}{2n-1}<p\leq 2-\frac{1}{n}$ and suppose that $u$ is a solution of \eqref{Dirich} that satisfies properties $({\bf P1})$ and $({\bf P2})$.
                 Then under \eqref{condi1}-\eqref{dini} for any $\kappa\in (0,1/2)$, there exits $\delta>0$ 
                  such that if  $\Omega$ is a $(\delta,R_0)$-Reifenberg flat domain for some $R_0>0$ then    we have 
                                         \begin{align}\label{ine3}
                                        |\nabla u(x)|\leq C d(x)^{-\kappa} \left(\Big[{\bf P}^{2{\rm diam}(\Omega)}_{\gamma_0}(|\mu|)(x)\Big]^{\frac{1}{\gamma_0 (p-1)}}+s\right)
                                        \end{align}
                                        for a.e. $x\in \Omega$.  Here $\gamma_0$ is any number  in $\left(\frac{n}{2n-1}, \frac{n(p-1)}{n-1} \right)$ and $d(x)$ is the distance from 
                                        $x$ to the boundary of $\Omega$. 
                                      \end{theorem}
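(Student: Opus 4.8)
The plan is to combine the interior pointwise bound of Theorem \ref{grad1} with a \emph{boundary} analogue of the excess--decay scheme behind it, the interior regularity input of Theorem \ref{hol-Du} being replaced, in balls that reach $\partial\Omega$, by a quantitative boundary regularity estimate for the frozen homogeneous equation on domains $\delta$--close to a half-space; the loss $d(x)^{-\kappa}$ in \eqref{ine3} is precisely the price paid for that boundary regularity when $\delta>0$. We may assume $|\mu|(\mathbb{R}^n\setminus\Omega)=0$ and, by the approximation remarks following Theorem \ref{grad1}, that the a.e.\ form of \eqref{Wpointwise} is available. Fix $x\in\Omega$ where it holds and put $d=d(x)$. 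Applying Theorem \ref{grad1} on $B_d(x)\subset\Omega$, bounding the Wolff potential there by ${\bf P}^{2\,{\rm diam}(\Omega)}_{\gamma_0}(|\mu|)(x)$, and using $d\le{\rm diam}(\Omega)$, the estimate \eqref{ine3} reduces to proving
\begin{equation}\label{goalB}
\fint_{B_d(x)}(|\nabla u|+s)^{\gamma_0}\le C\,d^{-\gamma_0\kappa}\Big(\big[{\bf P}^{2\,{\rm diam}(\Omega)}_{\gamma_0}(|\mu|)(x)\big]^{\frac{1}{p-1}}+s^{\gamma_0}\Big).
\end{equation}
When $d\simeq{\rm diam}(\Omega)$ this is immediate from the global Marcinkiewicz a priori estimate $\|\nabla u\|_{L^{\gamma_0}(\Omega)}^{\gamma_0}\le C(|\mu|(\Omega)^{\gamma_0/(p-1)}+s^{\gamma_0})$ for measure--data solutions (valid since $\gamma_0<\frac{n(p-1)}{n-1}$; cf.\ \cite{11DMOP}) together with the lower bound $[{\bf P}^{2\,{\rm diam}(\Omega)}_{\gamma_0}(|\mu|)(x)]^{1/(p-1)}\gtrsim|\mu|(\Omega)^{\gamma_0/(p-1)}$, which holds because $B_{2\,{\rm diam}(\Omega)}(x)\supset\Omega$. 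So henceforth $d\ll{\rm diam}(\Omega)$: I pick $x_0\in\partial\Omega$ with $|x-x_0|=d$, fix $r\simeq\min\{R_0,{\rm diam}(\Omega)\}$, set $\Omega_\rho:=\Omega\cap B_\rho(x_0)$, and extend $u$ by zero outside $\Omega$.

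The heart of the argument is a boundary comparison scheme run along the dyadic scales $\rho_j=2^{-j}r$, $2d\le\rho_j\le r$. At a scale $\rho=\rho_j$, let $w$ solve ${\rm div}\,A(x,\nabla w)=0$ in $\Omega_{2\rho}$ with $w=u$ on $\partial\Omega_{2\rho}$ (so $w=0$ on $\partial\Omega\cap\overline{B_{2\rho}(x_0)}$); then, after flattening $\Omega_{2\rho}$ by the Reifenberg coordinates at $(x_0,2\rho)$, let $v$ solve the frozen flattened problem ${\rm div}\,A_0(\nabla v)=0$, $A_0:=A(x_0,\cdot)$, in (a ball inside) $\{z_n>0\}$ with $v=w$ on the corresponding boundary. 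The boundary version of the comparison estimate of \cite{QH4} bounds $\fint_{\Omega_\rho}|\nabla u-\nabla w|^{\gamma_0}$ by a power of $\tfrac{|\mu|(\overline{B_{2\rho}(x_0)})}{\rho^{n-1}}$, and the passage from $w$ to $v$ is controlled, via \eqref{Dini}, the $\delta$--flatness, and an energy comparison, by $\big(\omega(2\rho)^{\gamma_0}+\delta^{\gamma_0 t_*}\big)\fint_{\Omega_{2\rho}}(|\nabla u|+s)^{\gamma_0}$ for a structural exponent $t_*>0$. The crucial ingredient is the boundary regularity of $v$: by comparison with the linear solution $z_n$ on $\{z_n>0\}$ and a Reifenberg perturbation, $v$ vanishing on the flat face is H\"older continuous up to that face with exponent $1-\kappa_0$, where $\kappa_0=\kappa_0(\delta)\to0$ as $\delta\to0$; combining this decay with the interior $C^{1,\sigma}$ regularity for $v$ (Theorem \ref{hol-Du}) gives, on $\Omega_{\rho/2}$,
\begin{equation*}
\Big(\frac{{\rm dist}(\cdot,\partial\Omega)}{\rho}\Big)^{\kappa_0}|\nabla v|\le C\Big(\fint_{\Omega_{2\rho}}(|\nabla v|+s)^{\gamma_0}\Big)^{1/\gamma_0}.
\end{equation*}
Integrating over $\Omega_{\lambda\rho}$ for a fixed dyadic $\lambda\in(0,\tfrac18)$ and inserting the two comparison bounds yields the controlled--growth inequality
\begin{equation*}
\fint_{\Omega_{\lambda\rho}}(|\nabla u|+s)^{\gamma_0}\le C\lambda^{-\gamma_0\kappa_0}\big(1+\omega(2\rho)^{\gamma_0}+\delta^{\gamma_0 t_*}\big)\fint_{\Omega_{2\rho}}(|\nabla u|+s)^{\gamma_0}+C_\lambda\Big(\tfrac{|\mu|(\overline{B_{2\rho}(x_0)})}{\rho^{n-1}}\Big)^{\frac{\gamma_0}{p-1}}.
\end{equation*}

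I would then iterate this from $\rho=r$ down to the smallest dyadic $\rho_N\simeq d$ (so $N\simeq\log(r/d)$) and unwind the recursion: the prefactors $\lambda^{-\gamma_0\kappa_0}(1+\omega(2\rho_j)^{\gamma_0}+\delta^{\gamma_0 t_*})$ accumulate to at most $C(r/d)^{\gamma_0(\kappa_0+C'\delta^{\gamma_0 t_*})}$ (the Dini factors contributing only a bounded multiple by \eqref{dini} with the admissible exponent $\gamma_0$), while the measure terms sum --- using $\tfrac{1}{p-1}\ge1$ and $\rho_j\le r$ to pass from $\ell^{1/(p-1)}$ to $\ell^{1}$ summation of $(|\mu|(\overline{B_{\rho_j}(x_0)})/\rho_j^{n-1})^{\gamma_0}$ --- to a multiple of $[{\bf P}^{2r}_{\gamma_0}(|\mu|)(x_0)]^{1/(p-1)}$. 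Since $B_d(x)\subset\Omega_{\rho_N}$ with $\rho_N\simeq d$, this gives
\begin{equation*}
\fint_{B_d(x)}(|\nabla u|+s)^{\gamma_0}\le C\,d^{-\gamma_0(\kappa_0+C'\delta^{\gamma_0 t_*})}\Big(\big[{\bf P}^{2r}_{\gamma_0}(|\mu|)(x_0)\big]^{\frac{1}{p-1}}+\fint_{\Omega_r}(|\nabla u|+s)^{\gamma_0}\Big).
\end{equation*}
Choosing $\delta=\delta(\kappa)$ small enough (also small enough for the Reifenberg flattening to be legitimate) that $\kappa_0+C'\delta^{\gamma_0 t_*}\le\kappa$, comparing the Wolff potentials at the nearby points $x_0$ and $x$, and bounding $\fint_{\Omega_r}(|\nabla u|+s)^{\gamma_0}$ by the global Marcinkiewicz estimate as in the deep--interior case, we arrive at \eqref{goalB} and hence at \eqref{ine3}.

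The hard part will be the quantitative boundary regularity of $v$: proving, in the genuinely singular range $1<p<2$, that a homogeneous $A_0$--solution with zero data on a $\delta$--flat boundary piece decays like ${\rm dist}^{\,1-\kappa_0}$ with $\kappa_0(\delta)\to0$, and tracking this exponent cleanly through the $U_q$/energy comparison so that it enters the iteration as the power $\gamma_0\kappa_0$. A secondary but delicate point is fixing the parameters in the right order --- first $\kappa$, then $\delta$, then the fixed dyadic $\lambda$, then the number $N$ of steps --- and checking that the three error series (measure, Dini, flatness) converge with constants depending only on $n,p,\alpha,\Lambda,D,\gamma_0$ and, for the final $C$ in \eqref{ine3}, also on $R_0$ and ${\rm diam}(\Omega)$.
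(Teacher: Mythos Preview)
Your overall strategy coincides with the paper's: apply Theorem~\ref{grad1} on $B_{d(x)}(x)$ to reduce \eqref{ine3} to the boundary--average bound \eqref{goalB}, dispose of the case $d(x)\simeq{\rm diam}(\Omega)$ by the global $L^{\gamma_0}$ a priori estimate, and for small $d(x)$ control the average at a nearest boundary point by iterating a two--step comparison $u\to w\to v$ (Lemma~\ref{111120149+} and the frozen--coefficient estimate~\eqref{es-1}) across shrinking boundary scales. Where you depart from the paper is in the \emph{source} of the boundary decay for $v$: you propose to establish a pointwise weighted bound $(\mathrm{dist}/\rho)^{\kappa_0}|\nabla v|\le C(\cdots)$ by first proving boundary $C^{0,1-\kappa_0}$ regularity of $v$ via a barrier/flattening argument and then combining with interior $C^{1}$. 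The paper bypasses this entirely: it shows (Lemma~\ref{rei-es}, via a good--$\lambda$ argument on Reifenberg flat domains drawing on \cite{QH4,55Ph2}) that for every $q<\infty$ one can choose $\delta=\delta(q)$ so that $\nabla v\in L^q$ with $\big(\fint_{B_{r/800}}|\nabla v|^q\big)^{1/q}\le C\big(\fint_{B_r}(|\nabla v|+s)^{\gamma_0}\big)^{1/\gamma_0}$, whence H\"older's inequality alone gives $\int_{B_{\varepsilon r}}|\nabla v|^{\gamma_0}\le C\varepsilon^{\,n-\gamma_0 n/q}\int_{B_r}(|\nabla v|+s)^{\gamma_0}$; taking $q\ge 2n/\kappa$ yields the exponent $n-\gamma_0\kappa/2$ directly. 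The paper then feeds the resulting one--scale inequality for $\phi(r)=\int_{B_r(x_1)}|\nabla u|^{\gamma_0}$ into the standard iteration lemma (Lemma~\ref{lem00}) rather than iterating by hand, which also absorbs the Dini error $\omega(r)^{\gamma_0}$ and an auxiliary Young parameter in one stroke. Your route is sound, but the ``hard part'' you correctly flag --- a quantitative $C^{0,1-\kappa_0}$ boundary estimate in the singular range with $\kappa_0(\delta)\to 0$, tracked through a flattening --- is exactly what the $L^q$/good--$\lambda$ machinery avoids; it also makes your running flatness error $\delta^{\gamma_0 t_*}$ in the iteration unnecessary, since in the paper $\delta$ enters only through the \emph{existence} of the $L^q$ bound, not as a term to be summed.
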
    
              
          We notice that due to the potential irregularity of $\Omega$, it is not possible to take $\kappa=0$ in  \eqref{ine3} in general.

\section{Sharp quantitative $C^{1,\sigma}$ regularity estimates}  

This section is devoted to the proof of Theorem \ref{hol-Du}. We first recall the following   basic inequalities that were proved in \cite[Lemmas 2.1 and 2.2]{AF}:
\begin{equation}\label{ieq1}
1\leq \frac{\int_{0}^{1}(s^2+|\xi_1+t(\xi_2-\xi_1)|^2)^\gamma dt}{(s^2+|\xi_1|^2+|\xi_2|^2)^\gamma}\leq \frac{8}{2\gamma+1},
\end{equation}
and
\begin{equation}\label{ieq2}
(2\gamma+1) |\xi_1-\xi_2| \leq \frac{|(s^2+|\xi_1|^2)^\gamma\xi_1-(s^2+|\xi_2|^2)^\gamma\xi_2|}{(s^2+|\xi_1|^2+|\xi_2|^2)^\gamma}\leq \frac{C(n)}{2\gamma+1} |\xi_1-\xi_2|,
\end{equation}
which hold for any $\xi_1,\xi_2\in \mathbb{R}^n$, $s\geq 0$, and  $\gamma\in (-1/2,0)$.

For $s\geq  0$, we let  
$$Z(\xi)= (s^2 +|\xi|^2)^{\frac{1}{2}}, \qquad \xi\in \mathbb{R}^n,$$
and define 
$$H(\xi)=Z(\xi)^p, \qquad  V(\xi)= Z(\xi)^{(p-2)/2} \xi, \qquad \xi\in \mathbb{R}^n.$$

Then the conditions \eqref{condi1}-\eqref{condi3} imposed on $A_0$ in Theorem \ref{hol-Du} can be restated as 
\begin{align}\label{A01}
A_0(\xi)|\le \Lambda Z(\xi)^{p-1}, \quad | D A_0(\xi)|\le \Lambda Z(\xi)^{p-2},
\end{align}
\begin{align}\label{A02}
\langle D A_0(\xi)\eta,\eta\rangle\geq \Lambda^{-1}  Z(\xi)^{p-2} |\eta|^2,
\end{align}
and 
\begin{equation}\label{A03}
|D A_0(\xi)-D A_0(\eta)|\leq \Lambda\, Z(\xi)^{p-2} Z(\eta)^{p-2} (s^2+|\xi|^2+|\eta|^2)^{(2-p-\alpha)/2}|\xi-\eta|^\alpha
\end{equation}
for some $\Lambda\geq 1$, $\alpha\in(0, 2-p)$, and for every  $(\xi,\eta)\in \mathbb{R}^n\times \mathbb{R}^n\backslash\{(0,0)\}$.

It follows from \eqref{A02} that the following strict monotonicity holds
\begin{equation}\label{esz}(A_0(\xi)-A_0(\eta))\cdot(\xi-\eta)\geq c(p,\Lambda) (s^2 + |\xi|^2 +|\eta|^2)^{\frac{p-2}{2}}|\xi-\eta|^2 
\end{equation}
for all $(\xi,\eta)\in \mathbb{R}^n\times \mathbb{R}^n$. 
Moreover, by the second inequality in \eqref{A01} we have 
\begin{align*}
|A_0(\xi)-A_0(\eta)|&=\left|\int_0^1 D A_0(t\xi +(1-t)\eta) (\xi-\eta) dt\right|\\
&\leq \Lambda |\xi-\eta| \int_0^1 Z(t\xi +(1-t)\eta)^{p-2}  dt\\
& \leq C |\xi-\eta| (s^2 + |\eta|^2 + |\xi-\eta|^2)^{\frac{p-2}{2}},
\end{align*}
where we used \eqref{ieq1} in the least inequality. 	Thus we get 
\begin{equation}\label{Aandxi}
(A_0(\xi)-A_0(\eta))\cdot(\xi-\eta)\simeq (s^2 + |\xi|^2 +|\eta|^2)^{\frac{p-2}{2}}|\xi-\eta|^2,
\end{equation}
and 
\begin{equation}\label{Aonly}
|A_0(\xi)-A_0(\eta)| \simeq (s^2 + |\xi|^2 +|\eta|^2)^{\frac{p-2}{2}}|\xi-\eta|.
\end{equation}

Let $v\in W^{1,p}_{\rm loc}(\Omega)$ be a solution of ${\rm div} \, A_0(\nabla v) = 0 \quad {\rm in}\quad   \Omega$, 
i.e.,
\begin{equation}\label{testf}  
\int_{\Omega} A_{0i}(\nabla v) D_i \phi=0,
\end{equation}
for every $\phi\in C_0^\infty(\Omega)$, where $A_{0i}$ is the $i^{\text{th}}$ component of $A_0$.  We observe that in order to prove  \eqref{HU} for $v$, by a standard approximation (see, e.g., \cite{Duzamin1}), we may assume that $s>0$. 

Then by \cite[Theorem 8.1]{Giu} and \cite[Proposition 8.1]{Giu}, $v$ has second derivatives $D^2 v$ and $H(\nabla v)\in W^{1,2}_{\rm loc}(\Omega)$, such that for every subset $\Sigma\Subset \Omega$, we have   
\begin{equation}\label{D2vbound}
\int_\Sigma Z(\nabla v)^{p-2} |D^2 v|^2 \leq C(\Sigma, s) \int_\Omega H(\nabla v), 
\end{equation}
and 
$$\int_\Sigma |\nabla [H(\nabla v)]|^2 \leq C(\Sigma, s) \int_\Omega |H(\nabla v)|^2. $$

In \eqref{testf}, taking $\phi=D_k \varphi$,  $\varphi\in C_0^\infty(\Omega)$, and integrating by parts,
we find  
\begin{equation}\label{Dsv-sol}
\int A_{ij}(\nabla v) D_{jk} vD_i \varphi=0,
\end{equation}
where we set
$$A_{ij}(\xi)=\frac{\partial A_{0i}(\xi)}{\partial \xi_j}, \quad i,j=1, \dots, n.$$

By \eqref{D2vbound}, for each $k\in \{1, \dots,n\}$, the function $\varphi (D_k v-b_k)$, $\varphi\in C_0^\infty(\Omega), b_k\in \mathbb{R}$, is a valid   test function for \eqref{Dsv-sol}, and thus we find
$$\int A_{ij}(\nabla v) D_{jk} v D_{ik} v \varphi + \int A_{ij}(\nabla v) D_{jk} v (D_{k}v -b_k)  D_i\varphi =0. $$

Now observe that $D_j [H(\nabla v)]= p Z(\nabla v)^{p-2} D_{jk}v D_k v$ and thus when $(b_1, \dots, b_n)=(0,\dots,0)$ the last equality can be written as 
$$p\int A_{ij}(\nabla v) D_{jk} v D_{ik} v \varphi + \int a_{ij}(\nabla v) D_{j} [H(\nabla v)]  D_i\varphi =0, $$
where $a_{ij}(\nabla v)= Z(\nabla v(x))^{2-p} A_{ij}(\nabla v(x))$, a uniformly elliptic matrix.

In view of \eqref{A02}, this gives
\begin{equation}\label{subsolforH}
\int a_{ij}(\nabla v) D_{j} [H(\nabla v)]  D_i\varphi  \leq -c \int Z(\nabla v)^{p-2} |D^2 v|^2 \varphi  \leq -c \int |D V(\nabla v)|^2 \varphi,
\end{equation}
for all $\varphi\in C_0^\infty(\Omega)$, $\varphi\geq 0$. 

In particular, $H(\nabla v) \in W^{1,2}_{\rm loc}(\Omega)$ is a subsolution to a uniformly elliptic equation 
in divergence form, which yields that $H(\nabla v) \in L^{\infty}_{\rm loc}(\Omega)$ with the estimate
\begin{equation}\label{Linf}
\sup_{B_{R/2}} H(\nabla v) \leq C \fint_{B_R} H(\nabla v), \qquad \forall B_R\subset\Omega.
\end{equation}

In what follows, for any ball $B_r(x_0)\subset\Omega$ we denote by $\Phi(x_0, r)$ the excess functional
 $$\Phi(x_0, r):=\fint_{B_{r}(x_0)} |V(\nabla v) - [V(\nabla v)]_{B_{r}(x_0)}|^2.$$
We also set 
$$M(r)=\sup_{B_r(x_0)} H(\nabla v).$$

With \eqref{subsolforH}, one can now argue as in the proof of \cite[Proposition 3.1]{GM} to obtain the following result. 
\begin{lemma}\label{PhiandH} There is a consntant $c>0$ independent of $s$ such that 
	\begin{equation}\label{z5}
	\Phi(x_0, R/2) \leq c \Big(M(R) - M(R/2)\Big),
	\end{equation}
	for every $B_R(x_0)\Subset\Omega$.
\end{lemma}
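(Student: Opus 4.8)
The plan is to reduce \eqref{z5} to the classical potential lower bound for a nonnegative supersolution in terms of the mass of its source, in the spirit of \cite[Proposition 3.1]{GM}. Two preliminary facts drive the reduction. First, by \eqref{ieq2} applied with $\gamma=(p-2)/4\in(-1/2,0)$ one has $|D_\xi V(\xi)|\leq C(n,p)\,Z(\xi)^{(p-2)/2}$, so that $|D[V(\nabla v)]|\leq C(n,p)\,Z(\nabla v)^{(p-2)/2}|D^2v|$ a.e.; together with \eqref{D2vbound} this gives $V(\nabla v)\in W^{1,2}_{\rm loc}(\Omega)$, and the Poincar\'e--Wirtinger inequality yields
$$\Phi(x_0,R/2)\ \leq\ C(n)\,R^{2}\fint_{B_{R/2}(x_0)}|D[V(\nabla v)]|^{2}\ \leq\ C(n,p)\,R^{2}\fint_{B_{R/2}(x_0)}Z(\nabla v)^{p-2}|D^2v|^2.$$
Second, set $w:=M(R)-H(\nabla v)$; then $w\geq0$ on $B_R(x_0)$ by definition of $M(R)$, $w\in W^{1,2}(B_R(x_0))$ since $B_R(x_0)\Subset\Omega$, and \eqref{subsolforH} says precisely that $w$ is a \emph{super}solution of the frozen operator $L:=-\operatorname{div}(a_{ij}(\nabla v)D_j\,\cdot\,)$ carrying a quantitative defect,
$$\int a_{ij}(\nabla v)\,D_j w\,D_i\varphi\ \geq\ c\int Z(\nabla v)^{p-2}|D^2v|^2\,\varphi\qquad\text{for all }\varphi\in C_0^\infty(B_R(x_0)),\ \varphi\geq0,$$
with $L$ uniformly elliptic, with constants depending only on $p,\Lambda$ and independently of $s$ (by \eqref{A01}--\eqref{A02}). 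Since $\inf_{B_{R/2}(x_0)}w=M(R)-M(R/2)$ straight from the definitions, it remains only to prove
$$\int_{B_{R/2}(x_0)}Z(\nabla v)^{p-2}|D^2v|^2\ \leq\ c\,R^{n-2}\,\inf_{B_{R/2}(x_0)}w .$$

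This last inequality is the classical potential lower bound, and I would get it by comparison on $B_R(x_0)$ itself. Since $w\in W^{1,2}(B_R(x_0))$ is $\geq0$ a.e., its boundary trace on $\partial B_R(x_0)$ is nonnegative; writing $w=w_0+g$ with $g(x):=c\int_{B_R(x_0)}G_{B_R(x_0)}(x,y)\,Z(\nabla v(y))^{p-2}|D^2v(y)|^2\,dy$ the Green potential on $B_R(x_0)$ of the source, the remainder $w_0=w-g$ is a nonnegative homogeneous supersolution of $L$ in $B_R(x_0)$ (its boundary trace equals that of $w$), so $w\geq g\geq0$. The two-sided Green's function bounds for divergence-form elliptic operators (Littman--Stampacchia--Weinberger, Gr\"uter--Widman) together with a Harnack chain give $G_{B_R(x_0)}(x,y)\geq c\,R^{2-n}$ for all $x,y\in B_{R/2}(x_0)$ (read $R^{2-n}$ as $1$ when $n=2$), with constants depending only on $n$ and the ellipticity ratio of $L$, hence only on $n,p,\Lambda$; therefore
$$\inf_{B_{R/2}(x_0)}w\ \geq\ \inf_{B_{R/2}(x_0)}g\ \geq\ c\,R^{2-n}\int_{B_{R/2}(x_0)}Z(\nabla v)^{p-2}|D^2v|^2.$$
Feeding this into the Poincar\'e estimate of the first paragraph and using $\inf_{B_{R/2}(x_0)}w=M(R)-M(R/2)$ gives \eqref{z5} with $c=c(n,p,\Lambda)$, in particular independent of $s$.

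The main work is this supersolution lower bound; once the Green's function and Harnack estimates for the frozen operator are in hand it is standard potential theory, and the point that all constants are $s$-free is automatic, since $a_{ij}(\nabla v)=Z(\nabla v)^{2-p}A_{ij}(\nabla v)$ is uniformly elliptic with $s$-independent bounds. Two technical wrinkles deserve a word: the source $Z(\nabla v)^{p-2}|D^2v|^2$ is a priori only $L^1$, so the Green potential $g$ and the comparison $w\geq g$ should be obtained by truncating the source at level $k$, solving $Lg_k=(\text{truncated source})$ in $W^{1,2}_0(B_R(x_0))$, and letting $k\to\infty$ monotonically; and the sign-correct gain $M(R)-M(R/2)$ --- rather than the full oscillation of $H(\nabla v)$ on $B_{R/2}(x_0)$ --- appears precisely because $M(R)-M(R/2)$ equals $\inf_{B_{R/2}(x_0)}\bigl(M(R)-H(\nabla v)\bigr)$, i.e.\ the infimum over $B_{R/2}(x_0)$ of the natural nonnegative supersolution attached to the subsolution $H(\nabla v)$.
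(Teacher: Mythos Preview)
Your argument is correct and follows the same overall strategy as the paper: reduce $\Phi(x_0,R/2)$ via Poincar\'e to $R^{2}\fint_{B_{R/2}}Z(\nabla v)^{p-2}|D^2v|^2$, observe that $w=M(R)-H(\nabla v)$ is a nonnegative supersolution of the uniformly elliptic operator $L=-\operatorname{div}(a_{ij}(\nabla v)D_j\,\cdot\,)$ carrying this quantity as source, and extract the lower bound $\inf_{B_{R/2}}w\gtrsim R^{2-n}\int_{B_{R/2}}Z(\nabla v)^{p-2}|D^2v|^2$. The difference lies in how that last bound is obtained. You invoke Green's function estimates (Littman--Stampacchia--Weinberger, Gr\"uter--Widman) directly: write $w\geq g$ with $g$ the Green potential of the source, then read off the bound from $G_{B_R}(x,y)\geq cR^{2-n}$ on $B_{R/2}\times B_{R/2}$. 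The paper instead constructs the torsion-type function $\chi\in W^{1,2}_0(B_R)$ solving $L\chi=R^{-2}$, tests \eqref{subsolforH} with $\varphi=\chi^2$ to obtain $\int_{B_R}|DV(\nabla v)|^2\chi^2\leq CR^{-2}\int_{B_R}w$, and closes using only the weak Harnack inequality (applied to $w$ to control $\fint_{B_R}w$ by $\inf_{B_{R/2}}w$, and to $\chi$ to get $\inf_{B_{R/2}}\chi\geq c$ and $\|\chi\|_{L^\infty}\leq C$). The two are essentially dual, since $\chi(x)=R^{-2}\int_{B_R}G(x,y)\,dy$ and the bounds on $\chi$ encode precisely the averaged Green's function estimates you quote; the paper's route is a bit more self-contained (weak Harnack only, no LSW/Gr\"uter--Widman package, and no need for the $L^1$-source truncation you correctly flag), while yours is conceptually more transparent. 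One verbal slip: your $w_0=w-g$ is merely a supersolution of $Lu=0$, not a solution, since one only knows $Lw\geq cf$; but the minimum principle for supersolutions with nonnegative boundary trace still yields $w_0\geq 0$, which is all you use.
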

\begin{proof}  By \eqref{subsolforH}, the function $\mathbf{v}(x):=M(R)-H(\nabla v(x))$ is a nonnegative supersolution in $B_{R}(x_0)$ of the uniformly elliptic equation $\partial_{i}(a_{ij}(\nabla v)\partial_{j} u)= 0$. Thus, by the weak Harnack inequality, we have 
	\begin{align}\label{Vest}
	\fint_{B_{R}(x_0)} \mathbf{v}(x)dx\leq C\inf_{B_{R/2}(x_0)} \mathbf{v}\leq C(M(R)-M(R/2)).
	\end{align}

	Let $\chi\in W^{1,2}_0(B_R(x_0))$ be the weak solution to  
	\begin{align*}
	\int_{B_R(x_0)} a_{ij}(\nabla v) \partial_{i} \chi   \partial_j\varphi =\frac{1}{R^2}\int_{B_R(x_0)}\varphi dx ~~\forall\varphi\in W^{1,2}_0(B_R(x_0)).
	\end{align*}
	Then taking $\varphi= \chi \mathbf{v}$ as test function for the above equation, we get 
	\begin{align*}
	\frac{1}{2}\int_{B_R(x_0)} a_{ij}(\nabla v) \partial_{i} \chi^2 \partial_j \mathbf{v}\leq \frac{1}{R^2}\int_{B_R(x_0)} \chi \mathbf{v} dx.
	\end{align*}
	Now, taking $\varphi=\chi^2$ as test function for \eqref{subsolforH}, we find
	\begin{align*}
	& \int_{B_{R}(x_0)} |\nabla V(\nabla v)|^2 \chi^2 \leq 	-C\int_{B_{R}(x_0)} a_{ij}(\nabla v) \partial_{j} [H(\nabla v)]  \partial_i\chi^2 \\
        &  =C\int_{B_{R}(x_0)} a_{ij}(\nabla v) \partial_{j} \textbf{v}  \partial_i\chi^2  \leq \frac{C}{R^2} \int_{B_R(x_0)} \chi \mathbf{v} dx  \leq \frac{C}{R^2} \int_{B_R(x_0)}\mathbf{v} dx,
	\end{align*}
where we used  the fact that $\|\chi\|_{L^\infty(B_{R}(x_0))}\leq C$ (by homogeneity) in the last inequality. Also, by homogeneity and  the weak Harnack inequality we  have that $\inf_{B_{R/2}(x_0)} \chi \geq c>0$ and thus  combining with \eqref{Vest} we obtain 

\begin{align*}
	& \int_{B_{R/2}(x_0)} |\nabla V(\nabla v)|^2 \leq \frac{C}{R^2}\left(M(R)-M(R/2)\right).
	\end{align*}

Finally, we use   Poincar\'e's inequality  in the last bound to obtain  \eqref{z5}. This completes the proof of the lemma.  
\end{proof}

The following lemma can be proved by adapting the proof of \cite[Lemma 2.9]{AF}  to our setting.
\begin{lemma}\label{ReverseforD}
	Let $B_R(x_0)\Subset \Omega$ and suppose  that	$\sup_{B_R(x_0)} |\nabla v|^2 \leq c (s^2+ |\xi|^2)$
	for some $c>0$ and $\xi\in\mathbb{R}^n$. Then there exist $C, \delta>0$ independent of $s$, $\xi$, and $B_R(x_0)$ such that
	\begin{equation}\label{ReverseforD-}\fint_{B_{R/2}(x_0)} |\nabla v-\xi|^{2+2\delta} \leq C \left(\fint_{B_R(x_0)} |\nabla v-\xi|^{2}\right)^{1+\delta}.
	\end{equation}
\end{lemma}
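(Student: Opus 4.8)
The plan is to use the smallness hypothesis to reduce the assertion to the classical Gehring higher integrability estimate for a uniformly elliptic equation whose constants are independent of $s$, and then to run the standard Caccioppoli--Sobolev--Gehring scheme. We may assume $(s,\xi)\neq(0,0)$, since otherwise the hypothesis forces $\nabla v\equiv0$ on $B_R(x_0)$ and \eqref{ReverseforD-} is trivial. Put $t:=(s^2+|\xi|^2)^{1/2}>0$ and consider the vector field $F:=t^{2-p}(A_0(\nabla v)-A_0(\xi))$. Since $A_0(\xi)$ and $t^{2-p}$ are constants, $F$ is divergence free in $B_R(x_0)$, i.e.\ $\int F\cdot\nabla\varphi=0$ for $\varphi\in C_0^\infty(B_R(x_0))$, and because $\nabla v$ (hence $F$) is bounded on $B_R(x_0)$ this extends to every $\varphi\in W^{1,p}_0(B')$ with $B'\Subset B_R(x_0)$. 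The point of the normalization is this: since in \eqref{Aandxi} and \eqref{Aonly} one of the two arguments is always $\xi$, the hypothesis $\sup_{B_R(x_0)}|\nabla v|^2\le c\,t^2$ gives $t^2\le s^2+|\nabla v(x)|^2+|\xi|^2\le(1+c)t^2$ for every $x\in B_R(x_0)$, so that \eqref{Aandxi} and \eqref{Aonly} become the pointwise bounds
\[
F\cdot(\nabla v-\xi)\simeq|\nabla v-\xi|^2,\qquad |F|\simeq|\nabla v-\xi|\qquad\text{on }B_R(x_0),
\]
with implicit constants depending only on $n,p,\Lambda$ and $c$. The factor $t^{2-p}$ is exactly what cancels the degenerate weight $(s^2+|\nabla v|^2+|\xi|^2)^{(p-2)/2}\simeq t^{p-2}$ and makes these comparisons $s$-uniform.

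Granting this, I would first prove a Caccioppoli inequality for $\nabla v-\xi$. For concentric balls $B_\rho(x_0)\subset B_r(x_0)\subset B_R(x_0)$, let $\ell$ be the affine function with $\nabla\ell=\xi$ normalized so that $v-\ell$ has mean zero on $B_r(x_0)$, take $\eta\in C_0^\infty(B_r(x_0))$ with $\eta\equiv1$ on $B_\rho(x_0)$ and $|\nabla\eta|\le2/(r-\rho)$, and use $\varphi=\eta^2(v-\ell)$ as a test function for ${\rm div}\,F=0$. Expanding and using the two comparisons above together with Young's inequality to absorb the term with $|\nabla v-\xi|\,\eta^2$ yields
\[
\int_{B_\rho(x_0)}|\nabla v-\xi|^2\le\frac{C}{(r-\rho)^2}\int_{B_r(x_0)}|v-\ell|^2,\qquad C=C(n,p,\Lambda,c).
\]
Applying this on a slightly enlarged ball and combining it with the Sobolev--Poincar\'e inequality $(\fint_{B_r(x_0)}|v-\ell|^2)^{1/2}\le Cr\,(\fint_{B_r(x_0)}|\nabla v-\xi|^{2_{*}})^{1/2_{*}}$, valid since $v-\ell$ has zero mean and $2_{*}:=\frac{2n}{n+2}<2$, produces a reverse H\"older inequality with a gap: $\fint_{B_r(y)}|\nabla v-\xi|^2\le C(\fint_{B_{2r}(y)}|\nabla v-\xi|^{2_{*}})^{2/2_{*}}$ for every $B_{2r}(y)\subset B_R(x_0)$. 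Since $\nabla v-\xi\in L^\infty(B_R(x_0))\subset L^2(B_R(x_0))$, Gehring's lemma then gives $\delta>0$ and $C>0$, depending only on $n,p,\Lambda,c$, with $\nabla v-\xi\in L^{2+2\delta}_{{\rm loc}}(B_R(x_0))$ and the quantitative estimate \eqref{ReverseforD-}; in particular these constants do not depend on $s$, $\xi$, or $B_R(x_0)$.

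The only genuinely delicate point --- and the place where the hypothesis is essential --- is the uniform comparability $F\cdot(\nabla v-\xi)\simeq|\nabla v-\xi|^2\simeq|F|\,|\nabla v-\xi|$ on $B_R(x_0)$. A priori the weight $(s^2+|\nabla v|^2)^{(p-2)/2}$ governing the ellipticity of $A_0$ blows up (for $1<p<2$) wherever $\nabla v$ is small, which would make the constants depend on $s$; this is repaired by centering the comparison at $\xi$ rather than at $0$, so that \eqref{Aandxi} and \eqref{Aonly} involve $s^2+|\nabla v|^2+|\xi|^2$ instead, a quantity the smallness hypothesis pins between $t^2$ and $(1+c)t^2$. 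Everything else is routine, and in particular no second-order information on $v$ is needed in this argument.
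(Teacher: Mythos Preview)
Your argument is correct and is essentially the paper's own proof: both test the relation $\int(A_0(\nabla v)-A_0(\xi))\cdot\nabla\varphi=0$ with $\varphi=\eta^2(v-\ell)$, use \eqref{Aandxi}--\eqref{Aonly} together with the hypothesis to replace the degenerate weight $(s^2+|\nabla v|^2+|\xi|^2)^{(p-2)/2}$ by the constant $(s^2+|\xi|^2)^{(p-2)/2}$, obtain the Caccioppoli inequality for $\nabla v-\xi$, and finish via Sobolev--Poincar\'e and Gehring. The only cosmetic difference is that you normalize by $t^{2-p}$ at the outset whereas the paper carries the weight through and cancels it at the end; note also that the Caccioppoli step must be carried out on every sub-ball $B_\rho(y)\subset B_r(y)\subset B_R(x_0)$ (not just those centered at $x_0$) so that Gehring's lemma applies, which your argument does cover since nothing in it uses the center.
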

\begin{proof} For $B_{\rho}(y_0)\subset B_{R}(x_0)$ we set 
	\begin{equation}\label{vtil}
	\tilde{v}=v(x)-[v]_{B_{\rho}(y_0)}-\xi\cdot(x-y_0),
	\end{equation} 
	and let $\varphi$ be a function in $C^\infty_c(B_\rho(y_0))$ such that  $0\leq \varphi\leq 1$, $\varphi=1$ in $B_{\rho/2}(y_0)$ and $|\nabla \varphi|\leq C/\rho$. Note that 
	\begin{equation*}
	\int \left(A_{0}(\nabla v)-A_{0}(\xi)\right) \cdot \nabla (	\tilde{v}\varphi^2)=	\int A_{0}(\nabla v) \cdot \nabla (	\tilde{v}\varphi^2)=0,
	\end{equation*}
	and thus  
	\begin{equation*}
	\int  \left(A_{0}(\nabla v)-A_{0}(\xi)\right) \cdot \nabla \tilde{v} \varphi^2=-	2\int \left(A_{0}(\nabla v)-A_{0}(\xi)\right) \cdot \nabla\varphi \tilde{v} \varphi.
	\end{equation*}
	
     Then by \eqref{esz} and \eqref{Aonly}, 
	\begin{equation*}
	\int (s^2 + |\nabla v|^2 +|\xi|^2)^{\frac{p-2}{2}}|\nabla \tilde{v}|^2\varphi^2\leq C	\int(s^2 + |\nabla v|^2 +|\xi|^2)^{\frac{p-2}{2}}|\nabla \tilde{v}||\nabla\varphi| |\tilde{v}||\varphi|, 
	\end{equation*}
 which by	 H\"older's inequality yields 
	\begin{equation*}
	\int (s^2 + |\nabla v|^2 +|\xi|^2)^{\frac{p-2}{2}}| \nabla \tilde{v}|^2\varphi^2\leq C	\int(s^2 + |\nabla v|^2 +|\xi|^2)^{\frac{p-2}{2}}|\nabla\varphi|^2 |\tilde{v}|^2.
	\end{equation*}

	Note that  $\sup_{B_R(x_0)} |\nabla v|^2 \leq c (s^2+ |\xi|^2)$ implies that 
	\begin{align*}
	(s^2 + |\nabla v|^2 +|\xi|^2)^{\frac{p-2}{2}}\simeq (s^2 +|\xi|^2)^{\frac{p-2}{2}},
	\end{align*}
	and thus using the property of $\varphi$ we find
	\begin{equation*}
	\int_{B_{\rho/2}(y_0)} |\nabla v-\xi|^2 dx \leq \frac{C}{\rho^2}	\int_{B_\rho(y_0)} |\tilde{v}|^2 dx .
	\end{equation*}

	Now using   Sobolev-Poincar\'e's inequality (note that $[\tilde{v}]_{B_\rho(y_0)}=0$) and Gehring lemma on higher integrability, we get \eqref{ReverseforD-} as desired.
\end{proof}\\

We can now use Lemma \ref{ReverseforD} and argue as in the proof of \cite[Lemma 2.10]{AF} to deduce the following important result. We remark this is where we use the assumption \eqref{A03} on $A_0(\xi)$.
\begin{lemma}\label{important} Under \eqref{A03}, there is a constant $C>0$, independent of $s$, such that for every $\tau\in(0,1)$ there exists $\epsilon>0$, independent of $s$, such that 
	\begin{equation}\label{z6}
	\Phi(x_0,R)\leq \epsilon \sup_{B_{R/2}(x_0)} H(\nabla v) \qquad \Rightarrow \qquad  \Phi(x_0,\tau R)\leq C \tau^2 \Phi(x_0,R)
	\end{equation}
	for every $B_R(x_0)\Subset\Omega$.	 
\end{lemma}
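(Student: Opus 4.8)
The plan is to prove the excess-decay estimate \eqref{z6} by the classical \emph{$\varepsilon$-regularity / compactness (blow-up) argument} of De Giorgi type, adapted to the degenerate vector field $A_0$ in the spirit of \cite[Lemma 2.10]{AF}. Fix $\tau\in(0,1)$ and suppose, for contradiction, that there is no admissible $\varepsilon>0$. Then there exist balls $B_{R_j}(x_j)\Subset\Omega$ and solutions $v_j$ of $\operatorname{div}A_0(\nabla v_j)=0$ such that, writing $\lambda_j^2:=\sup_{B_{R_j/2}(x_j)}H(\nabla v_j)$ and normalising, one has $\Phi_j:=\Phi(x_j,R_j)\to 0$ while $\Phi_j\le \tfrac1j\,\lambda_j^2$ but $\Phi(x_j,\tau R_j)> C\tau^2\Phi_j$ for every fixed $C$. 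After translating and rescaling we may take $x_j=0$, $R_j=1$; set $\xi_j:=[\nabla v_j]_{B_{1/2}(0)}$ and, using \eqref{ieq2}, normalise the rescaled maps
$$w_j(x):=\frac{v_j(x)-[v_j]_{B_{1/2}(0)}-\xi_j\cdot x}{\sqrt{\Phi_j}},$$
so that $\fint_{B_{1/2}}|V(\nabla v_j)-[V(\nabla v_j)]|^2\simeq \Phi_j$ normalises the rescaled excess to $1$.

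The key steps, in order, are: (1) \textbf{Uniform bounds and a limit equation.} The condition $\Phi_j\le \tfrac1j\lambda_j^2$ forces the gradients $\nabla v_j$ to be \emph{close to the constant $\xi_j$ relative to their size}, so on $B_{1/2}$ we are in the regime $\sup|\nabla v_j|^2\le c(s^2+|\xi_j|^2)$. This puts us exactly in the hypothesis of Lemma~\ref{ReverseforD}, giving a uniform higher-integrability bound $\fint_{B_{1/4}}|\nabla v_j-\xi_j|^{2+2\delta}\le C(\fint_{B_{1/2}}|\nabla v_j-\xi_j|^2)^{1+\delta}$, hence uniform $L^{2+2\delta}$ bounds on $\nabla w_j$ after the normalisation (using \eqref{ieq2} to compare $|V(\nabla v_j)-[V(\nabla v_j)]|$ with $(s^2+|\xi_j|^2)^{(p-2)/2}|\nabla v_j-\xi_j|$ in this regime). (2) \textbf{Passing to the limit.} Up to subsequences, $\xi_j/\sqrt{s^2+|\xi_j|^2}\to$ some unit-or-subunit vector, the normalised matrices $a_{ij}(\nabla v_j)=Z(\nabla v_j)^{2-p}A_{ij}(\nabla v_j)$ converge (uniformly, by continuity of $DA_0$ away from the degeneracy, and here the assumption \eqref{A03} supplies the needed modulus of continuity) to a constant elliptic matrix, and $w_j\rightharpoonup w$ weakly in $W^{1,2}_{\rm loc}(B_{1/2})$ with $w$ solving a \emph{constant-coefficient linear} elliptic equation $\operatorname{div}(a^\infty\nabla w)=0$. (3) \textbf{Linear decay + strong convergence.} For the limit $w$ one has the Campanato-type estimate $\fint_{B_{\tau}}|\nabla w-[\nabla w]_{B_\tau}|^2\le C\tau^2\fint_{B_{1/2}}|\nabla w-[\nabla w]_{B_{1/2}}|^2\le C\tau^2$ (with $C=C(n,\Lambda)$). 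Then one upgrades weak to strong convergence of $\nabla w_j$ in $L^2_{\rm loc}$ — this uses the Caccioppoli/energy estimate together with the higher integrability from Step (1), exactly as in \cite{AF} — so that $\fint_{B_\tau}|V(\nabla v_j)-[V(\nabla v_j)]|^2/\Phi_j\to \fint_{B_\tau}|\nabla w-[\nabla w]|^2\le C\tau^2$, contradicting $\Phi(0,\tau)/\Phi_j> C\tau^2$ once $j$ is large and $C$ is chosen larger than the universal constant from the linear theory. Finally, one checks the contradiction is genuine by noting all constants are independent of $s$ (the normalisations were $s$-free) and of the degeneracy parameter $|\xi_j|$.

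The main obstacle is Step (2)–(3): establishing that the rescaled solutions $w_j$ converge \emph{strongly} in $W^{1,2}_{\rm loc}$ to a solution of the correct limiting linear equation, uniformly in the position of $\xi_j$ relative to the degeneracy set $\{\xi=0\}$. When $|\xi_j|\gtrsim s$ the equation for $w_j$ is genuinely non-degenerate and the argument is the standard linearisation, but one must track that the ellipticity constants of $a_{ij}(\nabla v_j)$ stay between fixed bounds (which follows from \eqref{A01}–\eqref{A02} since $Z(\nabla v_j)\simeq (s^2+|\xi_j|^2)^{1/2}$ on $B_{1/2}$) and that the oscillation of $a_{ij}$ over $B_{1/2}$ tends to $0$ — here \eqref{A03} is precisely what gives a quantitative Hölder modulus for $DA_0$ and hence the vanishing oscillation, which is why this lemma, unlike the previous ones, invokes \eqref{A03}. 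A secondary technical point is the strong-convergence upgrade: it relies on the uniform reverse-Hölder bound of Lemma~\ref{ReverseforD} to prevent concentration, so the normalisation must be set up so that the hypothesis $\sup_{B_{R/2}}|\nabla v|^2\le c(s^2+|\xi|^2)$ of that lemma is automatically satisfied along the contradiction sequence — which it is, thanks to the smallness hypothesis $\Phi(x_0,R)\le \epsilon\,\sup_{B_{R/2}}H(\nabla v)$ combined with \eqref{Linf} and Lemma~\ref{PhiandH}.
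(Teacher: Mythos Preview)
Your strategy---a blow-up/compactness argument---is a legitimate route to this kind of excess-decay lemma, but it is \emph{not} the approach taken in the paper (nor, incidentally, in \cite[Lemma~2.10]{AF}, which you cite as inspiration: that proof is also direct, not by contradiction). The paper argues quantitatively: it fixes $\xi$ with $V(\xi)=[V(\nabla v)]_{B_R}$, introduces the solution $v_0$ of the constant-coefficient linear problem $\operatorname{div}(DA_0(\xi)\nabla v_0)=0$ with $v_0=\tilde v:=v-[v]-\xi\cdot(x-x_0)$ on $\partial B_{R/4}$, and then bounds $\fint|\nabla(v_0-\tilde v)|^2$ directly by writing the equation for $\tilde v$ and using \eqref{A03} to estimate $\int_0^1|DA_0(\xi+t\nabla\tilde v)-DA_0(\xi)|\,dt\lesssim Z(\xi)^{p-2-\alpha}|\nabla\tilde v|^\alpha$. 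Combined with the reverse H\"older inequality of Lemma~\ref{ReverseforD} this yields $\fint_{B_{R/4}}|\nabla(v_0-\tilde v)|^2\lesssim Z(\xi)^{-2\delta}\Phi(R)^{1+\delta}\lesssim \varepsilon^\delta Z(\xi)^{2-p}\Phi(R)$, and the Campanato decay for the linear $v_0$ then gives \eqref{z6} once one also checks $|\xi|\simeq |[\nabla v]_{B_{\tau R}}|$ for small $\varepsilon$.

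The direct proof has two practical advantages over your blow-up outline. First, $s$-independence is automatic because every inequality is tracked explicitly; in your compactness argument, saying ``the normalisations were $s$-free'' is not enough---you must also allow $s=s_j$ to vary along the contradicting sequence and normalise (e.g.\ so that $s_j^2+|\xi_j|^2=1$), then verify that the rescaled nonlinearities $\tilde A_0^{(j)}$ still satisfy \eqref{A01}--\eqref{A03} with the same $\Lambda,\alpha$ and admit a subsequential limit. Second, the direct argument makes transparent that the constant $C$ in \eqref{z6} is independent of $\tau$ (it is just the Campanato constant for the linear problem), which is essential for Lemma~\ref{leau}; your contradiction setup (``for every fixed $C$\dots once $C$ is chosen larger than\dots'') is phrased somewhat ambiguously on this point and should be reorganised to fix $C$ from the linear theory \emph{before} running the contradiction in $\varepsilon$.
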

\begin{proof} Take $\xi\in \mathbb{R}^n$ such that 
$V(\xi)=[V(D v)]_{B_R(x_0)}$. Then by \eqref{Linf}, 
	\begin{align*}
	\sup_{B_{R/2}(x_0)} H(\nabla v)& \leq C \fint_{B_R(x_0)} H(\nabla v)\leq \fint_{B_R(x_0)} \left(s^p+|V(\nabla v)|^2\right)\\&\leq C\left(s^p+\Phi(x_0,R)+|V(\xi)|^2\right).
	\end{align*}

	Thus, if $\varepsilon<1/2C$ we deduce
	\begin{equation}\label{EQ2b}
	\Phi(x_0,R)\leq 2C \varepsilon(s^p+V(\xi)|^2)\leq C\varepsilon (s^2+|\xi|^2)^{p/2},
	\end{equation}
	and hence
	\begin{equation}\label{EQ2}
	\sup_{B_{R/2}(x_0)}|\nabla v|^p \leq   \sup_{B_{R/2}(x_0)} H(\nabla v)\leq C (s^2+|\xi|^2)^{p/2}.
	\end{equation}

	Let $\tilde{v}$ be as in \eqref{vtil}, and let $v_0\in \tilde{v}+W_0^{1,2}(B_{R/4})$ be the solution of 
	\begin{equation}\label{v0equa}
	\int_{B_{R/4}} A_{ij}(\xi)\partial_{j} v_0\partial_{i}\varphi =0 \qquad \forall\varphi\in W^{1,2}_0(B_{R/4}).
	\end{equation}
	Since  $C^{-1}Z(\xi)^{p-2}\mathbb{I}_n\leq (A_{ij})\leq CZ(\xi)^{p-2}\mathbb{I}_n$, by the standard regularity we get 
	\begin{align}\label{Ineq1}
	\fint_{B_{\tau R}}|\nabla v_0-[\nabla v_0]_{B_{\tau R}}|^2dx\leq C\tau^2 \fint_{B_{R/4}}|\nabla v_0-[\nabla v_0]_{B_{R/4}}|^2dx,
	\end{align}
for every $\tau\in (0,1/4)$.

	Let $\varphi\in W^{1,2}_0(B_{R/4})$. Using the relation 
$$\int_{B_{R/4}} \left(A_{0}(\nabla v)-A_{0}(\xi)\right) \cdot \nabla \varphi=0,$$
 we can write 
$$
	\int_{B_{R/4}}\int_{0}^{1} A_{ij}(\xi+t \nabla \tilde{v}) dt  \partial_{j}  \tilde{v}\partial_i \varphi=0.
$$
	
Combining this with \eqref{v0equa} we have 
$$
	\fint_{B_{R/4}}\int_{0}^{1} \left(A_{ij}(\xi+t \nabla \tilde{v})-A_{ij}(\xi)\right) dt\partial_{j}\tilde{v}\partial_i \varphi dx=  \fint_{B_{R/4}} A_{ij}(\xi)\left(\partial_{j} v_0-\partial_{j}\tilde{v}\right)\partial_{i}\varphi dx.
$$

	Then choosing $ \varphi=v_0-\tilde{v}$ as a test function, we get 
	\begin{align*}
	Z(\xi)^{p-2} &\fint_{B_{R/4}} |\nabla (v_0-\tilde{v})|^2dx\leq \\
      &C \fint_{B_{R/4}}\int_{0}^{1}| \left(A_{ij}(\xi+t \nabla \tilde{v})-A_{ij}(\xi)\right)| dt|\nabla \tilde{v}| |\nabla (v_0-\tilde{v})| dx.
	\end{align*}

	On the other hand, thanks to \eqref{A03} we find that 
	\begin{align*}
	\int_{0}^{1} &| \left(A_{ij}(\xi+t\nabla\tilde{v})-A_{ij}(\xi)\right)| dt \\
&\leq CZ(\xi)^{p-2} \int_{0}^{1}  Z(\xi+t\nabla\tilde{v})^{p-2} (s^2+|\xi|^2+|\xi+t\nabla\tilde{v}|^2)^{(2-p-\alpha)/2}|t \nabla \tilde{v}|^\alpha dt\\& \leq 
	CZ(\xi)^{-\alpha} | \nabla \tilde{v}|^\alpha \int_{0}^{1}  Z(\xi+t\nabla\tilde{v})^{p-2}  dt \qquad (\text{by \eqref{EQ2}})\\
&\leq C Z(\xi)^{-\alpha} | \nabla \tilde{v}|^\alpha (s^2+|\xi|^2+|\nabla\tilde{v}|^2)^{(p-2)/2} \qquad (\text{by \eqref{ieq1}})
	\\&\leq CZ(\xi)^{p-2-\alpha}|\nabla \tilde{v}|^\alpha.
	\end{align*}

     Thus, 
$$
	\fint_{B_{R/4}} |\nabla (v_0-\tilde{v})|^2dx\leq C 	Z(\xi)^{-\alpha}\fint_{B_{R/4}}|\nabla \tilde{v}|^{1+\alpha} |\nabla (v_0-\tilde{v})| dx,
$$
which by H\"older's inequality yields 
$$
	\fint_{B_{R/4}} |\nabla (v_0-\tilde{v})|^2dx\leq C 	Z(\xi)^{-2\alpha}\fint_{B_{R/4}}|\nabla \tilde{v}|^{2+2\alpha}dx.
$$
	
        For any $0<\delta\leq \alpha$, by \eqref{EQ2}, 
	\begin{align*}
	\fint_{B_{R/4}} |\nabla (v_0-\tilde{v})|^2dx &\leq C 	Z(\xi)^{-2\delta}\fint_{B_{R/4}}|\nabla \tilde{v}|^{2+2\delta}dx\\
     &=C 	Z(\xi)^{-2\delta}\fint_{B_{R/4}}|\nabla v-\xi|^{2+2\delta}dx.
	\end{align*}

	Hence by \eqref{EQ2} and  Lemma \ref{ReverseforD}, we obtain 
	\begin{align}\label{z2}
	\fint_{B_{R/4}} |\nabla (v_0-\tilde{v})|^2dx\leq C 	Z(\xi)^{-2\delta}\left(\fint_{B_{R/2}}|\nabla v-\xi|^{2}dx\right)^{1+\delta}
	\end{align}
	for some $0<\delta\leq \alpha$.
	
          Note that by \eqref{ieq2}, 
	\begin{align*}
	\Phi(x_0,\tau R)&\leq C \fint_{B_{\tau R}}|V(\nabla v)-V([\nabla v]_{B_{\tau R}})| dx\\ &\leq C
	\fint_{B_{\tau R}}\left(s^2+|\nabla v|^2+|[\nabla v]_{B_{\tau R}}|^2\right)^{\frac{p-2}{2}}|\nabla v-[\nabla v]_{B_{\tau R}}|^2 dx \\ &\leq C \left(s^2+|[\nabla v]_{B_{\tau R}}|^2\right)^{\frac{p-2}{2}}
	\fint_{B_{\tau R}}|\nabla \tilde{v}-[\nabla \tilde{v}]_{B_{\tau R}}|^2 dx.
	\end{align*}

	Using \eqref{Ineq1}, for any  $\tau\in (0,1/4)$ we get 
	\begin{align*}
	\fint_{B_{\tau R}} &|\nabla \tilde{v}-[\nabla \tilde{v}]_{B_{\tau R}}|^2 dx\leq  2\fint_{B_{\tau R}}|\nabla v_0-[\nabla  v_0]_{B_{\tau R}}|^2 +|\nabla \tilde{v}-\nabla v_0|^2 dx\\&\leq  C\tau^2\fint_{B_{ R/4}}|\nabla v_0-[\nabla  v_0]_{B_{ R/4}}|^2 +C\tau^{-n}\fint_{B_{R/4}}|\nabla \tilde{v}-\nabla v_0|^2 dx 
	\\&\leq  C\tau^2\fint_{B_{ R/4}}|\nabla \tilde{v}-[\nabla  \tilde{v}]_{B_{ R/4}}|^2 +C\tau^{-n}\fint_{B_{R/4}}|\nabla \tilde{v}-\nabla v_0|^2 dx 
	\\&\leq  C\tau^2\fint_{B_{ R/2}}|\nabla v-\xi|^2 +C\tau^{-n}Z(\xi)^{-2\delta}\left(\fint_{B_{R/2}}|\nabla v-\xi|^{2}dx\right)^{1+\delta},
	\end{align*}
	where we used \eqref{z2} in the last inequality.

     On the other hand, by \eqref{ieq2} and \eqref{EQ2}, 
	\begin{align}\nonumber
	\fint_{B_{ R/2}}|\nabla v-\xi|^2 &\leq C\fint_{B_{ R/2}}\left(s^2+|\xi|^2+|\nabla v|^2\right)^{\frac{2-p}{2}} |V(\nabla v)-V(\xi)|^2 dx\\&\leq C(s^2+|\xi|^2)^{\frac{2-p}{2}}\Phi(x_0,R)\label{z3}.
	\end{align}

	Hence, 
	\begin{align*}
	\Phi(x_0,\tau R)&\leq C \left(\frac{s^2+|\xi|^2}{s^2+|[\nabla v]_{B_{\tau R}}|^2}\right)^{\frac{2-p}{2}}\times\\
	&\qquad \qquad \times \left(\tau^2\Phi(x_0,R)+\tau^{-n}Z(\xi)^{-\delta p}\Phi(x_0,R)^{1+\delta}\right),
	\end{align*}
	which by \eqref{EQ2b} yields $$
	\Phi(x_0,\tau R)\leq C \left(\frac{s^2+|\xi|^2}{s^2+|[\nabla v]_{B_{\tau R}}|^2}\right)^{\frac{2-p}{2}}
	\left(\tau^2+\tau^{-n}\varepsilon^\delta\right)\Phi(x_0,R).$$

	Now, we show that for $\varepsilon>0$ small enough, 
	\begin{align}\label{z4}
	|\xi|^2\leq C(s^2+|[\nabla v]_{B_{\tau R}}|^2).
	\end{align}
	Indeed, 
	\begin{align*}
	|\xi|^2&\leq 2\left(|\xi-[\nabla v]_{B_{\tau R}}|^2+|[\nabla v]_{B_{\tau R}}|^2\right)\\& \leq C\left(\fint_{B_{\tau R}}|\nabla v-\xi|^2+|[\nabla v]_{B_{\tau R}}|^2\right)
	\\& \leq C\left(\tau^{-n}\fint_{B_{R/2}}|\nabla v-\xi|^2+|[\nabla v]_{B_{\tau R}}|^2\right)
	\\& \leq C\left(\tau^{-n}(s^2+|\xi|^2)^{\frac{2-p}{2}}\Phi(x_0,R)+|[\nabla v]_{B_{\tau R}}|^2\right) \qquad (\text{by }  \eqref{z3})
	\\& \leq C\left(\tau^{-n}\varepsilon (s^2+|\xi|^2)+|[\nabla v]_{B_{\tau R}}|^2\right) \qquad (\text{by }  \eqref{EQ2b}).
	\end{align*}

	Thus if $C\tau^{-n}\varepsilon\leq 1/2$,  we obtain \eqref{z4}. Therefore, we get \eqref{z6}  if we further restrict $\varepsilon$ so that  $\varepsilon<\tau^{\frac{n+2}{\delta}}$.
\end{proof}

Lemmas \ref{PhiandH} and \ref{important}  yield the following alternative result.

\begin{lemma}\label{leau} Let $\tau,\varepsilon\in (0,1/4)$ be fixed as in Lemma \ref{important} such that $C\tau^2<\tau$, where $C$ is the constant in \eqref{z6}.  There exists $\delta=\delta(\tau)\in (0,1)$ such that either 
$$
	\Phi(x_0,\tau R)\leq \tau \Phi(x_0, R),$$
	or 
$$
	\Phi(x_0,R)\geq \varepsilon M(R/2)~~\text{and}~~M(R/4)\leq \delta M(R/2)$$
	provided $B_R(x_0)\Subset\Omega$.
\end{lemma}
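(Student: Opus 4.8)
The plan is to prove the stated dichotomy directly, by case analysis on the size of $\Phi(x_0,R)$ relative to $M(R/2)$ and, in the nondegenerate case, on whether $M$ drops between the scales $R/2$ and $R/4$. Keep $\tau$ and $\varepsilon=\varepsilon(\tau)$ as in the statement (shrinking $\varepsilon$ if necessary, which does not affect the implication in Lemma \ref{important} since its hypothesis only becomes harder to meet), let $c$ denote the constant of Lemma \ref{PhiandH}, and set $\delta:=1-c^{-1}4^{n}\tau^{n+1}\varepsilon$; after the shrinking of $\varepsilon$ we may assume $\delta\in(0,1)$, and $\delta$ depends only on $\tau$ (through $\varepsilon$, $c$, $C$) and not on $s$.

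Suppose first that $\Phi(x_0,R)\le \varepsilon M(R/2)=\varepsilon\sup_{B_{R/2}(x_0)}H(\nabla v)$. Then Lemma \ref{important} applies at scale $R$ and gives $\Phi(x_0,\tau R)\le C\tau^{2}\Phi(x_0,R)$; since $\tau$ was chosen so that $C\tau^{2}<\tau$, this is exactly the first alternative.

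Now suppose $\Phi(x_0,R)>\varepsilon M(R/2)$, so the first half of the second alternative already holds. If in addition $M(R/4)\le\delta M(R/2)$, the second alternative holds in full and we are done; otherwise $M(R/4)>\delta M(R/2)$. In that case I would apply Lemma \ref{PhiandH} on the ball $B_{R/2}(x_0)\Subset\Omega$, obtaining $\Phi(x_0,R/4)\le c\,(M(R/2)-M(R/4))<c(1-\delta)M(R/2)=4^{n}\tau^{n+1}\varepsilon\,M(R/2)$. Since $\tau<1/4$ we have $B_{\tau R}(x_0)\subset B_{R/4}(x_0)$, and by the minimality of the mean (replacing $[V(\nabla v)]_{B_{\tau R}}$ by $[V(\nabla v)]_{B_{R/4}}$) and then enlarging the domain of integration one gets the nested-scale estimate $\Phi(x_0,\tau R)\le(4\tau)^{-n}\Phi(x_0,R/4)$. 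Combining these two bounds gives $\Phi(x_0,\tau R)<(4\tau)^{-n}4^{n}\tau^{n+1}\varepsilon\,M(R/2)=\tau\varepsilon M(R/2)<\tau\Phi(x_0,R)$, i.e., the first alternative again holds. Thus in every case one of the two alternatives is verified.

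The one genuinely delicate point is the scale mismatch: Lemma \ref{PhiandH} controls the excess at scale $R/4$, whereas the first alternative is a statement at scale $\tau R<R/4$. This is precisely why the definition of $\delta$ must carry the extra power $\tau^{n+1}$, so that the loss $(4\tau)^{-n}$ incurred in passing from $B_{R/4}$ down to $B_{\tau R}$ is absorbed; the remaining bookkeeping — checking $\delta\in(0,1)$, that $\varepsilon$ can be taken as small as needed while still fulfilling Lemma \ref{important}, and that all constants stay independent of $s$ — is routine.
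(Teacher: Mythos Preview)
Your argument is correct and follows the same strategy as the paper's proof: apply Lemma~\ref{important} in the nondegenerate case $\Phi(x_0,R)\le\varepsilon M(R/2)$, and in the remaining case use Lemma~\ref{PhiandH} at scale $R/2$ together with the assumption $M(R/4)>\delta M(R/2)$ to force $\Phi(x_0,R/4)$, and hence $\Phi(x_0,\tau R)$, to be small compared to $\Phi(x_0,R)$. The only cosmetic difference is that you make the passage from scale $R/4$ down to $\tau R$ explicit via the factor $(4\tau)^{-n}$ and accordingly pin down $\delta=1-c^{-1}4^{n}\tau^{n+1}\varepsilon$, whereas the paper absorbs that loss into an unspecified constant $C(\tau)$ and then chooses $\delta$ so that $C(\tau)(1-\delta)\varepsilon^{-1}<\tau$.
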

\begin{proof} 
	If $\Phi(x_0,R)< \varepsilon M(R/2)$, then by Lemma \ref{important} we get $ \Phi(x_0,\tau R)\leq \tau \Phi(x_0,R)$. If  $M(R/4)> \delta M(R/2)$ and 	$\Phi(x_0,R)\geq \varepsilon M(R/2)$, then by Lemma \ref{PhiandH}, 
	\begin{align*}
	\Phi(x_0,R/4) & \leq C[M(R/2) - M(R/4)]\\
	&\leq C(1-\delta)M(R/2)\leq C(1-\delta)\varepsilon^{-1}\Phi(x_0,R).
	\end{align*}
	
	Thus, 
$$
	\Phi(x_0,\tau R) \leq C(\tau)(1-\delta)\varepsilon^{-1}\Phi(x_0,R).
$$
	
	Now choosing $\delta\in (0,1)$ such that $C(\tau)(1-\delta)\varepsilon^{-1}<\tau$ we get the result.
\end{proof}

We next follow an alternative argument  in the spirit of \cite[Theorem 3.1]{GM} to derive a decay estimate for the excess functional $\Phi(x_0,r)$.


\begin{theorem}\label{calphaforphi} Suppose  that $A_0$ satisfies \eqref{A01}, \eqref{A02}, and \eqref{A03}. 
	There exist constants $C>1$ and $\sigma_1\in(0,1)$, both independent of $s$, such that
	\begin{equation*}
	\Phi(x_0,\rho)\leq C \left(\frac{\rho}{R}\right)^{2\sigma_1} \Phi(x_0,R)
	\end{equation*}
	for every $B_R(x_0)\subset\Omega$ and $\rho<R$. 
\end{theorem}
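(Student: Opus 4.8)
The plan is to run a standard iteration/dichotomy argument on dyadic (or $\tau$-adic) balls, using Lemma \ref{leau} as the engine. Fix $\tau,\varepsilon\in(0,1/4)$ as in that lemma, with $C\tau^2<\tau$, and let $\delta=\delta(\tau)\in(0,1)$ be the associated constant. Set $R_j=\tau^{j}R$ and abbreviate $\Phi_j=\Phi(x_0,R_j)$, $M_j=M(R_j)$. At each scale $R_j$ one of the two alternatives of Lemma \ref{leau} (applied on the ball $B_{R_j}(x_0)$, whose closure lies in $\Omega$) holds: either the ``good'' decay $\Phi_{j+1}\leq\tau\Phi_j$ occurs, or the ``bad'' case $\Phi_j\geq\varepsilon M(R_j/2)$ together with $M(R_j/4)\leq\delta M(R_j/2)$ occurs. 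The point is that the bad case forces a genuine \emph{geometric decay of $M$} along a subsequence: each time it happens, $M$ drops by the fixed factor $\delta$ between comparable scales, while in the good case $M$ is merely nonincreasing (since $M(r)$ is monotone in $r$). Since $M(R)\leq C\fint_{B_R}H(\nabla v)<\infty$ by \eqref{Linf} and $M\geq 0$, the bad case can only occur a controlled proportion of the time; more precisely, after $k$ occurrences of the bad alternative, $M$ has decreased by a factor $\delta^{k}$ relative to its comparable value at the start, so $M(R_j/2)\leq C\delta^{k(j)} M(R)$ where $k(j)$ is the number of bad steps among the first $j$.

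The next step is to convert this into decay of $\Phi_j$. In the bad case $\Phi_j\geq\varepsilon M(R_j/2)$ is an \emph{upper} bound obstruction, but what we actually use is: whenever the bad case occurs at step $j$, combine $\Phi_j\geq\varepsilon M(R_j/2)$ is not directly what we want — instead we use Lemma \ref{PhiandH}, which in the bad subcase gives $\Phi(x_0,R_j/4)\leq C[M(R_j/2)-M(R_j/4)]\leq C(1-\delta)M(R_j/2)$, hence $\Phi$ at the next comparable scale is controlled by $M(R_j/2)$, which is itself decaying geometrically in the number of bad steps. So in both alternatives we obtain, at comparable consecutive scales, a bound of the form $\Phi_{j+1}\leq \max\{\tau\Phi_j,\; C(1-\delta)M(R_j/2)\}$ together with the decay of $M$. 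Feeding the $M$-decay back, one gets $\Phi_j\leq C\lambda^{j}\Phi_0$ for some $\lambda\in(\tau,1)$ — e.g.\ take $\lambda=\max\{\tau,\delta^{c}\}$ for a suitable $c>0$ — by the same kind of bookkeeping used in Giaquinta--Modica type alternating lemmas. A clean way to organize this is: let $j_0$ be the first scale at which the good case holds; for $j\le j_0$ one has pure $M$-decay controlling $\Phi$, and for $j\ge j_0$ one restarts the argument. Iterating, $\Phi(x_0,R_j)\leq C\lambda^{j}\Phi(x_0,R)$ for all $j\ge 0$.

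Finally, one upgrades from the discrete sequence $R_j=\tau^j R$ to arbitrary $\rho<R$ in the usual way: given $\rho<R$, pick $j$ with $R_{j+1}\le\rho<R_j$, use $\Phi(x_0,\rho)\le \tau^{-n}\Phi(x_0,R_j)$ (comparing averages on nested balls, since $\Phi$ is an $L^2$-oscillation of $V(\nabla v)$ and enlarging the domain of an average by a bounded factor costs only $\tau^{-n}$), and then $\lambda^{j}=(\tau^{j})^{\log\lambda/\log\tau}\le C(\rho/R)^{\log\lambda/\log\tau}$. Setting $2\sigma_1=\log\lambda/\log\tau\in(0,1)$ (note $\lambda>\tau$ makes this positive and less than $1$; shrink if necessary) yields the claimed estimate $\Phi(x_0,\rho)\le C(\rho/R)^{2\sigma_1}\Phi(x_0,R)$, with $C,\sigma_1$ depending only on the structural constants through $\tau,\varepsilon,\delta$, hence independent of $s$.

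The main obstacle is the bookkeeping in the dichotomy: making precise how the geometric decay of $M$ in the ``bad'' branch compensates for the fact that the ``bad'' branch does \emph{not} by itself give decay of $\Phi$, and showing that the two alternatives interleave so as to produce a single exponential rate $\lambda<1$ valid at \emph{every} scale rather than only along a subsequence. The monotonicity $M(R/4)\le M(R/2)\le M(R)$ and the finiteness and nonnegativity of $M$ are what prevent the bad case from occurring too often, but one must carefully track constants (in particular that $C\tau^2<\tau$ is used to absorb the good-case constant) to close the iteration; this is precisely the alternating-lemma mechanism of \cite[Theorem 3.1]{GM}, adapted here to the functionals $\Phi$ and $M$ built from $V(\nabla v)$ and $H(\nabla v)$.
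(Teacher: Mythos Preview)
Your approach is correct and is precisely the paper's: both run the dichotomy of Lemma \ref{leau} along $\tau$-adic radii and close by the alternating mechanism of \cite[Theorem 3.1]{GM}. Two slips in your sketch, though: in the bad branch, $M(R_j/4)\le\delta M(R_j/2)$ gives the \emph{lower} bound $M(R_j/2)-M(R_j/4)\ge(1-\delta)M(R_j/2)$, so Lemma \ref{PhiandH} only yields $\Phi(x_0,R_j/4)\le C\,M(R_j/2)$ (your factor $1{-}\delta$ points the wrong way); and the ``restart at the first good scale'' heuristic does not by itself produce a single rate $\lambda<1$ valid at every $j$, since a bad step may inflate $\Phi$ by a factor $\sim\varepsilon^{-1}$ before the $M$-decay kicks in.

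The paper makes the bookkeeping precise via a block argument: pick $k,h\in\mathbb{N}$ with $\delta^{k}\varepsilon^{-1}<\tau$ and $\tau^{h/k-1}\varepsilon^{-2}<\tau$, set $r_j=\tau^{jh}R$, and show $\Phi(r_{j+1})\le\tau\,\Phi(r_j)$ by splitting on whether the block $[jh,(j+1)h]$ of $\tau$-adic indices contains at least $k$ bad ones. If so, chain $\Phi(r_{j+1})\le M(\rho_{(j+1)h})$, then $k$ factors of $\delta$ from the $M$-drops at the bad indices, then $M(\rho_{n_1}/2)\le\varepsilon^{-1}\Phi(\rho_{n_1})\le\varepsilon^{-1}\Phi(r_j)$ (the last step because all indices before the first bad one $n_1$ are good). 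If not, the block contains a maximal run of more than $h/k-1$ consecutive good indices, giving $\tau^{h/k-1}$ decay of $\Phi$ along that run, and one pays at most $\varepsilon^{-2}$ to pass from $r_{j+1}$ and $r_j$ to the run's endpoints through bad neighbors (via $\Phi\le M\le\varepsilon^{-1}\Phi$). This is exactly the GM alternating lemma, and is what you would fill in for your flagged ``main obstacle''.
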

\begin{proof} For ease of notation, we shall drop $x_0$ and  write $\Phi(x_0,r)$ as $\Phi(r)$. Let $\tau,\varepsilon$ and $\delta$ be  as in Lemma \ref{leau}. Let $k,h\in \mathbb{N}$ be such that $
	\delta^k\varepsilon<\tau$ and $\tau^{(h/k)-1}\varepsilon^{-2}<\tau$. Also, let $
	r_j=\tau^{jh}R$ and $\rho_j=\tau^{j}R$.
	It is enough to show that
$$
	\Phi(r_{j+1})\leq \tau \Phi(r_j).
$$
	
	To this end,  we put 
$$
	\Sigma_1:=\left\{i\in \mathbb{N}: \Phi(\rho_{i+1})\leq \tau\Phi(\rho_{i}) \right\},
$$
and	
$$
	\Sigma_2:=\left\{i\in \mathbb{N}: \Phi(\rho_i)\geq \varepsilon M(\rho_{i}/2),~~M(\rho_{i}/4)\leq \delta M(\rho_{i}/2) \right\}.
$$
	
	Thanks to Lemma \ref{leau}, we get $\Sigma_1\cup\Sigma_2=\mathbb{N}.$ We now consider the following two cases.
	
	\noindent \underline{Case 1:} $[jh,(j+1)h]\cap \Sigma_2=\{n_1,...,n_q\}$ contains more than $k$ points. Then, 
	\begin{align*}
	\Phi(r_{j+1})&\leq M(\rho_{(j+1)h})\leq M(\rho_{n_q}/4)\leq \delta M(\rho_{n_q}/2) \leq \delta M(\rho_{n_{q-1}}/4) \\
	& \leq \dots \leq \delta^k M(\rho_{n_1}/2)\leq \delta^k\varepsilon^{-1}\Phi(\rho_{n_1})\leq \delta^k\varepsilon^{-1}\Phi(r_j).
	\end{align*} 
	
	Thus  we have $ 	\Phi(r_{j+1})\leq \tau \Phi(r_j).$

	\noindent \underline{Case 2:}  $[jh,(j+1)h]\cap \Sigma_2$ contains less than $k$ points. Then $[jh,(j+1)h]\cap \Sigma_1$ contains a maximal string of
	consecutive integers $n_0,n_0+1,...,n_0+m$ which has more  than $h/k-1$ numbers.  Moreover, by maximality we have   $n_0-1$ and $n_0+m+1$ belong to $\Sigma_2$. Thus 
	\begin{align}\label{st1}
	\Phi(\rho_{n_0+m+1})=\Phi(\tau \rho_{n_0+m})\leq \tau^{\frac{h}{k}-1}\Phi(\rho_{n_0}).
	\end{align} 
	
	To estimate $\Phi(\rho_{n_0+m+1})$ from below,   we consider the following possibilities:
	
	i) If $n_0+m+1=(j+1)h$, then $\Phi(r_{j+1})=\Phi(\rho_{n_0+m+1})$.
	
	ii) If $n_0+m+1<(j+1)h$, then 
$$
	\Phi(r_{j+1}) \leq M(\rho_{(j+1)h})\leq M(\rho_{n_0+m+1}/2) \leq \varepsilon^{-1}\Phi(\rho_{n_0+m+1}).
$$
	
	Thus in both cases we have 
	\begin{align}\label{i+ii}
	\Phi(r_{j+1}) \leq  \varepsilon^{-1}\Phi(\rho_{n_0+m+1}).
	\end{align}
	
	On the other hand, to estimate $\Phi(\rho_{n_0})$ from above,   we consider the following possibilities:
	
	a) If $n_0=jh$, then $\Phi(\rho_{n_0)}=\Phi(r_j)$.
	
	b) If $n_0>jh$, then $n_0-1\in [jh, (j+1)h]\cap \Sigma_2$. In this case, we let $m_0$ be the smallest integer in $[jh,(j+1)h]\cap \Sigma_2\cap(-\infty,n_0-1]$. Then we have  
$$
	\Phi(\rho_{n_0})\leq M(\rho_{n_0-1}/2)\leq M(\rho_{m_0}/2)\leq \varepsilon^{-1}\Phi(\rho_{m_0}).
$$
	
	Since either $m_0=jh$ or $jh,...,m_0-1\in \Sigma_1$, we then find
$$
	\Phi(\rho_{n_0})\leq  \varepsilon^{-1}\tau^{m_0-jh}\Phi(\rho_{jh})\leq \varepsilon^{-1}\Phi(r_{j}).
$$
	 
%
%
%

	Thus in both cases we have
	\begin{align}\label{abc}
	\Phi(\rho_{n_0})\leq \varepsilon^{-1}\Phi(r_{j}).
	\end{align}
	
	Finally, combining \eqref{st1}, \eqref{i+ii} and \eqref{abc} we  find that  $$\Phi(r_{j+1})\leq \tau^{\frac{h}{k}-1}\varepsilon^{-2}\Phi(r_j) \leq \tau \Phi(r_j),$$
	which completes the proof of the theorem.
\end{proof}\\
\begin{lemma}\label{RPhi} Under  	\eqref{A01} and \eqref{A02}, there exsit $C>0$ and $\theta\in(0,1)$ such that for any $B_R(x_0)\subset\Omega$ we have 	$$\fint_{B_{R/2}(x_0)} |V(\nabla v) - V(z_0)|^{2} \leq C \left(\fint_{B_{R}(x_0)} |V(\nabla v) - V(z_0)|^{2\theta}\right)^{\frac{1}{\theta}},$$
	for any vector $z_0\in \mathbb{R}^n$.
\end{lemma}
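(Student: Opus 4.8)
As in the reduction used for Theorem~\ref{hol-Du}, we may assume $s>0$, so that $\lambda^2:=s^2+|z_0|^2>0$. The plan is a Caccioppoli-plus-Sobolev--Poincar\'e argument carried through the vector field $V$; the only consequences of \eqref{A01}--\eqref{A02} that I will use are the algebraic inequalities \eqref{ieq1}, \eqref{ieq2} and the equivalences \eqref{Aandxi}, \eqref{Aonly}. Fix $B_R(x_0)\subset\Omega$ and $z_0\in\mathbb{R}^n$; let $\ell$ be the affine function with $\nabla\ell\equiv z_0$ and $\fint_{B_R(x_0)}(v-\ell)=0$, and set $w:=v-\ell$, so that $\nabla w=\nabla v-z_0$. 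Since $A_0(z_0)$ is a constant vector, \eqref{testf} extends---using that $H(\nabla v)\in L^\infty_{\rm loc}(\Omega)$ by \eqref{Linf}, whence $\nabla v\in L^\infty_{\rm loc}$ and $v,w\in W^{1,\infty}_{\rm loc}$---to the identity $\int(A_0(\nabla v)-A_0(z_0))\cdot\nabla\psi=0$ for every compactly supported $\psi\in W^{1,p}$.

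First I would establish a Caccioppoli inequality. Taking $\psi=\varphi^2 w$ with $\varphi$ a cutoff between $B_{R/2}(x_0)$ and $B_R(x_0)$, $|\nabla\varphi|\le C/R$, and ``filling the hole'', one gets
\[
\int\bigl(A_0(\nabla v)-A_0(z_0)\bigr)\cdot(\nabla v-z_0)\,\varphi^2=-2\int\bigl(A_0(\nabla v)-A_0(z_0)\bigr)\cdot\nabla\varphi\;w\,\varphi.
\]
By \eqref{Aandxi} the left-hand side is $\ge c\int|V(\nabla v)-V(z_0)|^2\varphi^2$, and by \eqref{Aonly} combined with \eqref{ieq2} one has $|A_0(\nabla v)-A_0(z_0)|\,|\nabla v-z_0|\simeq|V(\nabla v)-V(z_0)|^2$. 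Writing $|A_0(\nabla v)-A_0(z_0)|$ as this product divided by $|\nabla v-z_0|$, applying Young's inequality, and absorbing a small multiple of $\int|V(\nabla v)-V(z_0)|^2\varphi^2$, one obtains
\[
\fint_{B_{R/2}(x_0)}|V(\nabla v)-V(z_0)|^2\le\frac{C}{R^2}\fint_{B_R(x_0)}\bigl(s^2+|\nabla v|^2+|z_0|^2\bigr)^{\frac{p-2}{2}}|w|^2.
\]
Since $s^2+|\nabla v|^2+|z_0|^2\ge\lambda^2$ and $\tfrac{p-2}{2}<0$, the elementary estimate $R^{-2}a^{(p-2)/2}\tau^2\le\lambda^{p-2}(\tau/R)^2+(\tau/R)^p$ (valid for $a\ge\lambda^2$ and $\tau\ge0$, obtained by distinguishing whether $\tau/R$ exceeds $a^{1/2}$), applied with $a=s^2+|\nabla v|^2+|z_0|^2$ and $\tau=|w|$, reduces the right-hand side to
\[
C\,\lambda^{p-2}\fint_{B_R(x_0)}|w/R|^2+C\fint_{B_R(x_0)}|w/R|^p,
\]
and Sobolev--Poincar\'e applied to $w$ (mean zero on $B_R(x_0)$) turns this, with $m_1:=\tfrac{2n}{n+2}$ and $m_2:=\tfrac{np}{n+p}$, into
\[
\fint_{B_{R/2}(x_0)}|V(\nabla v)-V(z_0)|^2\le C\,\lambda^{p-2}\Bigl(\fint_{B_R(x_0)}|\nabla v-z_0|^{m_1}\Bigr)^{\frac{2}{m_1}}+C\Bigl(\fint_{B_R(x_0)}|\nabla v-z_0|^{m_2}\Bigr)^{\frac{p}{m_2}}.
\]

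It then remains to convert $|\nabla v-z_0|$ back into $|V(\nabla v)-V(z_0)|$ on the right, and this is the step I expect to be the main obstacle. The tool is the two-regime equivalence which follows from \eqref{ieq2}: one has $|V(\xi)-V(z_0)|^2\simeq\lambda^{p-2}|\xi-z_0|^2$ when $|\xi-z_0|\le\lambda$, and $|V(\xi)-V(z_0)|^2\simeq|\xi-z_0|^p$ when $|\xi-z_0|>\lambda$. Splitting each of the two averages over $\{|\nabla v-z_0|\le\lambda\}$ and $\{|\nabla v-z_0|>\lambda\}$ and substituting, the prefactor $\lambda^{p-2}$ of the first term is exactly cancelled, on $\{|\nabla v-z_0|\le\lambda\}$, by the factor $\lambda^{2-p}$ coming from $|\nabla v-z_0|^2\simeq\lambda^{2-p}|V(\nabla v)-V(z_0)|^2$; on $\{|\nabla v-z_0|>\lambda\}$ one uses in addition the pointwise lower bound $|V(\nabla v)-V(z_0)|\ge c\,\lambda^{p/2}$ valid there, and tracks the remaining powers of $\lambda$, so that they cancel as well. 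A final application of H\"older's inequality then yields
\[
\fint_{B_{R/2}(x_0)}|V(\nabla v)-V(z_0)|^2\le C\Bigl(\fint_{B_R(x_0)}|V(\nabla v)-V(z_0)|^{2\theta}\Bigr)^{\frac1\theta}
\]
for some $\theta\in(0,1)$ and $C>0$ depending only on $n$ and $p$, in particular independent of $s$. The delicacy lies precisely in this last conversion: in the singular range $1<p<2$ the quantities $|\nabla v-z_0|$ and $|V(\nabla v)-V(z_0)|$ are genuinely inequivalent in the regime $|\nabla v-z_0|\lesssim\sqrt{s^2+|z_0|^2}$, so the passage back to $V$ must be made separately on the two sets, and the bookkeeping of the powers of $\lambda=\sqrt{s^2+|z_0|^2}$ has to be carried out carefully so that the final constant and exponent are $s$-independent---this uniformity being what will allow the lemma to serve as the input of a subsequent Gehring-type higher-integrability argument. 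All the remaining ingredients---the Caccioppoli inequality, the algebraic estimates \eqref{ieq1}--\eqref{ieq2} and \eqref{Aandxi}--\eqref{Aonly}, and Sobolev--Poincar\'e---are routine and parallel the computations in Lemmas~\ref{ReverseforD} and \ref{important}.
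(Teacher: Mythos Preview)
Your Caccioppoli inequality
\[
\fint_{B_{R/2}}|V(\nabla v)-V(z_0)|^2 \le \frac{C}{R^2}\fint_{B_R}\bigl(s^2+|\nabla v|^2+|z_0|^2\bigr)^{\frac{p-2}{2}}|w|^2
\]
is correct, but the argument breaks at the next step. Writing $\lambda^2=s^2+|z_0|^2$, the weight on the right equals $(\lambda^2+|\nabla w|^2)^{(p-2)/2}$; bounding it crudely by $\lambda^{p-2}$ (your ``elementary estimate'' is just this, with a harmless nonnegative term added) and applying Sobolev--Poincar\'e produces the quantity $\lambda^{p-2}\bigl(\fint_{B_R}|\nabla w|^{m_1}\bigr)^{2/m_1}$. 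This has the wrong homogeneity on $\{|\nabla w|\gg\lambda\}$: there $|V(\nabla v)-V(z_0)|^2\simeq|\nabla w|^p$, so any $\bigl(\fint|V|^{2\theta}\bigr)^{1/\theta}$ scales like $|\nabla w|^p$, whereas your term scales like $\lambda^{p-2}|\nabla w|^2=(|\nabla w|/\lambda)^{2-p}|\nabla w|^p$. Concretely, if $|\nabla w|=M\mathbf 1_E$ with $|E|=\delta|B_R|$ and $M\gg\lambda$, your term equals $\lambda^{p-2}\delta^{2/m_1}M^2$ while $\bigl(\fint|V|^{2\theta}\bigr)^{1/\theta}\simeq\delta^{1/\theta}M^p$; the ratio blows up as $M\to\infty$ for every $\theta$. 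The two-regime conversion you sketch cannot repair this, because once you pass to $\lambda^{p-2}|w/R|^2$ the information that the weight was \emph{small} on $\{|\nabla w|>\lambda\}$ has been thrown away. (Note also that the second term $|w/R|^p$ only enlarges the bound; taking the sum does not give you a min.)

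The paper avoids this loss by keeping the shifted $N$-function structure intact, following \cite[Lemma~3.4]{DSV}. With $\varphi_a(t)\simeq(a^2+t^2)^{(p-2)/2}t^2$ and $a=\lambda$ one has $|A_0(\nabla v)-A_0(z_0)|\simeq\varphi_a'(|\nabla w|)$; Young's inequality for the conjugate pair $(\varphi_a,\varphi_a^*)$ together with $\varphi_a^*(\varphi_a'(t))\simeq\varphi_a(t)$ yields the Caccioppoli in the matched form $\fint_{B_{R/2}}\varphi_a(|\nabla w|)\le C\fint_{B_R}\varphi_a(|w|/R)$. A Sobolev--Poincar\'e inequality in Orlicz form---holding with constants depending only on the $\Delta_2$-constants of $\varphi_a$ and $\varphi_a^*$, which are uniform in $a$ and $s$---then gives $\fint_{B_R}\varphi_a(|w|/R)\le C\bigl(\fint_{B_R}\varphi_a(|\nabla w|)^\theta\bigr)^{1/\theta}$, and since $|V(\nabla v)-V(z_0)|^2\simeq\varphi_a(|\nabla w|)$ the lemma follows. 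The point is that the quadratic Young inequality you use is too coarse in the singular range $1<p<2$; the $N$-function Young inequality is exactly what carries the correct weight through to the right-hand side.
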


\begin{proof} Note that \eqref{Aandxi} and \eqref{Aonly} can be equivalently written as 
	\begin{equation}\label{Aex1}	
	(A_0(\xi)-A_0(\eta))\cdot(\xi-\eta)\simeq (s^2 + |\eta|^2 +|\xi-\eta|^2)^{\frac{p-2}{2}}|\xi-\eta|^2
	\end{equation}
	and 
	\begin{equation*}
	|A_0(\xi)-A_0(\eta)| \simeq (s^2 + |\eta|^2 +|\xi-\eta|^2)^{\frac{p-2}{2}}|\xi-\eta|.
	\end{equation*}

	Also, by \eqref{ieq2} we find
	\begin{equation}\label{Viex}
	|V(\xi) -V(\eta)|^2 \simeq (s^2 + |\eta|^2 +|\xi-\eta|^2)^{\frac{p-2}{2}}|\xi-\eta|^2.
	\end{equation}
	
	Let $\varphi: [0, \infty)\rightarrow [0, \infty)$  be the $N$-function defined by 
	\begin{equation}\label{thephi}
	\varphi(t):=\int_{0}^t (s^2 + u^2)^\frac{p-2}{2} u du\simeq (s^2 + t^2)^\frac{p-2}{2} t^2.
	\end{equation}

	Then the  complementary function $\varphi^*$ of $\varphi$ is given by
	\begin{equation} \label{varphi*def}
	\varphi^*(u)=\sup_{t\geq 0}(ut -\varphi(t))=\int_{0}^u (\varphi')^{-1}(t) dt,
	\end{equation}
	where $(\varphi')^{-1}(t)$ is the inverse function of $\varphi'(u)=(s^2 + u^2)^\frac{p-2}{2} u$.
	
	By  noticing that $s^2+t^2 \simeq t^2$ when $s\leq t$ and $s^2+t^2 \simeq s^2$ when $s\geq t$,
	it is easy to see that 
	$$(\varphi')^{-1}(t) \simeq (s^{2(p-1)} + t^2)^{\frac{p'-2}{2}}t$$ 
	uniformly in  $t\geq 0$. Thus it follows from \eqref{varphi*def} that 
	$$\varphi^*(u) \simeq (s^{2(p-1)} + u^2)^{\frac{p'-2}{2}}u^2, \qquad p'=\frac{p}{p-1}.$$
	
	We remark that  
	both $\varphi$ and $\varphi^*$ satisfy the $\Delta_2$-condition, i.e., $\varphi(2t)\leq c \varphi(t)$ and $\varphi^*(2t)\leq c \varphi^*(t)$ for all $t\geq 0$. Here the constant $c$ is independent of $s$, $t$, and $a$.

	The bounds \eqref{Aex1}--\eqref{Viex} enable us to follow the argument in the proof of \cite[Lemma 3.4]{DSV}, using the $N$-function $\varphi$  
	defined in \eqref{thephi} to complete the proof of the lemma.
\end{proof}

We are now ready to prove Theorem \ref{hol-Du}.

\noindent \begin{proof}[Proof of Theorem \ref{hol-Du}] 
	For any $z_0\in \mathbb{R}^n$, using  \eqref{ieq2} we have
	$$|U_q(\nabla v)-U_q(z_0)| \simeq h_{|z_0|}(|\nabla v-z_0|),$$
	where 
	$$h_{|z_0|}(t)=(s^2 + |z_0|^2 + t^2)^{\frac{q-2}{2}} t.$$

	We now let 
	$$g_{|z_0|^{q-1}}(t)= (s^{2(q-1)} + |z_0|^{2(q-1)} + t^2)^{\frac{p}{2(q-1)}-1}\, t^2.$$

	Then we have
	$$g_{|z_0|^{q-1}}(h_{|z_0|}(t)) \simeq (s^2 + |z_0|^2 +t^2)^{\frac{p-2}{2}}t^2,$$
	and thus by  \eqref{Viex}  it holds that   	 
	\begin{equation}\label{V-Uq}
	|V(\nabla v)-V(z_0)|^2 \simeq  g_{|z_0|^{q-1}}(|U_q(\nabla v)-U_q(z_0)|).
	\end{equation}

	Let $R_m:=2^{-m}(R/2)$ for $m\in \mathbb{Z}$. To prove \eqref{HU}, it is enough to show it with $\rho= R_m$ for all sufficiently large $m\in \mathbb{N}$.

	By Theorem \ref{calphaforphi}	there exists $\sigma_1 \in (0,1)$ such that 
	\begin{align*}
	\fint_{B_{R_m}} |V(\nabla v) - [V(\nabla v)]_{B_{R_m}}|^2 & \leq C 2^{-2m\sigma_1} \fint_{B_{R/2}} |V(\nabla v) - [V(\nabla v)]_{B_{R/2}}|^2\\
	&\leq C 2^{-2m\sigma_1} \fint_{B_{R/2}} |V(\nabla v) - V(z_0)|^2,
	\end{align*}	
	where  $z_0$ is chosen so that $ U_q(z_0) = [U_q(\nabla v)]_{B_{R}}$.	Thus it follows from \eqref{V-Uq}, Lemma \ref{RPhi}, and \cite[Corollary 3.4]{Diening} that 
	\begin{align}\label{v-z0}
	\fint_{B_{R_m}} & |V(\nabla v) - [V(\nabla v)]_{B_{R_m}}|^2\nonumber\\
	 & \leq C 2^{-2m\sigma_1} g_{|z_0|^{q-1}}\left(\fint_{B_{R}} |U_q(\nabla v)-U_q(z_0)|\right).
	\end{align}

	Note that $s^{2(q-1)} + |z_0|^{2(q-1)} \simeq s^{2(q-1)} + |U_q(z_0)|^2$ and thus  by \cite[Corollary 26]{DK}, for any $z\in\mathbb{R}^n$, we have  
	\begin{align*}
	g_{|z_0|^{q-1}}(t)&\simeq (s^{2(q-1)} + |U_q(z_0)|^2 + t^2)^{\frac{p}{2(q-1)}-1}\, t^2\\
	&\leq  C (s^{2(q-1)} + |U_q(z)|^2 + t^2)^{\frac{p}{2(q-1)}-1}\, t^2 \\
	& \quad +  C (s^{2(q-1)} + |U_q(z_0)|^2 +|U_q(z)|^2)^{\frac{\frac{p}{q-1}-2}{2}}|U_q(z_0)-U_q(z)|^2\\
	&\leq  C (s^{2(q-1)} + |z|^{2(q-1)} + t^2)^{\frac{p}{2(q-1)}-1}\, t^2 \\
	& \quad +  C (s^{2(q-1)} + |z_0|^{2(q-1)} +|z|^{2(q-1)})^{\frac{\frac{p}{q-1}-2}{2}}|U_q(z_0)-U_q(z)|^2.
	\end{align*}
	
	Then using  \eqref{ieq2} we get 	
	\begin{align}
	g_{|z_0|^{q-1}}(t)&\leq  C (s^{2(q-1)} + |z|^{2(q-1)} + t^2)^{\frac{p}{2(q-1)}-1}\, t^2 \nonumber \\
	& \quad +  C(s^{2} + |z_0|^{2} +|z|^{2})^{\frac{p-2(q-1)}{2}} (s^2 + |z_0|^2 + |z|^2)^{q-2} |z_0-z|^2 \nonumber\\
	&\leq  C (s^{2(q-1)} + |z|^{2(q-1)} + t^2)^{\frac{p}{2(q-1)}-1}\, t^2  +  C (s^{2} + |z_0|^{2} +|z|^{2})^{\frac{p-2}{2}}  |z_0-z|^2 \nonumber\\
	&\leq  C g_{|z|^{q-1}}(t) + C |V(z_0)-V(z)|^2. \label{shift}
	\end{align}
	
	We now let $\xi_m\in \mathbb{R}^n$ be such that $U_q(\xi_m)=[U_q(\nabla v)]_{B_{R_m}}$.  Then applying \eqref{shift} with $z=\xi_m$ and \eqref{V-Uq} we find
	\begin{align*}
	&g_{|z_0|^{q-1}}\left(\fint_{B_{R}} |U_q(\nabla v)-U_q(z_0)|\right)\\
	&\leq C g_{|\xi_m|^{q-1}}\left(\fint_{B_{R}} |U_q(\nabla v)-U_q(z_0)|\right) + C |V(z_0)-V(\xi_m)|^2\\
	&\leq C g_{|\xi_m|^{q-1}}\left(\fint_{B_{R}} |U_q(\nabla v)-U_q(z_0)|\right) + C g_{|\xi_m|^{q-1}}(|U_q(z_0)-U_q(\xi_m)|) \\
	&\leq C g_{|\xi_m|^{q-1}}\left(\fint_{B_{R}} |U_q(\nabla v)-[U_q(\nabla v)]_{B_{R}}|\right) \\
	&\quad + C g_{|\xi_m|^{q-1}}(|[U_q(\nabla v)]_{B_{R}}-[U_q(\nabla v)]_{B_{R_m}}|).
	\end{align*}

	We next observe that 
	\begin{align*}
	|[U_q(\nabla v)]_{B_{R}}- [U_q(\nabla v)]_{B_{R_m}}|& \leq \sum_{k=-1}^{m-1} |[U_q(\nabla v)]_{B_{R_{k+1}}}- [U_q(\nabla v)]_{B_{R_{k}}}|\\
	&\leq \sum_{k=-1}^{m-1} \fint_{B_{R_{k+1}}} |U_q(\nabla v)- [U_q(\nabla v)]_{B_{R_{k}}}|\\
	&\leq  2^n \sum_{k=-1}^{m-1} \fint_{B_{R_{k}}} |U_q(\nabla v)- [U_q(\nabla v)]_{B_{R_{k}}}|.
	\end{align*}
	
	Thus by the monotonicity  of $g_{|\xi_m|^{q-1}}$  we get 
	\begin{align*}
	g_{|z_0|^{q-1}} &\left(\fint_{B_{R}} |U_q(\nabla v)-U_q(z_0)|\right) \\
	& \leq  C g_{|\xi_m|^{q-1}}\left(2^n\sum_{k=-1}^{m-1} \fint_{B_{R_{k}}} |U_q(\nabla v)- [U_q(\nabla v)]_{B_{R_{k}}}|\right).
	\end{align*}
	
	Now in view of \eqref{v-z0}, this yields
	\begin{align}
	\fint_{B_{R_m}} & |V(\nabla v) - [V(\nabla v)]_{B_{R_m}}|^2 \nonumber \\
	& \leq C 2^{-2m\sigma_1}  g_{|\xi_m|^{q-1}}\left(2^n\sum_{k=-1}^{m-1} \fint_{B_{R_{k}}} |U_q(\nabla v)- [U_q(\nabla v)]_{B_{R_{k}}}|\right).\label{Vandsum}
	\end{align}

	Let $\eta_m$ be such that $V(\eta_m)=[V(\nabla v)]_{B_{R_m}}$. Then by \eqref{V-Uq} we have 
	$$\fint_{B_{R_m}} g_{|\eta_m|^{q-1}}(|U_q(\nabla v) -U_q(\eta_m)|) \leq C \fint_{B_{R_m}}  |V(\nabla v) - [V(\nabla v)]_{B_{R_m}}|^2,$$
	which by Jensen's inequality  and the monotonicity of $g_{|\eta_m|^{q-1}}$  gives 
	\begin{equation}\label{gout}
	g_{|\eta_m|^{q-1}}\left( \frac 1 2 \fint_{B_{R_m}} |U_q(\nabla v) -[U_q(\nabla v)]_{B_{R_m}}| \right) \leq C \fint_{B_{R_m}}  |V(\nabla v) - [V(\nabla v)]_{B_{R_m}}|^2
	\end{equation}
	Combining \eqref{Vandsum} and \eqref{gout} we get 
	\begin{align}
	g_{|\eta_m|^{q-1}} &\left( \frac 1 2 \fint_{B_{R_m}}  |U_q(\nabla v) -[U_q(\nabla v)]_{B_{R_m}}| \right) \nonumber \\
	& \leq C 2^{-2m\sigma_1}  g_{|\xi_m|^{q-1}}\left(2^n\sum_{k=-1}^{m-1} \fint_{B_{R_{k}}} |U_q(\nabla v)- [U_q(\nabla v)]_{B_{R_{k}}}|\right).\label{gCg}
	\end{align}	
	Note that for any $\lambda \in (0,1)$ we have $$g_{|\xi_m|^{q-1}}(\lambda t) \geq \lambda^{\kappa} \, g_{|\xi_m|^{q-1}}( t), \quad {\rm where} \, \kappa=\max\left \{\frac{p}{q-1}, 2\right \}.$$  
	Thus \eqref{gCg} yields that 
	\begin{align}
	g_{|\eta_m|^{q-1}} &\left( \frac 1 2 \fint_{B_{R_m}}  |U_q(\nabla v) -[U_q(\nabla v)]_{B_{R_m}}| \right) \nonumber \\
	& \leq  g_{|\xi_m|^{q-1}}\left(2^n C^{\frac 1 \kappa} 2^{\frac{-2m\sigma_1}{\kappa}}  \sum_{k=-1}^{m-1} \fint_{B_{R_{k}}} |U_q(\nabla v)- [U_q(\nabla v)]_{B_{R_{k}}}|\right),\label{Vandsum2}
	\end{align}	
	provided $C 2^{-2m\sigma_1}<1$, i.e., provided $m$ is sufficiently large.\\
	We now apply the inverse function of $g_{|\eta_m|^{q-1}}$ to both sides of \eqref{Vandsum2} to arrive at 
	\begin{align*}
	\fint_{B_{R_m}}  |U_q(\nabla v) -[U_q(\nabla v)]_{B_{R_m}}| &  \leq  C 2^{\frac{-2m\sigma_1}{\kappa}}  \sum_{k=-1}^{m-1} \fint_{B_{R_{k}}} |U_q(\nabla v)- [U_q(\nabla v)]_{B_{R_{k}}}|\\
	& \leq C 2^{\frac{-2m\sigma_1}{\kappa}} (m+1)   \max_{-1\leq k<m} \fint_{B_{R_{k}}} |U_q(\nabla v)- [U_q(\nabla v)]_{B_{R_{k}}}|
	\end{align*}	
	for all sufficiently large $m$.\\
	Let $0<\alpha< \frac{2\sigma_1}{\kappa}$. From the above inequality we have 
	\begin{align*}
	R_m^{-\alpha} \fint_{B_{R_m}} & |U_q(\nabla v) -[U_q(\nabla v)]_{B_{R_m}}| \\
	& \leq C 2^{\frac{-2m\sigma_1}{\kappa}} (m+1)   \max_{-1\leq k<m}  R_m^{-\alpha} R_k^{\alpha}  R_k^{-\alpha}  \fint_{B_{R_{k}}} |U_q(\nabla v)- [U_q(\nabla v)]_{B_{R_{k}}}|\\
	& \leq C 2^{\frac{-2m\sigma_1}{\kappa} + m \alpha} (m+1)   \max_{-1\leq k<m}   R_k^{-\alpha}  \fint_{B_{R_{k}}} |U_q(\nabla v)- [U_q(\nabla v)]_{B_{R_{k}}}|\\
	& \leq \frac 1 2    \max_{-1\leq k<m}   R_k^{-\alpha}  \fint_{B_{R_{k}}} |U_q(\nabla v)- [U_q(\nabla v)]_{B_{R_{k}}}|,
	\end{align*}	
	provided $m\geq m_0$ where $m_0$ sufficiently large so that we have both   $C 2^{-2m_0\sigma_1}<1$ and $C 2^{\frac{-2m_0\sigma_1}{\kappa} + m_0 \alpha} (m_0+1)<\frac 1 2 $.
	This is possible since $\alpha<\frac{2\sigma_1}{\kappa}$.
	
	For any $\ell=2, 3, \dots$, we now apply the previous inequality with $m_0\leq m\leq \ell m_0$ to deduce that 
	\begin{align*}
	\max_{m_0\leq m\leq \ell m_0} R_m^{-\alpha} \fint_{B_{R_m}} & |U_q(\nabla v) -[U_q(\nabla v)]_{B_{R_m}}| \\
	& \leq \frac 1 2    \max_{-1\leq k<\ell m_0}   R_k^{-\alpha}  \fint_{B_{R_{k}}} |U_q(\nabla v)- [U_q(\nabla v)]_{B_{R_{k}}}|\\
	& \leq \frac 1 2    \max_{-1\leq k<m_0}   R_k^{-\alpha}  \fint_{B_{R_{k}}} |U_q(\nabla v)- [U_q(\nabla v)]_{B_{R_{k}}}|\\
	& +  \frac 1 2 \max_{m_0 \leq k<\ell m_0}   R_k^{-\alpha}  \fint_{B_{R_{k}}} |U_q(\nabla v)- [U_q(\nabla v)]_{B_{R_{k}}}|.
	\end{align*}	
	This gives 
	\begin{align*}
	\max_{m_0\leq m\leq \ell m_0} R_m^{-\alpha} \fint_{B_{R_m}} & |U_q(\nabla v) -[U_q(\nabla v)]_{B_{R_m}}| \\
	&\leq     \max_{-1\leq k<m_0}   R_k^{-\alpha}  \fint_{B_{R_{k}}} |U_q(\nabla v)- [U_q(\nabla v)]_{B_{R_{k}}}|\\
	& \leq C R^{-\alpha} \fint_{B_{R}} |U_q(\nabla v)- [U_q(\nabla v)]_{B_{R}}|,
	\end{align*}	
	which  completes the proof of Theorem \ref{hol-Du}.
\end{proof}	

 \section{Interior pointwise gradient estimates}  

The main goal of this section is to prove Theorem \ref{grad1}. We shall need some preliminary results for that purpose. 

Let $u\in W_{\rm loc}^{1,p}(\Omega)$ be a solution of \eqref{quasi-measure} and suppose that   $B_{2r}=B_{2r}(x_0)\subset\subset\Omega$.  We consider the unique solution $w\in  u+ W_{0}^{1,p}(B_{2r})$
to the  equation 
\begin{equation}
\label{111120146}\left\{ \begin{array}{rcl}
- \operatorname{div}\left( {A(x,\nabla w)} \right) &=& 0 \quad {\rm in} \quad B_{2r}, \\ 
w &=& u \quad {\rm on} \quad \partial B_{2r}.  
\end{array} \right.
\end{equation}

We first recall the following  version of interior Gehring's lemma that can be found  in \cite[Theorem 6.7]{Giu}.
\begin{lemma} Let $w$ be as in \eqref{111120146}.
	There exist  constants $\theta_1>p$ and $C>0$ depending only on $n,\Lambda$ such that the  estimate      
	\begin{equation}\label{111120148}
		\left(\fint_{B_{\rho/2}(y)}(|\nabla w|+s)^{\theta_1} dxdt\right)^{\frac{1}{\theta_1}}\leq C\left(\fint_{B_{\rho}(y)}(|\nabla w|+s)^{t} dx\right)^{\frac{1}{t}},
	\end{equation}
	holds for all  $B_{\rho}(y)\subset B_{2r}(x_0)$ and $t>0$.
\end{lemma}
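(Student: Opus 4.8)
The plan is to reduce \eqref{111120148} to the classical self-improving property of reverse Hölder inequalities, following the scheme of \cite[Theorem 6.7]{Giu}. The first step is a Caccioppoli-type energy estimate for $w$. Fix a ball $B_\rho(y)$ with $B_\rho(y)\subset B_{2r}(x_0)$, choose a cutoff $\eta\in C_0^\infty(B_\rho(y))$ with $\eta\equiv 1$ on $B_{\rho/2}(y)$, $0\le\eta\le 1$, $|\nabla\eta|\le C/\rho$, and set $c=[w]_{B_\rho(y)}$. Inserting the admissible test function $\varphi=\eta^p(w-c)$ into the weak formulation of $-\operatorname{div}(A(x,\nabla w))=0$, bounding $A(x,\nabla w)\cdot\nabla w$ from below via \eqref{condi2} (integrated along $t\mapsto t\nabla w$, with the contribution of $A(x,0)$ controlled by the growth bound in \eqref{condi1}), estimating the term carrying $\nabla\eta$ by $|A(x,\nabla w)|\le\Lambda(s^2+|\nabla w|^2)^{(p-1)/2}$, and absorbing the resulting $\int\eta^p(|\nabla w|+s)^p$ contribution through Young's inequality, one arrives at
\begin{equation*}
\fint_{B_{\rho/2}(y)}(|\nabla w|+s)^{p}\,dx\le C\fint_{B_\rho(y)}\Big(\frac{|w-c|}{\rho}\Big)^{p}\,dx+Cs^p
\end{equation*}
with $C=C(n,\Lambda)$.

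Next I would apply the Sobolev--Poincaré inequality with the subcritical exponent $p_\ast:=\tfrac{np}{n+p}<p$ and the choice $c=[w]_{B_\rho(y)}$, which gives $\big(\fint_{B_\rho(y)}|(w-c)/\rho|^{p}\big)^{1/p}\le C\big(\fint_{B_\rho(y)}|\nabla w|^{p_\ast}\big)^{1/p_\ast}$. Combining this with the energy estimate above yields the reverse Hölder inequality
\begin{equation*}
\Big(\fint_{B_{\rho/2}(y)}(|\nabla w|+s)^{p}\,dx\Big)^{1/p}\le C\Big(\fint_{B_\rho(y)}(|\nabla w|+s)^{p_\ast}\,dx\Big)^{1/p_\ast},
\end{equation*}
valid for every $B_\rho(y)\subset B_{2r}(x_0)$ with $C=C(n,\Lambda)$. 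Gehring's lemma then produces an exponent $\theta_1>p$, depending only on $n$ and $\Lambda$, together with the estimate \eqref{111120148} in the special case $t=p$ on the right-hand side.

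It remains to replace $p$ by an arbitrary $t>0$ on the right-hand side of \eqref{111120148}. For this I would use the standard interpolation--iteration device on concentric balls: write $(|\nabla w|+s)^{p}=(|\nabla w|+s)^{t\lambda}(|\nabla w|+s)^{\theta_1(1-\lambda)}$ with $\lambda\in(0,1)$ chosen so that $p=t\lambda+\theta_1(1-\lambda)$, apply Hölder's inequality, insert the bound already proved with exponent $\theta_1$ on a slightly enlarged ball, and absorb the $\theta_1$-term by Young's inequality; a routine iteration over a geometric sequence of radii (in the spirit of Giaquinta--Giusti) removes the dependence on the enlarged ball and delivers \eqref{111120148} for all $t>0$. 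I expect this last absorption--iteration step to be the only point requiring genuine care, since one must track the dependence of the constants on the radii so that the iteration converges; the Caccioppoli estimate and the invocation of Gehring's lemma are entirely standard.
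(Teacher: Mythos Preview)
Your proposal is correct and follows precisely the standard route that the paper invokes: the paper does not give an argument but simply cites \cite[Theorem~6.7]{Giu}, and your sketch (Caccioppoli estimate, Sobolev--Poincar\'e with the subcritical exponent $p_\ast=np/(n+p)$, Gehring's lemma for the self-improvement to $\theta_1>p$, and the interpolation--absorption iteration to pass from $t=p$ to arbitrary $t>0$) is exactly the proof behind that citation. There is nothing to add.
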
 
The following important  comparison  estimate can be found  in \cite[Lemma 2.2]{QH4}.
    \begin{lemma}\label{111120149}Let $w$ be as in \eqref{111120146} and  assume that $\frac{3n-2}{2n-1}<p\leq  2-\frac{1}{n}$.  Then it holds that  for any $\gamma_0\in\left(\frac{n}{2n-1},\frac{(p-1)n}{n-1}\right)$,
    	\begin{align*}
   & \left(	\fint_{B_{2r}}|\nabla u-\nabla w|^{\gamma_0}dx\right)^{\frac{1}{\gamma_0}}\\
   & \quad \leq C \left[\frac{|\mu|(B_{2r})}{r^{n-1}}\right]^{\frac{1}{p-1}}+C\frac{|\mu|(B_{2r})}{r^{n-1}}\left(	\fint_{B_{2r}}(|\nabla u|+s)^{\gamma_0}dx\right)^{\frac{2-p}{\gamma_0}}.\label{1111201410+}
    	\end{align*}
    	where $C$ is a constant only depending on $n,p,\Lambda,\gamma_0$.
\end{lemma}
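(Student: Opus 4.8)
The plan is to work with the difference $z:=u-w$, which belongs to $W_0^{1,p}(B_{2r})$ by the definition of $w$ and solves
$$-\operatorname{div}\bigl(A(x,\nabla u)-A(x,\nabla w)\bigr)=\mu\qquad\text{in }B_{2r}.$$
Exactly as \eqref{esz} and \eqref{Aonly} were derived from \eqref{A01}--\eqref{A02}, the assumptions \eqref{condi1}--\eqref{condi2} give the pointwise bounds
$$\bigl(A(x,\nabla u)-A(x,\nabla w)\bigr)\cdot(\nabla u-\nabla w)\geq c\,E^{\frac{p-2}{2}}|\nabla z|^{2},\qquad \bigl|A(x,\nabla u)-A(x,\nabla w)\bigr|\leq C\,E^{\frac{p-2}{2}}|\nabla z|,$$
where $E:=s^{2}+|\nabla u|^{2}+|\nabla w|^{2}$. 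The first step is a weighted Caccioppoli estimate: testing the difference equation against a bounded truncation-type function such as $\phi_{\theta}:=\operatorname{sgn}(z)\bigl(1-(1+|z|)^{-\theta}\bigr)$, $\theta\in(0,1)$ (or against $T_{k}(z)$ and working level by level), and using the first bound above, one obtains
$$\int_{B_{2r}}\frac{E^{\frac{p-2}{2}}|\nabla z|^{2}}{(1+|z|)^{1+\theta}}\,dx\leq\frac{C}{\theta}\,|\mu|(B_{2r}).$$
Since $u\in W^{1,p}_{\mathrm{loc}}$ and the flux $A(x,\nabla u)-A(x,\nabla w)$ lies in $L^{p'}(B_{2r})$, these test functions are admissible after the standard truncation-and-limit procedure.

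The second step converts this weighted $L^{2}$ control into an $L^{\gamma_{0}}$ bound on $\nabla z$ by a Hölder splitting. Writing
$$|\nabla z|^{\gamma_{0}}=\Bigl(\tfrac{E^{(p-2)/2}|\nabla z|^{2}}{(1+|z|)^{1+\theta}}\Bigr)^{\gamma_{0}/2}\Bigl(E^{\frac{(2-p)\gamma_{0}}{2(2-\gamma_{0})}}(1+|z|)^{\frac{(1+\theta)\gamma_{0}}{2-\gamma_{0}}}\Bigr)^{(2-\gamma_{0})/2}$$
and applying Hölder with exponents $\tfrac{2}{\gamma_{0}}$ and $\tfrac{2}{2-\gamma_{0}}$ — legitimate because $\gamma_{0}<\tfrac{n(p-1)}{n-1}\leq1$, so in particular $\gamma_{0}<2$ and $\gamma_{0}<p$ — the first factor is controlled by the weighted energy from Step 1, and it remains to estimate $\int_{B_{2r}}E^{\beta'}(1+|z|)^{\sigma}$ with $\beta'=\tfrac{(2-p)\gamma_{0}}{2(2-\gamma_{0})}$ and $\sigma=\tfrac{(1+\theta)\gamma_{0}}{2-\gamma_{0}}$. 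Here I would use $E\leq C(s^{2}+|\nabla u|^{2}+|\nabla z|^{2})$ to remove $\nabla w$, observe that $2\beta'<\gamma_{0}$ (equivalent to $\gamma_{0}<p$) so that $\int_{B_{2r}}(s+|\nabla u|)^{2\beta'}$ and $\int_{B_{2r}}|\nabla z|^{2\beta'}$ are each dominated by a power of $\int_{B_{2r}}(s+|\nabla u|)^{\gamma_{0}}$, respectively of $\int_{B_{2r}}|\nabla z|^{\gamma_{0}}$, and absorb the factor $(1+|z|)^{\sigma}$ by the Sobolev--Poincaré inequality for $z\in W_{0}^{1,p}(B_{2r})$ combined with a suitably small choice of $\theta$. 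It is exactly at this point that the exponent restriction is used: all the competing powers close up precisely when $\gamma_{0}\in\bigl(\tfrac{n}{2n-1},\tfrac{n(p-1)}{n-1}\bigr)$, that is, when $\tfrac{3n-2}{2n-1}<p\leq2-\tfrac1n$, since this is the range for which that interval is nonempty.

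Running the Boccardo--Gallouët iteration (by summing the weighted Caccioppoli bound over the dyadic super-level sets $\{2^{k}\leq|z|<2^{k+1}\}$) and combining it with the Hölder splitting above then yields a closed inequality for $X:=\int_{B_{2r}}|\nabla z|^{\gamma_{0}}$ in which $X$ reappears on the right only to a power $<1$; absorbing $X$ and restoring the averages and powers of $r$ produces exactly
$$\Bigl(\fint_{B_{2r}}|\nabla u-\nabla w|^{\gamma_{0}}\,dx\Bigr)^{1/\gamma_{0}}\leq C\Bigl[\frac{|\mu|(B_{2r})}{r^{n-1}}\Bigr]^{\frac{1}{p-1}}+C\,\frac{|\mu|(B_{2r})}{r^{n-1}}\Bigl(\fint_{B_{2r}}(|\nabla u|+s)^{\gamma_{0}}\,dx\Bigr)^{\frac{2-p}{\gamma_{0}}},$$
the exponent $\tfrac{1}{p-1}$ being the usual $p$-Laplacian (Boccardo--Gallouët) scaling and the second, linear-in-$\mu$ term recording the influence of the base gradient $\nabla u$ inside the weight $E^{(p-2)/2}$. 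I expect the heart of the proof to be this second step: unlike the classical case the linearized operator for $z$ carries the \emph{non-uniformly elliptic} weight $E^{(p-2)/2}$ (with $p-2<0$), so off-the-shelf Marcinkiewicz-type estimates for measure data are unavailable and the weight must be transported through the Hölder and Sobolev bookkeeping; it is the balance between the singular exponent $p-2$, the Sobolev exponent at level $\gamma_{0}$, and the decay of the super-level sets of $z$ that forces the restriction on $p$.
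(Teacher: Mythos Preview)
The paper does not prove this lemma in-line; it quotes \cite[Lemma~2.2]{QH4} and only remarks that the range of $\gamma_0$ is implicit in the proof there and that the case $s>0$ goes through unchanged. So there is no argument in the paper against which to match yours step by step.

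That said, your outline is the standard truncation/Boccardo--Gallou\"et comparison technique, and it is almost certainly what \cite{QH4} does: test the equation for $z=u-w$ with bounded truncation-type functions to obtain the weighted energy bound, then extract $L^{\gamma_0}$ control on $\nabla z$ via H\"older and level-set bookkeeping. Your algebraic splitting of $|\nabla z|^{\gamma_0}$ checks out, as does the observation $2\beta'<\gamma_0\Leftrightarrow\gamma_0<p$.

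The thin spot is exactly where you flag it. You invoke ``Sobolev--Poincar\'e for $z\in W_0^{1,p}(B_{2r})$'' to absorb $(1+|z|)^{\sigma}$, but the target is a closed inequality in $\int|\nabla z|^{\gamma_0}$ with $\gamma_0<1$, and a single $L^p$-Sobolev step feeds back the wrong quantity. What actually closes the loop is the level-set version you allude to only in passing: test with $T_k(z)$, use Sobolev on $T_k(z)$ to obtain decay of $|\{|z|>k\}|$, and sum over dyadic $k$. It is precisely in that Marcinkiewicz-type summation that \emph{both} endpoints $\tfrac{n}{2n-1}$ and $\tfrac{n(p-1)}{n-1}$ emerge---the upper one to allow absorption of the $|\nabla z|$-term appearing on the right through $E^{\beta'}$, the lower one to make the resulting geometric series converge. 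You assert that ``all the competing powers close up'' in that range without displaying the computation; since that computation \emph{is} the content of the lemma, it should be written out rather than described.
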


We remark that the range of $\gamma_0$ was not explicitly stated in \cite[Lemma 2.2]{QH4} but it can be easily seen from the proof of \cite[Lemma 2.2]{QH4}. Moreover, only the case $s=0$
was considered in \cite[Lemma 2.2]{QH4}, but the proof works also  in the case  $s>0$.


 We now let $v\in W_0^{1,p}(B_r(x_0))$ be the  unique solution of 
\begin{equation*}
\left\{ \begin{array}{rcl}
- \operatorname{div}\left( {A(x_0,\nabla v)} \right) &=& 0 \quad {\rm in} \quad B_{r}, \\ 
v &=& w \quad {\rm on} \quad \partial B_{r}.  
\end{array} \right.
\end{equation*}

 By standard regularity, we have for any $t>0$
\begin{align}\label{es29}
||\nabla v||_{L^\infty(B_{r/2})}\leq C \left(\fint_{B_r}|\nabla v|^{t}\right)^{1/t}.
\end{align}

We also have an estimate for the difference  $\nabla v-\nabla w$,
$$
\fint_{B_{r}}|\nabla v-\nabla w|^p dx \leq C \omega(r)^p \fint_{B_{r}}(|\nabla w| +s)^p dx.
$$
The  proof of this fact can be found in \cite[Equ. (4.35)]{Duzamin2}. Thus by \eqref{111120148} and H\"older's inequality, we get 
\begin{align}\label{es23}
\fint_{B_{r}}|\nabla v-\nabla w|^{\gamma_0} dx \leq C \omega(r)^{\gamma_0} \fint_{B_{2r}}(|\nabla w| +s)^{\gamma_0} dx.
\end{align}


 For a ball $B_\rho=B_\rho(x_0)\subset\Omega$, we  now define
$$
\mathbf{I}(\rho)=\mathbf{I}(x_0,\rho):=	\fint_{B_{\rho}}|U_{\gamma_0+1}(\nabla u)-\left[U_{\gamma_0+1}(\nabla u)\right]_{B_{\rho}}|dx.
$$
\begin{proposition}   Suppose  that $u\in W_{\rm loc}^{1,p}(\Omega)$ is a solution of \eqref{quasi-measure}. Then there exists $\alpha_0\in (0,1/2)$ such that 
 for any $\varepsilon\in (0,1)$ and $B_{2r}(x_0)\Subset\Omega$  we have 
	\begin{align}\nonumber
	\mathbf{I}(\varepsilon r)&\leq C\varepsilon^{\alpha_0}\mathbf{I}(r)+C_\varepsilon\left(\frac{|\mu|(B_{2r})}{r^{n-1}}\right)^{\frac{\gamma_0}{p-1}}\\ & ~~ +C_\varepsilon\left(\frac{|\mu|(B_{2r})}{r^{n-1}}\right)^{\gamma_0}\left(	\fint_{B_{2r}}(|\nabla u|+s)^{\gamma_0}\right)^{2-p} +	C_\epsilon\,  \omega(r)^{\gamma_0}\fint_{B_{2r}} (|\nabla u|+s)^{\gamma_0}\label{intA},
	\end{align}
	where $C_\varepsilon$ is a constant depending on $\varepsilon,n,p,\Lambda,\alpha$.
\end{proposition}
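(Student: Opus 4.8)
The plan is to chain three comparison steps: from $u$ to $w$ (solution of the homogeneous frozen-coefficient-in-$x$ problem on $B_{2r}$), from $w$ to $v$ (solution with $A(x_0,\cdot)$ on $B_r$), and then to exploit the decay of the excess for $v$ coming from Theorem~\ref{hol-Du} applied with $q=\gamma_0+1$. First I would record the elementary fact that for the functional $\mathbf{I}(\rho)$, which measures the mean oscillation of $U_{\gamma_0+1}(\nabla u)$, replacing the centered average $[U_{\gamma_0+1}(\nabla u)]_{B_\rho}$ by any fixed vector changes it by at most a factor $2$; hence $\mathbf{I}(\varepsilon r)\le 2\fint_{B_{\varepsilon r}}|U_{\gamma_0+1}(\nabla u)-U_{\gamma_0+1}(\nabla v)|+2\fint_{B_{\varepsilon r}}|U_{\gamma_0+1}(\nabla v)-[U_{\gamma_0+1}(\nabla v)]_{B_{\varepsilon r}}|$, and similarly one can insert $\nabla v$ in place of $\nabla u$ in the oscillation term at scale $r$ at the cost of a comparison error. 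The middle (oscillation-of-$v$) term is controlled by Theorem~\ref{hol-Du}: $\fint_{B_{\varepsilon r}}|U_q(\nabla v)-[\cdot]|\le C\varepsilon^{\sigma(q-1)}\fint_{B_{r/2}}|U_q(\nabla v)-[\cdot]|$, and since $q-1=\gamma_0$ this gives the gain $C\varepsilon^{\alpha_0}$ with $\alpha_0:=\sigma\gamma_0\in(0,1/2)$ (shrinking $\sigma$ if necessary).

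The next step is to convert the comparison errors $\fint |U_q(\nabla u)-U_q(\nabla w)|$ and $\fint |U_q(\nabla w)-U_q(\nabla v)|$ into the right-hand side quantities. Using inequality \eqref{ieq2} with $\gamma=(q-2)/2=(\gamma_0-1)/2\in(-1/2,0)$ one has the pointwise bound
$$|U_q(\xi_1)-U_q(\xi_2)|\le C\,(s^2+|\xi_1|^2+|\xi_2|^2)^{(\gamma_0-1)/2}|\xi_1-\xi_2|\le C\,|\xi_1-\xi_2|^{\gamma_0},$$
the last step because $(\gamma_0-1)/2<0$ forces $(s^2+|\xi_1|^2+|\xi_2|^2)^{(\gamma_0-1)/2}\le(|\xi_1-\xi_2|^2)^{(\gamma_0-1)/2}$ when $|\xi_1-\xi_2|$ is the relevant size — more carefully, $|U_q(\xi_1)-U_q(\xi_2)|\le C|\xi_1-\xi_2|^{\gamma_0}$ holds by a direct homogeneity argument since $U_q$ is $\gamma_0$-Hölder (this is standard for $q\in(1,2)$). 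Hence $\fint_{B}|U_q(\nabla u)-U_q(\nabla w)|\le C\big(\fint_B|\nabla u-\nabla w|^{\gamma_0}\big)$ — wait, more precisely $\le C\fint_B|\nabla u-\nabla w|^{\gamma_0}$ is dimensionally wrong; rather one uses $\fint_B|U_q(\nabla u)-U_q(\nabla w)|\le C\big(\fint_B|\nabla u-\nabla w|^{\gamma_0}dx\big)$ after noting the $\gamma_0$-Hölder bound gives $|U_q(\nabla u)-U_q(\nabla w)|\le C|\nabla u-\nabla w|^{\gamma_0}$ pointwise, so the averaged inequality follows directly. Then Lemma~\ref{111120149} bounds $\fint_{B_{2r}}|\nabla u-\nabla w|^{\gamma_0}$ by $C\big[\tfrac{|\mu|(B_{2r})}{r^{n-1}}\big]^{\gamma_0/(p-1)}+C\big(\tfrac{|\mu|(B_{2r})}{r^{n-1}}\big)^{\gamma_0}\big(\fint_{B_{2r}}(|\nabla u|+s)^{\gamma_0}\big)^{2-p}$, which are exactly the second and third terms on the right of \eqref{intA}; and \eqref{es23} bounds $\fint_{B_r}|\nabla v-\nabla w|^{\gamma_0}$ by $C\omega(r)^{\gamma_0}\fint_{B_{2r}}(|\nabla w|+s)^{\gamma_0}$, which after using \eqref{111120148} (Gehring) to pass from $|\nabla w|$ on $B_{2r}$ to $|\nabla u|$ on a slightly larger ball — or more directly the crude bound $\fint_{B_{2r}}(|\nabla w|+s)^{\gamma_0}\le C\fint_{B_{2r}}(|\nabla u|+s)^{\gamma_0}+C\fint_{B_{2r}}|\nabla u-\nabla w|^{\gamma_0}$, reabsorbing the second piece into the already-controlled measure terms since $\omega\le1$ — yields the fourth term $C_\varepsilon\omega(r)^{\gamma_0}\fint_{B_{2r}}(|\nabla u|+s)^{\gamma_0}$.

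The last thing to handle is the factor $\varepsilon^{-n}$ that appears when one localizes the $B_r$-comparison error down to the ball $B_{\varepsilon r}$: since $B_{\varepsilon r}\subset B_{r/2}$ for $\varepsilon<1/2$, one has $\fint_{B_{\varepsilon r}}|U_q(\nabla u)-U_q(\nabla v)|\le \varepsilon^{-n}\fint_{B_{r/2}}|U_q(\nabla u)-U_q(\nabla v)|$, and this $\varepsilon^{-n}$ is simply absorbed into the constant $C_\varepsilon$; that is why the error terms carry $C_\varepsilon$ while only the main oscillation term keeps the clean $C\varepsilon^{\alpha_0}$ gain. Finally, to control $\fint_{B_{r/2}}|U_q(\nabla v)-[\cdot]_{B_{r/2}}|$ (the right side of Theorem~\ref{hol-Du}) one bounds it by $2\fint_{B_{r/2}}|U_q(\nabla v)-U_q(\nabla w)|+2\fint_{B_{r/2}}|U_q(\nabla w)-[U_q(\nabla w)]_{B_{r/2}}|$ and the latter by $C\mathbf{I}(r)$ plus, again, the $w$–$u$ comparison error, using that $\fint_{B_{r/2}}|U_q(\nabla w)-[U_q(\nabla w)]_{B_{r/2}}|\le C\fint_{B_{r/2}}|U_q(\nabla u)-[U_q(\nabla u)]_{B_{r/2}}|+C\fint_{B_{r/2}}|U_q(\nabla u)-U_q(\nabla w)|\le C\mathbf{I}(r)+C\fint_{B_{2r}}|\nabla u-\nabla w|^{\gamma_0}$ — here one uses that passing from $B_{r/2}$ to $B_r$ in the oscillation functional costs only a dimensional constant since $\mathbf{I}$ is nearly monotone in the sense that $\fint_{B_{r/2}}|U_q(\nabla u)-[\cdot]_{B_{r/2}}|\le C\fint_{B_r}|U_q(\nabla u)-[\cdot]_{B_r}|=C\mathbf{I}(r)$. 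Collecting everything produces \eqref{intA}. The main obstacle is bookkeeping the Hölder exponent $\gamma_0$ correctly through the vector field $U_q$ and ensuring the non-linear measure term $\big(\tfrac{|\mu|(B_{2r})}{r^{n-1}}\big)^{\gamma_0}\big(\fint(|\nabla u|+s)^{\gamma_0}\big)^{2-p}$ comes out with exactly the right powers; this is precisely where the restriction \eqref{prestrict} and the admissible range of $\gamma_0$ enter, via Lemma~\ref{111120149}.
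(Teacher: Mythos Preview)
Your proposal is correct and follows essentially the same route as the paper: apply Theorem~\ref{hol-Du} with $q=\gamma_0+1$ to get the $\varepsilon^{\alpha_0}$ decay for the oscillation of $U_q(\nabla v)$, use the pointwise H\"older bound $|U_q(\xi_1)-U_q(\xi_2)|\le C|\xi_1-\xi_2|^{\gamma_0}$ (from \eqref{ieq2} with $\gamma=(\gamma_0-1)/2$) to convert comparison errors to $\fint|\nabla u-\nabla v|^{\gamma_0}$, then split $\nabla u-\nabla v$ through $\nabla w$ and invoke Lemma~\ref{111120149} and \eqref{es23}. The paper's presentation is slightly more streamlined in that it compares $v$ directly back to $u$ at scale $r$ (rather than routing through $w$ there) and works on $B_r$ rather than $B_{r/2}$, but these are cosmetic differences; the ingredients and structure are identical.
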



\begin{proof}
Since 	$\gamma_0\leq  1$, using \eqref{ieq2} we have 
$$|U_{\gamma_0+1}(\nabla u)-U_{\gamma_0+1}(\nabla v)|\leq C |\nabla u-\nabla v|^{\gamma_0}.$$

Thus by Theorem \ref{hol-Du}, we can find $\alpha_0\in (0,1/2)$ such that 
	\begin{align}\nonumber
&\fint_{B_{\varepsilon r}}|U_{\gamma_0+1}(\nabla u)-\left[U_{\gamma_0+1}(\nabla u)\right]_{B_{\varepsilon r}}|\\~~~\nonumber&\leq
C\fint_{B_{\varepsilon r}}|U_{\gamma_0+1}(\nabla v)-\left[U_{\gamma_0+1}(\nabla v)\right]_{B_{\varepsilon r}}|+C	\fint_{B_{\varepsilon r}} |\nabla u-\nabla v|^{\gamma_0}
		\\&\leq
	C	\varepsilon^{\alpha_0}\fint_{B_{r}}|U_{\gamma_0+1}(\nabla v)-\left[U_{\gamma_0+1}(\nabla v)\right]_{B_{r}}|+C\varepsilon^{-n}	\fint_{B_{r}} |\nabla u-\nabla v|^{\gamma_0}\nonumber
	\\&\leq
	C\varepsilon^{\alpha_0}\fint_{B_{r}}|U_{\gamma_0+1}(\nabla u)-\left[U_{\gamma_0+1}(\nabla u)\right]_{B_{r}}|+C\varepsilon^{-n}	\fint_{B_{r}} |\nabla u-\nabla v|^{\gamma_0}.\label{es24}
	\end{align}

Moreover,  by  \eqref{es23} and the fact that $|\omega(r)|\leq 1$, one has 
	\begin{align}\nonumber
	\fint_{B_{r}} |\nabla u-\nabla v|^{\gamma_0}
&\leq 	C\fint_{B_{r}} |\nabla u-\nabla w|^{\gamma_0}+	C\fint_{B_{r}} |\nabla w- \nabla v|^{\gamma_0}
	\\&\nonumber~~~\leq 	C\fint_{B_{2r}} |\nabla u-\nabla w|^{\gamma_0}+	C \omega(r)^{\gamma_0}\fint_{B_{2r}} (|\nabla w|+s)^{\gamma_0}
		\\&~~~\leq 	C\fint_{B_{2r}} |\nabla u- \nabla w|^{\gamma_0}+	C \omega(r)^{\gamma_0}\fint_{B_{2r}} (|\nabla u|+s)^{\gamma_0}\label{es25}.
	\end{align}

We then derive  from \eqref{es24} and \eqref{es25}  that 
	\begin{align}
\mathbf{I}(\varepsilon r)\leq C\varepsilon^{\alpha_0}\mathbf{I}(r)+C_\varepsilon \fint_{B_{2r}} |\nabla u-\nabla w|^{\gamma_0}+	C_\varepsilon \omega(r)^{\gamma_0}\fint_{B_{2r}} (|\nabla u|+s)^{\gamma_0}.\label{es27}
\end{align}

At this point we apply  Lemma \ref{111120149} to bound the second term on the right-hand side of   \eqref{es27}. This yields \eqref{intA} as desired.
\end{proof}

We are now we are ready to prove Theorem \ref{grad1}.

\noindent \begin{proof}[Proof of Theorem \ref{grad1}] We shall prove \eqref{Wpointwise} at $x=x_0$ and $B_R(x_0)\subset\Omega$.
	  Let  $U(x):= U_{\gamma_0+1}(\nabla u(x))$ and choose $\varepsilon <1/4$ small enough so that $
	C\varepsilon ^{\alpha_0}\leq \frac{1}{4}$, where $C$ is the constant in \eqref{intA}.

	Set $R_j=\varepsilon^{j} R$, $B_j:=B_{2R_j}(x_0), \mathbf{I}_j=\mathbf{I}(R_j)$ and $T_j:=\fint_{B_{j}} (|\nabla u| +s)^{\gamma_0} dx$. Applying \eqref{intA}  yields 
$$
	\mathbf{I}_{j+1}\leq \frac{1}{4}\mathbf{I}_{j}+C\left( \frac{|\mu|(B_{j})}{R_{j}^{n-1}}\right)^{\frac{\gamma_0}{p-1}} +C\left(\frac{|\mu|(B_{j})}{R_{j}^{n-1}}\right)^{\gamma_0} T_j^{2-p}+	C \omega(R_j)^{\gamma_0}T_j.
$$
	
	Summing this up over $j\in \{j_0,j_0+1,2,...,m-1\}$, we obtain 
	\begin{align}\label{z1}
	\sum_{j=j_0}^{m}\mathbf{I}_{j}&\leq C\, \mathbf{I}_{j_0}+C\sum_{j=j_0}^{m-1}\left(\frac{|\mu|(B_{j})}{R_{j}^{n-1}}\right)^{\frac{\gamma_0}{p-1}} \nonumber\\
	&\quad  +C\sum_{j=j_0}^{m-1}\left(\frac{|\mu|(B_{j})}{R_{j}^{n-1}}\right)^{\gamma_0}T_j^{2-p}+	C\sum_{j=j_0}^{m-1} \omega(R_j)^{\gamma_0} T_j.
	\end{align}
	
	Since $$
	\sum_{j=j_0}^{m}\mathbf{I}_{j}\geq C \sum_{j=j_0}^{m}|\left[U\right]_{B_{j+1}}-\left[U\right]_{B_{j}}|\geq C |\left[U\right]_{B_{m+1}}-\left[U\right]_{B_{j_0}}|,$$
	we see that  \eqref{z1} implies 
		\begin{align}\nonumber
|\left[U\right]_{B_{m+1}}| & +\sum_{j=j_0}^{m}\mathbf{I}_{j} \leq  C\, \mathbf{I}_{j_0}+|\left[U\right]_{B_{j_0}}|+C\sum_{j=j_0}^{m-1}\left(\frac{|\mu|(B_{j})}{R_{j}^{n-1}}\right)^{\frac{\gamma_0}{p-1}}\\&+C\sum_{j=j_0}^{m-1}\left(\frac{|\mu|(B_{j})}{R_{j}^{n-1}}\right)^{\gamma_0}T_j^{2-p}+	C\sum_{j=j_0}^{m-1}\omega(R_j)^{\gamma_0} T_j.\label{es1}
	\end{align}

By \eqref{dini}, there is $j_0=j_0(\varepsilon,C,D)>1$   large enough such that 
	\begin{align}\label{co3}
	\varepsilon^{-n}C\sum_{j=j_0}^{\infty}\omega(R_j)^{\gamma_0}\leq \frac{1}{10},
	\end{align}
	where $C$ is the constant in \eqref{es1}. 	
	
	Note that 
	\begin{align}\label{es0}
	\sum_{j=j_0}^{m}\left(\frac{|\mu|(B_{j})}{R_{j}^{n-1}}\right)^{\gamma_0}\leq C\, \int_{0}^{2R_{j_0-1}}\left(\frac{|\mu|(B_{\rho}(x_0))}{\rho^{n-1}}\right)^{\gamma_0}\frac{d\rho}{\rho},
	\end{align} 
and since $p<2$ we also have 
\begin{align}\label{es00}
\sum_{j=j_0}^{m}\left(\frac{|\mu|(B_{j})}{ R_{j}^{n-1}}\right)^{\frac{\gamma_0}{p-1}}\leq C \left(\int_{0}^{2 R_{j_0-1}}\left(\frac{|\mu|(B_{\rho}(x_0))}{\rho^{n-1}}\right)^{\gamma_0}\frac{d\rho}{\rho}\right)^{\frac{1}{p-1}}.
\end{align} 	
	
	Moreover, since $\gamma_0\leq 1$ we have 	$ |U|\leq |\nabla u|^{\gamma_0} $, and thus to prove \eqref{Wpointwise} at $x=x_0$
	  it is enough to show that 
	\begin{align}\label{maines}
	|U(x_0)|\leq C T_{j_0}+ C   \left(\int_{0}^{2 R_{j_0-1}}\left(\frac{|\mu|(B_{\rho}(x_0))}{\rho^{n-1}}\right)^{\gamma_0}\frac{d\rho}{\rho}\right)^{\frac{1}{p-1}}.
	\end{align}
	
	To prove \eqref{maines} we consider the following possibilities:

\medskip
	
\noindent 	\underline{Case 1:}  If $
	|U(x_0)|\leq  T_{j_0}$,  then  \eqref{maines} trivially follows.
	
\medskip
	\noindent 	\underline{Case 2:} If 
		\begin{equation}\label{co1}
	T_j\leq |U(x_0)|~~\forall j_0\leq j\leq j_1~~\text{and }  |U(x_0)|<T_{j_1+1},
	\end{equation}
	then since $\gamma_0\leq 1$ we have 
	\begin{align*}
 |U(x_0)|& <	\fint_{B_{j_1 +1}} (|\nabla u| +s)^{\gamma_0}dx \\
 &\leq \fint_{B_{j_1 +1}} |\nabla u|^{\gamma_0} dx + s^{\gamma_0}
 \leq \mathbf{I}_{j_1+ 1}+ |\left[U\right]_{B_{j_1 +1}}| + s^{\gamma_0}\\
 & \leq  \varepsilon^{-n}\mathbf{I}_{j_1}+ |\left[U\right]_{B_{j_1 +1}}| + s^{\gamma_0}.
	\end{align*}

Now applying \eqref{es1} with  $m=j_1$ and using  \eqref{es0}, \eqref{es00} and \eqref{co1}
we get 
		\begin{align*}
	|U(x_0)|&< C_\epsilon\, \mathbf{I}_{j_0}+C_\epsilon\, |\left[U\right]_{B_{j_0}}|+C_\epsilon\, \left[\int_{0}^{2 R_{j_0-1}}\left(\frac{|\mu|(B_{\rho}(x_0))}{\rho^{n-1}}\right)^{\gamma_0}\frac{d\rho}{\rho}\right]^{\frac{1}{p-1}}\\&~~+C_\epsilon  \left[\int_{0}^{2 R_{j_0-1}}\left(\frac{|\mu|(B_{\rho}(x_0))}{\rho^{n-1}}\right)^{\gamma_0}\frac{d\rho}{\rho}\right] |U(x_0)|^{2-p}\\&~~+	\varepsilon^{-n}C\sum_{j=j_0}^{m-1} \omega(R_j)^{\gamma_0} |U(x_0)|  + s^{\gamma_0}.
	\end{align*}
	
	Hence using \eqref{co3} and Young's inequality we find 
	\begin{align*}
	|U(x_0)|&  \leq C_\varepsilon\mathbf{I}_{j_0}+C_\varepsilon|\left[U\right]_{B_{j_0}}|+C_\varepsilon \left(\int_{0}^{2 R_{j_0-1}}\left(\frac{|\mu|(B_{\rho}(x_0))}{\rho^{n-1}}\right)^{\gamma_0}\frac{d\rho}{\rho}\right)^{\frac{1}{p-1}}\\
	&~~ +	\frac{1}{5}|U(x_0)| + s^{\gamma_0}.
	\end{align*}
	
	This implies \eqref{maines} as desired.

\medskip
	
	\noindent 	\underline{Case 3:} If $
	T_j\leq |U(x_0)|$ for any $j\geq j_0$, then from \eqref{es1} we have for any $m> j_0$,
		\begin{align*}\nonumber
&|\left[U\right]_{B_{m+1}}|\leq  C\mathbf{I}_{j_0}+|\left[U\right]_{B_{j_0}}|+C\left[\int_{0}^{2 R_{j_0-1}}\left(\frac{|\mu|(B_{\rho}(x_0))}{\rho^{n-1}}\right)^{\gamma_0}\frac{d\rho}{\rho}\right]^{\frac{1}{p-1}}\\&\nonumber~~+C\left[\int_{0}^{2 R_{j_0-1}}\left(\frac{|\mu|(B_{\rho}(x_0))}{\rho^{n-1}}\right)^{\gamma_0}\frac{d\rho}{\rho}\right]|U(x_0)|^{2-p}+	C\sum_{j=j_0}^{m-1} \omega(R_j)^{\gamma_0}|U(x_0)|\\& \leq C\mathbf{I}_{j_0}+|\left[U\right]_{B_{j_0}}|+C\left[\int_{0}^{2 R_{j_0-1}}\left(\frac{|\mu|(B_{\rho}(x_0))}{\rho^{n-1}}\right)^{\gamma_0}\frac{d\rho}{\rho}\right]^{\frac{1}{p-1}}\\&+C\left[\int_{0}^{2 R_{j_0-1}}\left(\frac{|\mu|(B_{\rho}(x_0))}{\rho^{n-1}}\right)^{\gamma_0}\frac{d\rho}{\rho}\right]|U(x_0)|^{2-p}+	\frac{1}{10}|U(x_0)|.
	\end{align*}
	Here we used \eqref{co3} in the last inequality. 
	Letting $m\to\infty$ we get 
	\begin{align*}
&	|U(x_0)|\leq C\mathbf{I}_{j_0}+|[U]_{B_{j_0}}|+C\left[\int_{0}^{2 R_{j_0-1}}\left(\frac{|\mu|(B_{\rho}(x_0))}{\rho^{n-1}}\right)^{\gamma_0}\frac{d\rho}{\rho}\right]^{\frac{1}{p-1}}\\&+C\left[\int_{0}^{2 R_{j_0-1}}\left(\frac{|\mu|(B_{\rho}(x_0))}{\rho^{n-1}}\right)^{\gamma_0}\frac{d\rho}{\rho}\right]|U(x_0)|^{2-p}+	\frac{1}{10}|U(x_0)|.
	\end{align*}

Then using Young's inequality we deduce \eqref{maines}. The proof is complete.
\end{proof}
 
 \section{Global pointwise gradient estimates}
We shall prove Theorem \ref{boundary} in this section. As discussed earlier,  by a standard approximation we may assume that $u\in W_0^{1,p}(\Omega)$ is a solution of 
\eqref{quasi-measure}. We shall prove \eqref{ieq1} for any  $x=x_0\in\Omega$,  a Lebesgue point of $(s^2 +|\nabla u|^2)^{\frac{\gamma_0-1}{2}}\nabla u$. 

By Theorem \ref{grad1} we have 
\begin{align}
|\nabla u(x_0)| &\leq C\, \Big[{\bf P}^{2 {\rm diam}(\Omega)}_{\gamma_0}(|\mu|)(x_0)\Big]^{\frac{1}{\gamma_0 (p-1)}} \nonumber \\
& \qquad  + C\, \Big(\fint_{B_{d(x_0)}(x_0)}|\nabla u(y)|^{\gamma_0} dy\Big)^{\frac{1}{\gamma_0}} + C\, s. \label{Wwithtail}
\end{align}

Recall that by a standard estimate (see, e.g., the proof of \cite[Lemma 2.2]{QH4}), we have 
\begin{equation}\label{int0}
\int_{\Omega}|\nabla u|^{\gamma_0}\leq C\left( {\rm diam}(\Omega)\right)^{n-\frac{\gamma_0(n-1)}{p-1}}|\mu|(\Omega)^{\frac{\gamma_0}{p-1}} + C\,{\rm diam}(\Omega)^{n} s^{\gamma_0}.
\end{equation}

Thus we may assume that $d(x_0)\leq r_1/2$ for any sufficiently small $r_1>0$. Recall that $\Omega$ is a $(\delta,R_0)$-Reifenberg flat domain for some $R_0>0$. 
Therefore, we may further  assume that $ d(x_0)\leq  r_1/2\leq R_0/100 \leq {\rm diam}(\Omega)/1000$.  

Let  $x_1\in \partial\Omega$ be such that $|x_1-x_0|=d(x_0)$. For any $r\in (0, r_1]$
we consider the unique solution $
w\in W_{0}^{1,p}(\Omega_{2r}(x_1))+u$
to the following equation 
\begin{equation}
\label{111120146*}\left\{ \begin{array}{rcl}
- \operatorname{div}\left( {A(x,\nabla w)} \right) &=& 0 \quad {\rm in}~~ \Omega_{2 r}(x_1), \\ 
 w &=& u \quad {\rm on}~~\partial \Omega_{2r}(x_1), 
\end{array} \right.
\end{equation}
where we write  $\Omega_r(x_1)=\Omega\cap B_r(x_1)$.

We have the following boundary counterpart of Lemma \ref{111120149} (see \cite[Lemma 2.5]{QH4}).
\begin{lemma}\label{111120149+} Let $w$ be as in \eqref{111120146*} and $\gamma_0$ be as in Lemma \ref{111120149}. Then it holds that 
	\begin{align*} 
	& \left(	\fint_{B_{2r}(x_1)}  |\nabla u-\nabla w|^{\gamma_0}dx\right)^{\frac{1}{\gamma_0}} \\
	& \quad \leq C \left[\frac{|\mu|(B_{2r}(x_1))}{r^{n-1}}\right]^{\frac{1}{p-1}}  +C\frac{|\mu|(B_{2r}(x_1))}{r^{n-1}}\left(	\fint_{B_{2r}(x_1)} (|\nabla u|+s)^{\gamma_0}dx\right)^{\frac{2-p}{\gamma_0}}.
	\end{align*}
\end{lemma}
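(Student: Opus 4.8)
The plan is to run the interior argument of \cite[Lemma 2.2]{QH4} (i.e.\ Lemma \ref{111120149}) on the half-ball $\Omega_{2r}(x_1)=\Omega\cap B_{2r}(x_1)$, using the Reifenberg flatness of $\Omega$ to supply the two geometric ingredients that are automatic in the interior case: a uniform measure-density bound and a uniform Sobolev--Poincar\'e inequality. First I would extend $u$ and $w$ by zero outside $\Omega$; since $u\in W^{1,p}_0(\Omega)$ and $w\in u+W^{1,p}_0(\Omega_{2r}(x_1))$, the difference $z:=u-w$ lies in $W^{1,p}_0(\Omega_{2r}(x_1))$, and after the zero extension $\nabla z$ is supported in $\Omega_{2r}(x_1)$, so every average $\fint_{B_{2r}(x_1)}|\nabla z|^{\gamma_0}$ equals $|B_{2r}|^{-1}\int_{\Omega_{2r}(x_1)}|\nabla z|^{\gamma_0}$. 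Because $\Omega$ is $(\delta,R_0)$-Reifenberg flat with $\delta$ small and $r\le r_1$ small, one has, uniformly in $x_1\in\partial\Omega$ and $r\le r_1$, the density bounds $c\,|B_{2r}(x_1)|\le|\Omega_{2r}(x_1)|\le|B_{2r}(x_1)|$ together with the Sobolev--Poincar\'e inequality $\big(\fint_{\Omega_{2r}(x_1)}|\varphi|^{\frac{n}{n-1}}\big)^{\frac{n-1}{n}}\le C\,r\fint_{\Omega_{2r}(x_1)}|\nabla\varphi|$ for $\varphi\in W^{1,1}_0(\Omega_{2r}(x_1))$, and the boundary analogue of the Gehring--type reverse H\"older inequality \eqref{111120148} for $w$. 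These let me carry averages over $B_{2r}(x_1)$ and $\Omega_{2r}(x_1)$ interchangeably at the price of dimensional constants.

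Next I would derive the core weighted energy estimate. Subtracting the weak formulations of \eqref{111120146*} and \eqref{quasi-measure},
\[
\int \big(A(x,\nabla u)-A(x,\nabla w)\big)\cdot\nabla\phi\,dx=\int\phi\,d\mu
\]
for all admissible $\phi$. Since $p\le 2-\tfrac1n$, $z$ need not be bounded, so I test with the truncations $T_k(z)\in W^{1,p}_0(\Omega_{2r}(x_1))$ and invoke the strict monotonicity coming from \eqref{condi2}, namely $\big(A(x,\xi)-A(x,\eta)\big)\cdot(\xi-\eta)\ge c\,(s^2+|\xi|^2+|\eta|^2)^{\frac{p-2}{2}}|\xi-\eta|^2$, to obtain
\[
\int_{\{|z|<k\}\cap\Omega_{2r}(x_1)}(s^2+|\nabla u|^2+|\nabla w|^2)^{\frac{p-2}{2}}|\nabla z|^2\,dx\le C\,k\,|\mu|(B_{2r}(x_1)).
\]

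From this I would extract the two terms by H\"older's inequality with exponents $\tfrac{2}{\gamma_0}$ and $\tfrac{2}{2-\gamma_0}$ (legitimate since $\gamma_0\le1<2$): writing $|\nabla z|^{\gamma_0}$ as the product of $\big[(s^2+|\nabla u|^2+|\nabla w|^2)^{\frac{p-2}{2}}|\nabla z|^2\big]^{\gamma_0/2}$ and $(s^2+|\nabla u|^2+|\nabla w|^2)^{\frac{(2-p)\gamma_0}{4}}$, the first factor is controlled by the displayed energy bound and the second, with exponent $\beta=\tfrac{\gamma_0(2-p)}{2(2-\gamma_0)}<\tfrac{\gamma_0}{2}$, is handled by Jensen's inequality and the energy comparison $\fint_{B_{2r}(x_1)}(|\nabla w|+s)^{\gamma_0}\le C\fint_{B_{2r}(x_1)}(|\nabla u|+s)^{\gamma_0}$ (via the boundary reverse H\"older estimate for $w$ and $|\nabla w|\le|\nabla z|+|\nabla u|$, absorbing the resulting $|\nabla z|$-contribution by Young's inequality). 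This produces, for a single level $k$, a bound of the form $(Ck|\mu|(B_{2r}(x_1)))^{\gamma_0/2}$ times $r^{n(2-\gamma_0)/2}$ times $\big(\fint_{B_{2r}(x_1)}(|\nabla u|+s)^{\gamma_0}\big)^{(2-p)/2}$. A Boccardo--Gallou\"et layer-cake iteration over the dyadic levels $\{|z|\simeq 2^j\}$ — using the weighted energy estimate and the uniform Sobolev--Poincar\'e inequality to control $|\{|z|\ge 2^j\}\cap\Omega_{2r}(x_1)|$ and choosing $k$ optimally — then yields
\[
\fint_{B_{2r}(x_1)}|\nabla z|^{\gamma_0}dx\le C\Big(\tfrac{|\mu|(B_{2r}(x_1))}{r^{n-1}}\Big)^{\frac{\gamma_0}{p-1}}+C\Big(\tfrac{|\mu|(B_{2r}(x_1))}{r^{n-1}}\Big)^{\gamma_0}\Big(\fint_{B_{2r}(x_1)}(|\nabla u|+s)^{\gamma_0}dx\Big)^{2-p},
\]
and raising to the power $1/\gamma_0$ gives the asserted estimate. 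It is exactly here that the hypotheses enter: the interval $\big(\tfrac{n}{2n-1},\tfrac{n(p-1)}{n-1}\big)$ for $\gamma_0$ is nonempty precisely when $p>\tfrac{3n-2}{2n-1}$, and within it the upper bound $\gamma_0<\tfrac{n(p-1)}{n-1}$ matches the Marcinkiewicz exponent delivered by the iteration while the lower bound keeps $\beta>0$ and the exponent arithmetic consistent.

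The main obstacle is not any single estimate but the \emph{uniformity} of the auxiliary tools: the measure-density bound, the Sobolev--Poincar\'e inequality for functions vanishing on $\partial\Omega\cap B_{2r}(x_1)$, and the boundary reverse H\"older (Gehring) inequality for $w$ must all hold with constants independent of $x_1\in\partial\Omega$ and of the scale $r\le r_1$, which is what forces $\delta$ to be chosen small in terms of $n,p$ only; one must also verify that the truncations $T_k(z)$ remain admissible test functions for \eqref{111120146*} up to the boundary portion of $\Omega_{2r}(x_1)$. Once these uniform facts are in place, the remainder is a scale-invariant transcription of the interior proof in \cite[Lemma 2.2]{QH4}.
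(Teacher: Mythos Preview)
The paper does not give a proof of this lemma; it simply cites \cite[Lemma 2.5]{QH4}. Your outline of the truncation--monotonicity--Boccardo--Gallou\"et iteration is the right skeleton and matches the method behind that reference. However, you have introduced an unnecessary (and, as stated, unjustified) dependence on Reifenberg flatness. Notice that the lemma as stated carries \emph{no} hypothesis on $\partial\Omega$; Reifenberg flatness enters the paper only later, in Lemma~\ref{rei-es}.

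The reason the boundary case is no harder than the interior one is the point you yourself make at the start and then fail to exploit: since $z:=u-w\in W^{1,p}_0(\Omega_{2r}(x_1))$, its zero extension lies in $W^{1,p}_0(B_{2r}(x_1))$. Hence every inequality you need --- Sobolev--Poincar\'e, the level-set estimates, the layer-cake iteration --- can be run on the \emph{full ball} $B_{2r}(x_1)$ with universal constants, exactly as in the interior proof. There is no need for a measure-density lower bound on $\Omega_{2r}(x_1)$ or for a Sobolev--Poincar\'e inequality with averages over $\Omega_{2r}(x_1)$; those would indeed require information about $\partial\Omega$, but the averages in the statement are already over $B_{2r}(x_1)$. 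Likewise, a boundary Gehring/reverse H\"older inequality for $w$ is not required: the term involving $|\nabla w|$ that appears after your H\"older splitting can be handled by writing $|\nabla w|\le|\nabla u|+|\nabla z|$ and absorbing the $|\nabla z|$ contribution, or more directly by organizing the iteration so that only $|\nabla u|+s$ and the weighted energy of $z$ ever appear. In short, once you carry all integrals over $B_{2r}(x_1)$ via the zero extension, the proof is a verbatim transcription of \cite[Lemma 2.2]{QH4}, and no regularity of $\partial\Omega$ is invoked.
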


Next, we let $v\in w+ W_0^{1,p}(\Omega_{r}(x_1))$ be the unique solution of 
\begin{equation*}
\label{eqfv}\left\{ \begin{array}{rcl}
- \operatorname{div}\left( {A(x_1,\nabla v)} \right) &=& 0 \quad {\rm in} ~~\Omega_{r}(x_1), \\ 
v &=& w\quad  {\rm on}~~\partial \Omega_{r}(x_1). 
\end{array} \right.
\end{equation*}

In what follows, we shall tacitly extend $u$ by zero to $\mathbb{R}^n\setminus \Omega$. Then extend $w$ by $u$ to
$\mathbb{R}^n\setminus \Omega_{2r}(x_1)$ and $v$ by $w$ to
$\mathbb{R}^n\setminus \Omega_{r}(x_1)$.
As in \eqref{es23}, we also have an estimate for the difference $\nabla v- \nabla w:$
 \begin{align}\label{es-1}
\fint_{B_{r}(x_1)}|\nabla v-\nabla w|^{\gamma_0} dx \leq C \omega(r)^{\gamma_0} \fint_{B_{2r}(x_1)}(|\nabla w| +s)^{\gamma_0} dx.
\end{align}

We will need the following boundary counterpart of \eqref{es29}. But here, due to the possible irregularity of $\Omega$, we only have  $L^q$-estimates for the gradient of $v$ for any large exponent $q<+\infty$. We shall use the idea from   \cite{55Ph2} to obtain such a result.

\begin{lemma} \label{rei-es} Let $q>p$ and $x_1\in\partial\Omega$, $0<r\leq r_1\leq R_0/50$, and $v$ be as above. There exists $\delta=\delta(q)>0$ such that if $\Omega$ is a  $(\delta,R_0)$-Reifenberg flat domain then
	\begin{equation}\label{z8}
	\left(	\fint_{B_{r/800}(x_1)}|\nabla v|^q\right)^{1/q}\leq  C\left(	\fint_{B_{r}(x_1)}\left(|\nabla v|+s\right)^{\gamma_0}\right)^{\frac{1}{\gamma_0}}.
	\end{equation}
Here the constant $C$ does not depend on $r$.	In particular, for any $\varepsilon\in (0,1/800)$,
$$
		\fint_{B_{\varepsilon r}(x_1)}|\nabla v|^{\gamma_0}\leq  C\varepsilon^{-\frac{\gamma_0n}{q}}	\fint_{B_{r}(x_1)}\left(|\nabla v|+s\right)^{\gamma_0}.
$$
\end{lemma}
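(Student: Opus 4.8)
\textbf{Proof proposal for Lemma~\ref{rei-es}.}
The plan is to establish the reverse-H\"older/sup estimate \eqref{z8} by a boundary $W^{1,q}$-regularity argument for the frozen-coefficient equation $\operatorname{div}(A(x_1,\nabla v))=0$ on the Reifenberg-flat piece $\Omega_r(x_1)$, and then deduce the averaged consequence by a trivial covering/scaling argument. First I would rescale to a unit ball: set $\tilde v(y)=v(x_1+ry)/r$ on $\Omega_1=(\Omega-x_1)/r\cap B_1$, which solves the same type of equation with the same structure constants, so it suffices to prove \eqref{z8} with $r=1$ and a constant independent of $r$ (the homogeneity of $A(x_1,\cdot)$ in $\xi$, degree $p-1$, makes the estimate scale-invariant after also rescaling $s$). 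Because $\Omega$ is $(\delta,R_0)$-Reifenberg flat with $r\le R_0/50$, the domain $\Omega_1$ is squeezed between two half-balls with slab width $\delta$, so it is an NTA/$(\delta,1)$-flat domain and the flatness parameter can be taken as small as we wish by choosing $\delta$ small.

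The core is an interior-and-boundary $W^{1,q}$ estimate for $\nabla v$. I would follow the scheme of \cite{55Ph2} (Caffarelli–Peral / maximal-function approach): combine the interior Lipschitz estimate for $\operatorname{div}(A(x_1,\nabla\cdot))=0$ in balls not touching $\partial\Omega$ with a boundary Lipschitz (or at least $L^\infty$-gradient) estimate for the corresponding problem on a flat half-ball, transfer the latter to $\Omega_1$ via Reifenberg flatness and a comparison argument estimating $\fint|\nabla v-\nabla V|^p$ where $V$ solves the problem on the idealized half-ball, and then run the Vitali-covering / good-$\lambda$ inequality to upgrade the $L^p$ bound on $\nabla v$ to an $L^q$ bound for any prescribed $q<\infty$, at the cost of taking $\delta=\delta(q)$ small. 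This yields
$$
\left(\fint_{B_{1/400}(x_1)}|\nabla v|^q\right)^{1/q}\le C\left(\fint_{B_{1/2}(x_1)}(|\nabla v|+s)^p\right)^{1/p},
$$
and then self-improvement of the right-hand exponent down to $\gamma_0$ is obtained from the Gehring-type reverse H\"older inequality satisfied by $\nabla v$ (a boundary analogue of Lemma~\ref{111120148}, again available because $v=0$ on the flat part of $\partial\Omega_1$ so the Sobolev–Poincar\'e inequality applies to the zero extension), giving $\big(\fint_{B_{1/2}}(|\nabla v|+s)^p\big)^{1/p}\le C\big(\fint_{B_1}(|\nabla v|+s)^{\gamma_0}\big)^{1/\gamma_0}$. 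Undoing the scaling restores the factors of $r$ and $s$ and gives \eqref{z8} with $r/800$ in place of $r/400$, which is harmless.

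For the ``in particular'' statement, once \eqref{z8} holds I would simply write, for $\varepsilon\in(0,1/800)$,
$$
\fint_{B_{\varepsilon r}(x_1)}|\nabla v|^{\gamma_0}
\le \left(\fint_{B_{\varepsilon r}(x_1)}|\nabla v|^{q}\right)^{\gamma_0/q}
\le C\,\varepsilon^{-\gamma_0 n/q}\left(\fint_{B_{r/800}(x_1)}|\nabla v|^{q}\right)^{\gamma_0/q}
\le C\,\varepsilon^{-\gamma_0 n/q}\fint_{B_r(x_1)}(|\nabla v|+s)^{\gamma_0},
$$
using Jensen in the first step, the inclusion $B_{\varepsilon r}\subset B_{r/800}$ together with $|B_{r/800}|/|B_{\varepsilon r}|=(800\varepsilon)^{-n}\le C\varepsilon^{-n}$ in the second, and \eqref{z8} in the third. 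I expect the main obstacle to be the boundary comparison/perturbation step: one must carefully control $\fint_{\Omega_\rho}|\nabla v-\nabla V|^p$ where $V$ solves the frozen equation on the true half-ball with matching boundary data, using only Reifenberg flatness (so no smoothness of $\partial\Omega$), and ensure all constants there are independent of $r$ and of $s>0$; this is where the restriction $\delta=\delta(q)$ enters and where the degeneracy/singularity ($1<p<2$) of $A(x_1,\cdot)$ must be handled via the $V$-function monotonicity inequalities \eqref{ieq1}--\eqref{ieq2} rather than linear energy estimates. The remaining ingredients — interior Lipschitz regularity for $x$-independent operators and the Gehring lemma — are standard and already quoted in the paper.
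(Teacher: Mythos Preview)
Your proposal is correct and follows essentially the same good-$\lambda$/maximal-function scheme as the paper's proof, which likewise combines the interior Lipschitz estimate with the boundary approximation from \cite[Proposition~2.6]{QH4}/\cite[Corollary~2.13]{55Ph2} inside a Vitali-type covering argument (Lemma~\ref{lem:mainlem}). The only organizational difference is that the paper applies the maximal function directly to $|\nabla v|^{\gamma_0}$ and thereby lands on the $\gamma_0$-average on the right-hand side in one stroke, whereas you first prove the $L^q$--$L^p$ bound and then invoke a boundary Gehring step to descend to~$\gamma_0$; both routes work.
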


To prove Lemma \ref{rei-es}, we use the following lemma (see \cite[Theorem 3]{W03}). 

\begin{lemma}
	\label{lem:mainlem}
	Let $0<\epsilon<1$ and  $B_R$ be a ball of radius $R$ in $\mathbb{R}^n$.  Let $E\subset F\subset B_R$ be two measurable sets  with $|E|<\epsilon |B_R|$ and satisfy the following property: for all $x\in B_R$ and $\rho\in (0,R]$, we have $B_\rho(x)\cap B_R\subset F$  	provided $|E\cap B_\rho(x)|\geq \epsilon |B_\rho(x)|$.	Then $|E|\leq B\varepsilon |F|$ for some $B=B(n)$.
\end{lemma}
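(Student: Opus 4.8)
The plan is to deduce the estimate from the Lebesgue density theorem together with a Besicovitch-type covering argument, in the spirit of the Calder\'on--Zygmund decomposition adapted to balls rather than dyadic cubes. Write $B_R=B_R(x_0)$, and let $E'\subset E$ be the set of Lebesgue density points of $E$, so that $|E\setminus E'|=0$ and hence $|E|=|E'|$. Fix $x\in E'$. The function $h(\rho):=|E\cap B_\rho(x)|/|B_\rho(x)|$ is continuous on $(0,R]$, tends to $1$ as $\rho\downarrow 0$, and satisfies $h(R)\le |E|/|B_R|<\epsilon$ since $|B_R(x)|=|B_R|$ and $|E\cap B_R(x)|\le|E|$. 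By the intermediate value theorem there exists $\rho_x\in(0,R)$ with $h(\rho_x)=\epsilon$, that is,
\begin{equation*}
|E\cap B_{\rho_x}(x)|=\epsilon\,|B_{\rho_x}(x)|.
\end{equation*}
In particular $|E\cap B_{\rho_x}(x)|\ge \epsilon\,|B_{\rho_x}(x)|$, so the hypothesis of the lemma applies to the ball $B_{\rho_x}(x)$ and yields $B_{\rho_x}(x)\cap B_R\subset F$.

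Next I would apply the Besicovitch covering theorem to the family $\{B_{\rho_x}(x):x\in E'\}$: the radii $\rho_x$ are bounded by $R$ and each $x\in E'$ is the center of the ball $B_{\rho_x}(x)$, so there is a dimensional constant $N=N(n)$ and countable subfamilies $\mathcal G_1,\dots,\mathcal G_N$, each consisting of pairwise disjoint balls of the family, with $E'\subset\bigcup_{m=1}^N\bigcup_{B\in\mathcal G_m}B$. Every ball $B=B_{\rho_x}(x)$ occurring here carries the two properties established above, namely $|E\cap B|=\epsilon\,|B|$ and $B\cap B_R\subset F$.

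The remaining ingredient is the elementary observation that a ball $B_\rho(y)$ with center $y\in B_R$ and radius $\rho\le R$ contains a fixed fraction of its volume inside $B_R$, namely $|B_\rho(y)\cap B_R|\ge 2^{-n}|B_\rho(y)|$; this is seen by exhibiting a ball of radius $\rho/2$ contained in $B_\rho(y)\cap B_R$, translating its center towards $x_0$ when $y$ is too close to $\partial B_R$. Putting the pieces together,
\begin{align*}
|E|=|E'|&\le\sum_{m=1}^N\sum_{B\in\mathcal G_m}|E\cap B|=\epsilon\sum_{m=1}^N\sum_{B\in\mathcal G_m}|B|\le 2^n\epsilon\sum_{m=1}^N\sum_{B\in\mathcal G_m}|B\cap B_R|\\
&=2^n\epsilon\sum_{m=1}^N\Big|\bigcup_{B\in\mathcal G_m}(B\cap B_R)\Big|\le 2^n N(n)\,\epsilon\,|F|,
\end{align*}
where the second line uses the disjointness of the balls within each $\mathcal G_m$ and the inclusion $\bigcup_{B\in\mathcal G_m}(B\cap B_R)\subset F$. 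This gives the assertion with $B=2^nN(n)$.

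The continuity of $\rho\mapsto|E\cap B_\rho(x)|$ and the volume-fraction estimate are routine; the real point — and the step to watch — is the selection of the radius $\rho_x$. It uses the normalization $|E|<\epsilon|B_R|$ in an essential way: this is exactly what forces the density $h$ to have fallen below $\epsilon$ by the scale $\rho=R$, so that there is an intermediate scale at which $h$ equals $\epsilon$ precisely — large enough to trigger the hypothesis (giving $B_{\rho_x}(x)\cap B_R\subset F$), yet small enough that the upper bound $|E\cap B_{\rho_x}(x)|=\epsilon|B_{\rho_x}(x)|$ still holds. Had one used the Vitali $5r$-covering lemma instead of Besicovitch, the argument would work equally well but would additionally require splitting the selected balls into those with $\rho_x\le R/5$ — handled via $h(\rho)<\epsilon$ for $\rho\in(\rho_x,R]$ at scale $5\rho_x\le R$ — and the at most $10^n$ balls with $\rho_x>R/5$, for which one notes that the presence of a single such ball already forces $|B_R|\le 10^n|F|$.
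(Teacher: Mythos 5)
Your proof is correct. Note that the paper itself does not prove this lemma at all: it is quoted verbatim from Wang \cite[Theorem 3]{W03}, whose proof runs the same density-point/covering scheme but uses the Vitali covering lemma, so the disjoint balls must be dilated by the factor $5$ and one has to argue separately about balls whose radius is comparable to $R$ (exactly the case analysis you sketch in your closing remark). Your Besicovitch variant buys a cleaner argument: since the centers themselves are covered with bounded overlap, no dilation is needed, the exact equality $|E\cap B_{\rho_x}(x)|=\epsilon|B_{\rho_x}(x)|$ produced by the intermediate value theorem is all you use, and the only geometric input is the elementary bound $|B_\rho(y)\cap B_R|\geq 2^{-n}|B_\rho(y)|$ for $y\in B_R$, $\rho\leq R$, which you verify correctly. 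Two small points: in the Vitali aside you invoke ``$h(\rho)<\epsilon$ for $\rho\in(\rho_x,R]$'', which requires choosing $\rho_x$ as the \emph{largest} radius where the density equals $\epsilon$ --- harmless, but not what the plain intermediate value theorem gives, and not needed for your main (Besicovitch) argument; and the Besicovitch theorem is usually stated for closed balls, so one should formally take $\overline{B_{\rho_x}(x)}$, which changes nothing since spheres are null sets.
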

\begin{proof}[Proof of Lemma \ref{rei-es}] Assume that $\Omega$ is a  $(\delta,R_0)$-Reifenberg flat domain and $0<r\leq r_1\leq R_0/50$.
	
\noindent 	\underline{Step 1.}  Let ${\bf M}$ be the standard Hardy-Littlewood maximal function and write $\mathbf{1}_E$ to denote the characteristic function of a set $E$. Set $\rho=r/800$ and  for  $\lambda>0$ let 
$$E_{\lambda}=\left\{({\bf M}(\mathbf{1}_{B_{8\rho}(x_1)}|\nabla v|^{\gamma_0}))^{1/\gamma_0}>\lambda\right\}\cap B_{\rho}(x_1).$$

	 In this step, we show that for any $\epsilon>0$ one can find  constants $\delta_1=\delta_1(n,p,\Lambda,\epsilon)\in (0,1),\delta_2=\delta_2(n,p,\Lambda,\epsilon)\in (0,1)$ and $\Lambda_0=\Lambda_0(n,p,\gamma_0,\Lambda)>1$ such that if $\delta\leq \delta_1$, we have 
\begin{equation}|E_{\Lambda_0\lambda}|\leq C \epsilon |E_{\lambda}|\label{lam-good}
\end{equation}
	for any $\lambda\geq T_0$, where we define
 $$T_0:=\delta_2^{-1}\left(	\fint_{B_{800\rho}(x_1)} (|\nabla v|+s)^{\gamma_0}dx\right)^{\frac{1}{\gamma_0}}.$$

Since ${\bf M}$ is a bounded operator from $L^1(\mathbb{R}^{n})$ into $L^{1,\infty}(\mathbb{R}^{n})$,  we have for $\lambda\geq T_0$,
\begin{align}\label{5hh2310131}
\left|E_{\Lambda_0\lambda}\right|\leq \frac{C(n)}{(\Lambda_0\lambda)^{\gamma_0}}\int_{B_{8\rho}(x_1)} |\nabla v|^{\gamma_0}dx\leq C(n)(\delta_2/\Lambda_0)^{\gamma_0}|B_{800\rho}(x_1)|\leq \epsilon \left|B_\rho(x_1)\right|,
\end{align}
	provided $\delta_2\leq (800^{-n}\epsilon/C(n))^{1/\gamma_0}\Lambda_0$.

	 Next we verify that   for any $x\in B_{\rho}(x_1)$, $\rho_1\in(0,\rho]$ and $\lambda\geq  T_0$ we have
	\begin{equation}\label{2nd-check}
	\left|E_{\Lambda_0\lambda}\cap B_{\rho_1}(x)\right|\geq \epsilon \left|B_{\rho_1}(x)\right| \Longrightarrow B_{\rho_1}(x)\cap B_\rho(x_1)\subset E_\lambda,
	\end{equation}
	provided $\delta$ and $\delta_2$ are small enough depending on $n,p,\Lambda, \gamma_0,\epsilon$. Therefore, using \eqref{5hh2310131}-\eqref{2nd-check} and applying Lemma \ref{lem:mainlem} with $E= E_{\Lambda_0\lambda}$ and $F=E_\lambda$ we get \eqref{lam-good}.
	
	To prove \eqref{2nd-check},  take $x\in B_{\rho}(x_1)$, $\rho_1\in (0,\rho]$, and $\lambda\geq  T_0$, 
 and by contradiction, let us assume that $B_{\rho_1}(x)\cap B_\rho(x_1)\cap (E_\lambda)^c\not= \emptyset$,  i.e., there exists $x_2\in B_{\rho_1}(x)\cap B_\rho(x_1)$ such that 
\begin{equation}\label{x2lambda}
({\bf M}(\mathbf{1}_{B_{8\rho}(x_1)}|\nabla v|^{\gamma_0})(x_2))^{1/\gamma_0}\leq \lambda.
\end{equation}	

We need to prove that
	\begin{equation}\label{5hh2310133}
	\left|E_{\Lambda_0\lambda}\cap B_{\rho_1}(x)\right|<  \epsilon \left|B_{\rho_1}(x)\right|. 
	\end{equation}
	
	Clearly, for any $y\in B_{\rho_1}(x)$
$$
({\bf M}(\mathbf{1}_{B_{8\rho}(x_1)}|\nabla v|^{\gamma_0})(y))^{1/\gamma_0}\leq \max\left\{\left({\bf M}\left(\mathbf{1}_{B_{2\rho_1}(x)}|\nabla v|^{\gamma_0}\right)(y)\right)^{\frac{1}{\gamma_0}},3^{\frac{n}{\gamma_0}}\lambda\right\},$$
and thus 		 for all $\lambda\geq T_0$ and $\Lambda_0\geq 3^{\frac{n}{\gamma_0}}$,
\begin{equation}\label{inclusion}
 E_{\Lambda_0\lambda}\cap B_{\rho_1}(x)\subset\left\{\left({\bf M}\left(\mathbf{1}_{ B_{2\rho_1}(x)}|\nabla v|^{\gamma_0}\right)\right)^{\frac{1}{\gamma_0}}>\Lambda_0\lambda\right\}\cap B_{\rho}(x_1) \cap B_{\rho_1}(x).
\end{equation}	
	
	Now to prove \eqref{5hh2310133} we separately consider  the case $B_{4\rho_1}(x)\subset\subset \Omega$ and the case $\overline{B_{4\rho_1}(x)}\cap  \Omega^{c}\not=\emptyset$.
	
	\medskip
	
	\noindent {\bf 1. The case $B_{4\rho_1}(x)\subset\subset \Omega$:} Since $ \operatorname{div}\left( {A(x_1,\nabla v)} \right) = 0$ in $B_{4\rho_1}(x)$, by the standard regularity estimate, we have 
	$$
	||\nabla v||_{L^\infty(B_{3\rho_1}(x))}\leq C\left(	\fint_{B_{4\rho_1}(x)} (|\nabla v|+s)^{\gamma_0}\right)^{\frac{1}{\gamma_0}}\leq C_1\left(	\fint_{B_{5\rho_1}(x_2)} (|\nabla v|+s)^{\gamma_0}\right)^{\frac{1}{\gamma_0}}.$$
	
	Thus, using \eqref{x2lambda} and $s\leq \delta_2\lambda\leq \lambda$, we find
$$||\nabla v||_{L^\infty(B_{3\rho_1}(x))}\leq  C_1\left(\lambda+s\right)\leq 2 C_1\lambda.$$

Then for $\Lambda_0\geq \max\{3^{\frac{n}{\gamma_0}}, 4 C_1\}$, we have $	||\nabla v||_{L^\infty(B_{3\rho_1}(x))}\leq \frac{1}{2}\Lambda_0\lambda$ and so by \eqref{inclusion}
$E_{\Lambda_0\lambda}\cap B_{\rho_1}(x)=\emptyset.$ In particular, we have \eqref{5hh2310133}.

	\medskip
	
	\noindent {\bf 2. The case $\overline{B_{4\rho_1}(x)}\cap\Omega^{c}\not=\emptyset$:} Let $x_3\in\partial \Omega$ be such that $|x_3-x|=\text{dist}(x,\partial\Omega)$.  We have 
	$$B_{2\rho_1}(x)\subset B_{6\rho_1}(x_3)\subset B_{600\rho_1}(x_3)\subset B_{605\rho_1}(x_2).$$
	Thanks to  \cite[Proposition 2.6]{QH4}, (see also \cite[Corollary 2.13]{55Ph2}),	for any $\eta>0$ there exists $\delta_1=\delta_1(n,p,\Lambda,\eta)$ be such that the following holds. If $\delta\leq \delta_1$, there exists a function $\tilde{v}\in W^{1,\infty}(B_{6\rho_1}(x_3))$  such that $$
	\|\nabla \tilde{v}\|_{L^\infty(B_{6\rho_1}(x_3))}\leq C_0 \left(\fint_{B_{600\rho_1}(x_3)}(|\nabla v|+s)^{\gamma_0}\right)^{1/\gamma_0},$$
	and		$$
	\left(\fint_{B_{6\rho_1}(x_3)}|\nabla (v-\tilde{v})|^{\gamma_0}\right)^{\frac{1}{\gamma_0}}\leq  \eta\left(\fint_{B_{600\rho_1}(x_3)}(|\nabla v|+s)^{\gamma_0}\right)^{1/\gamma_0}.$$
	
	Note that if $\rho_1\leq \rho/100$, then 
	$$\left(\fint_{B_{600\rho_1}(x_3)}(|\nabla v|+s)^{\gamma_0}\right)^{1/\gamma_0}\leq 2^{\frac{n}{\gamma_0}} ({\bf M}(\mathbf{1}_{B_{8\rho}(x_1)}|\nabla v|^{\gamma_0})(x_2))^{1/\gamma_0} +s\leq 2^{\frac{n}{\gamma_0}+1}\lambda,$$
	 and if $\rho_1\geq  \rho/100$, then since $\rho_1\leq \rho$,
		$$\left(\fint_{B_{600\rho_1}(x_3)}(|\nabla v|+s)^{\gamma_0}\right)^{1/\gamma_0}\leq10^{\frac{3n}{\gamma_0}}\left(\fint_{B_{800\rho}(x_1)}(|\nabla v|+s)^{\gamma_0}\right)^{1/\gamma_0}\leq 10^{\frac{3n}{\gamma_0}} \delta_2\lambda. $$
	
      Hence, 
	$$
	\|\nabla \tilde{v}\|_{L^\infty(B_{2\rho_1}(x))}\leq 10^{\frac{3n}{\gamma_0}}C_0 \lambda,$$
and
 	$$\left(\fint_{B_{2\rho_1}(x)}|\nabla (v-\tilde{v})|^{\gamma_0}\right)^{\frac{1}{\gamma_0}}\leq 10^{\frac{4n}{\gamma_0}}\eta \lambda.$$
	
Choosing $\Lambda_0= \max\{3^{\frac{n}{\gamma_0}},4 C_1, 2^{\frac{1}{\gamma_0}} 10^{\frac{3n}{\gamma_0}} C_0\}$, we have 
	\begin{align*}
	 |E_{\Lambda_0\lambda}\cap B_{\rho_1}(x)|&\leq \left|\left\{\left({\bf M}\left(\mathbf{1}_{ B_{2\rho_1}(x)}|\nabla (v-\tilde{v})|^{\gamma_0}\right)\right)^{\frac{1}{\gamma_0}}>2^{-\frac{1}{\gamma_0}}\Lambda_0\lambda\right\}\right|\\&\leq  \frac{C(n)}{\left(2^{-\frac{1}{\gamma_0}}\Lambda_0\lambda\right)^{\gamma_0}} \int_{B_{2\rho_1}(x)}|\nabla (v-\tilde{v})|^{\gamma_0}
	 \\&\leq  \frac{2 C(n)}{\left(\Lambda_0\lambda\right)^{\gamma_0}} \left(10^{\frac{4n}{\gamma_0}}\eta \lambda\right)^{\gamma_0}|B_{2\rho_1}(x)|\\&< \epsilon \left|B_{\rho_1}(x)\right|,
	\end{align*}
	for $\eta=\left(\epsilon/(10^{5n}C(n))\right)^{1/\gamma_0}$. This gives \eqref{5hh2310133}.

	 \medskip
	 
\noindent	\underline{Step 2.} Thanks to  \eqref{lam-good},
	we have that for $\lambda_0=\Lambda_0 T_0$,
	\begin{align*}
	&\int_{B_{\rho}(x_1)}	({\bf M}(\mathbf{1}_{B_{8\rho}(x_1)}|\nabla v|^{\gamma_0}))^{q/\gamma_0} dx\\&= q\int_{0}^{\infty}\lambda^{q-1}\left|\left\{ |({\bf M}(\mathbf{1}_{B_{8\rho}(x_1)}|\nabla v|^{\gamma_0}))^{1/\gamma_0}>\lambda\right\}\cap B_{\rho}(x_1)\right|d\lambda
	\\ & \leq  q\int_{0}^{\lambda_0}\lambda^{q-1}|B_{\rho}(x_1)|d\lambda\\&+ 
	Cq\epsilon\int_{\lambda_0}^{\infty}\lambda^{q-1}\left|\left\{|({\bf M}(\mathbf{1}_{B_{8\rho}(x_1)}|\nabla v|^{\gamma_0}))^{1/\gamma_0}>\lambda/\Lambda_0\right\}\cap B_{\rho}(x_1)\right|d\lambda\\&\leq  \lambda_0^{q}|B_{\rho}(x_1)| +C\Lambda_0^q\epsilon \int_{B_{\rho}(x_1)}	({\bf M}(\mathbf{1}_{B_{8\rho}(x_1)}|\nabla v|^{\gamma_0}))^{q/\gamma_0} dx.
	\end{align*}
	
Thus 	letting $\epsilon=\frac{1}{2 C\Lambda_0^q}$, we get \footnote{ A limiting argument can be used to justify that $\int_{B_{\rho}(x_1)}	({\bf M}(\mathbf{1}_{B_{8\rho}(x_1)}|\nabla v|^{\gamma_0}))^{q/\gamma_0} dx$ is finite.}
		\begin{equation*}
\fint_{B_{\rho}(x_1)}|\nabla v|^{q}dx\leq C\, T_0^q =   C\left(	\fint_{B_{800\rho}(x_1)} (|\nabla v|+s)^{\gamma_0}dx\right)^{\frac{q}{\gamma_0}}.
	\end{equation*}
	
	Now recall that $\rho=r/800$ and hence \eqref{z8} follows. This completes the proof of the theorem.	
\end{proof}

The following technical lemma can be found in \cite[Lemma 3.4]{HL}. 

\begin{lemma}\label{lem00}
	Let $\phi$ be a nonnegative and nondecreasing functions on $(0,D]$. Suppose that there are nonnegative constants $A,B,\alpha,\beta$ with $\alpha>\beta$ such that 
	\begin{align*}
	\phi(\rho)\leq A\left[(\rho/R)^\alpha+\eta\right]\phi(R)+BR^\beta,
	\end{align*} for all $0<\rho\leq R\leq D$.	
	Then for any $\gamma\in [\beta,\alpha)$, there exits positive $\eta_0=\eta_0(\alpha,\beta, \gamma,A)$ such that if $\eta\leq \eta_0$ we have 
	\begin{align*}
	\Phi(\rho)\leq C (\rho/R)^{\gamma}\Phi(R)+ C B \rho^\beta,
	\end{align*}
	for all $0<\rho\leq R\leq D$. Here $C=C(\alpha,\beta, \gamma,A)$.
\end{lemma}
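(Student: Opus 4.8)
The plan is to run the classical dyadic iteration argument that is standard for Campanato-type lemmas of this shape. Since $\gamma<\alpha$, I would first fix a scaling ratio $\tau\in(0,1)$ small enough that $A\,\tau^{\alpha-\gamma}\le \tfrac12$, and then set the threshold $\eta_0:=\tfrac{1}{2A}\,\tau^{\gamma}$. With this choice, whenever $\eta\le\eta_0$ the hypothesis evaluated at $\rho=\tau R$ simplifies, using $A\tau^{\alpha}\le\tfrac12\tau^{\gamma}$ and $A\eta\le\tfrac12\tau^{\gamma}$, to the one-step recursion
$$\phi(\tau R)\le \tau^{\gamma}\,\phi(R)+B\,R^{\beta},\qquad 0<R\le D.$$

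Next I would iterate this recursion: substituting $R\mapsto\tau^{j}R$ for $j=0,\dots,k-1$ and composing the resulting inequalities gives, by induction on $k$,
$$\phi(\tau^{k}R)\le \tau^{k\gamma}\,\phi(R)+B\,R^{\beta}\sum_{j=0}^{k-1}\tau^{j\gamma}\tau^{(k-1-j)\beta}.$$
The one routine computation here is to bound the geometric-type sum; writing $m=k-1-j$ it equals $\tau^{(k-1)\gamma}\sum_{m=0}^{k-1}(\tau^{\beta-\gamma})^{m}$, and since $\gamma>\beta$ this is dominated by $c_{1}(\tau,\gamma,\beta)\,\tau^{k\beta}$, so that $\phi(\tau^{k}R)\le \tau^{k\gamma}\phi(R)+c_{1}B\,(\tau^{k}R)^{\beta}$ for every $k\in\mathbb{N}$. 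Finally, to pass from the discrete scales $\tau^{k}R$ to an arbitrary radius $\rho\le R$, I would pick $k\ge1$ with $\tau^{k+1}R<\rho\le\tau^{k}R$ and use the monotonicity of $\phi$ together with $\tau^{k}R<\rho/\tau$; this converts the last estimate into $\phi(\rho)\le C\big[(\rho/R)^{\gamma}\phi(R)+B\rho^{\beta}\big]$ with $C=C(\alpha,\beta,\gamma,A)$, the remaining case $\rho>\tau R$ being immediate from monotonicity.

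The only borderline point in the statement is that the exponent $\gamma=\beta$ is permitted, where the geometric sum degenerates and produces a spurious factor $k\sim\log(R/\rho)$. I would dispose of this by applying the case already proved with a slightly larger exponent $\gamma'=\tfrac{\beta+\alpha}{2}\in(\beta,\alpha)$ --- which still yields a threshold $\eta_{0}=\eta_{0}(\alpha,\beta,\gamma,A)$ --- and then using $(\rho/R)^{\gamma'}\le(\rho/R)^{\beta}$ since $\rho\le R$. There is no deep obstacle here: the entire content is the bookkeeping of the iteration, and the step that most deserves care is verifying that both $C$ and $\eta_{0}$ end up depending only on $\alpha,\beta,\gamma,A$ and not on $B$, $D$, or $\phi$, and that the geometric series is summed in the convergent direction (in powers of $\tau^{\beta-\gamma}$, not $\tau^{\gamma-\beta}$).
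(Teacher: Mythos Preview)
Your argument is correct and is precisely the standard dyadic iteration proof of this lemma; the paper does not give its own proof but simply cites \cite[Lemma~3.4]{HL}, where exactly this argument appears. One cosmetic remark: your closing comment about summing ``in powers of $\tau^{\beta-\gamma}$'' is slightly misleading since $\tau^{\beta-\gamma}>1$, but your actual computation is fine---factoring out $\tau^{(k-1)\beta}$ instead makes the convergent geometric series in $\tau^{\gamma-\beta}<1$ visible immediately and avoids the ambiguity.
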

 
We are now ready to finish the proof of Theorem \ref{boundary}. Let $\kappa\in (0,1/2)$ be fixed.  By Lemma \ref{rei-es}, there  exists $\delta=\delta(\kappa)>0$ such that if $\Omega$ is a  $(\delta,R_0)$-Reifenberg flat domain 
then 
	\begin{align*}
		\int_{B_{\varepsilon r}(x_1)}|\nabla v|^{\gamma_0}\leq C	\varepsilon^{n-\gamma_0\kappa/2}\int_{B_{ r}(x_1)}|\nabla v|^{\gamma_0} 
	\end{align*}
	for all $r\leq r_1$ and $\epsilon\in (0,1/800)$. 
	Writing  $B_r=B_r(x_1)$, we  thus have  
\begin{align}\nonumber
	\int_{B_{\varepsilon r}} &|\nabla u|^{\gamma_0}\\
	&\leq c	\int_{B_{\varepsilon r}}|\nabla v|^{\gamma_0}+ c	\int_{B_{\varepsilon r}}|\nabla v-\nabla w|^{\gamma_0}+ c\int_{B_{\varepsilon r}}|\nabla u-\nabla w|^{\gamma_0}\nonumber\\ & \nonumber\leq c\,	\varepsilon^{n-\gamma_0\kappa/2}\int_{B_{ r}}|\nabla v|^{\gamma_0}+ c	\int_{B_{r}}|\nabla v-\nabla w|^{\gamma_0}+c\int_{B_{r}}|\nabla u-\nabla w|^{\gamma_0}
	\\ & \leq c\,	\varepsilon^{n-\gamma_0\kappa/2}\int_{B_{ r}}|\nabla u|^{\gamma_0}+ c	\int_{B_{r}}|\nabla v-\nabla w|^{\gamma_0}+ c\int_{B_{r}}|\nabla u-\nabla w|^{\gamma_0}.\label{es10}
\end{align}

At this point, we use  Lemma \ref{111120149+} to bound the last term in \eqref{es10} and  use \eqref{es-1} to bound the seccond to last term in \eqref{es10}.
This gives  for any $\epsilon, \eta \in (0,1/800)$,
\begin{align*}
\int_{B_{\varepsilon r}}|\nabla u|^{\gamma_0} & \leq C	\left(\varepsilon^{n-\gamma_0\kappa/2}+ \omega(r)^{\gamma_0} \right)\int_{B_{ 2r}}(|\nabla u|+s)^{\gamma_0}+  C r^n\left[	\frac{|\mu|(B_{2r})}{r^{n-1}}\right]^{\frac{\gamma_0}{ p-1}}\\&~~+C r^{n(p-1)}	\left(\frac{|\mu|(B_{2r})}{r^{n-1}}\right)^{\gamma_0}\left(	\int_{B_{2r}}(|\nabla u|+s)^{\gamma_0}dx\right)^{2-p}\\
& \leq C	\left(\varepsilon^{n-\gamma_0\kappa/2}+ \omega(r)^{\gamma_0}+\eta \right)\int_{B_{ 2r}}(|\nabla u|+s)^{\gamma_0}+  C_{\eta}\, r^n\left[	\frac{|\mu|(B_{2r})}{r^{n-1}}\right]^{\frac{\gamma_0}{ p-1}}. 
\end{align*}
Here we use Young's inequality in the last inequality. Note that this holds for any $r\in(0,r_1] $ and by enlarging $C$ if necessary it also holds for any $\epsilon\in (0,2)$.
Thus we find
\begin{align*}
\int_{B_{\rho}(x_1)}|\nabla u|^{\gamma_0}
& \leq C	\left((\rho/R)^{n-\gamma_0\kappa/2}+ \omega(r_1)^{\gamma_0}+\eta \right)\int_{B_{R}(x_1)}|\nabla u|^{\gamma_0}\\
&\quad + C_\eta \, R^{n-\gamma_0\kappa} r_1^{\gamma_0\kappa} \left(\Big[{\bf P}^{2 {\rm diam}(\Omega)}_{\gamma_0}(|\mu|)(x_0)\Big]^{\frac{1}{p-1}}+s^{\gamma_0}\right),
\end{align*}
for all $0<\rho\leq R\leq 2r_1$.

Now applying Lemma \ref{lem00} to $\phi(r)=\int_{B_{r}(x_1)}|\nabla u|^{\gamma_0}$, $r\in(0,2r_1)$, we obtain 
\begin{align*}
\int_{B_{\rho}(x_1)}|\nabla u|^{\gamma_0}
& \leq C(\rho/R)^{n-\gamma_0\kappa}\int_{B_{R}(x_1)}|\nabla u|^{\gamma_0}\\
&\quad +  C \, \rho^{n-\gamma_0\kappa} r_1^{\gamma_0\kappa} \left(  \Big[{\bf P}^{2 {\rm diam}(\Omega)}_{\gamma_0}(|\mu|)(x_0)\Big]^{\frac{1}{p-1}} +s^{\gamma_0}\right),
\end{align*}
provided that $\omega(r_1)$ and $\eta$ are small enough. In particular, for $R=2r_1$ and $\rho=2 d(x_0)$ we find 
\begin{align*}
&\fint_{B_{2d(x_0)}(x_1)}  |\nabla u|^{\gamma_0}\\
& \quad \leq C\left(\frac{r_1}{d(x_0)}\right)^{\gamma_0\kappa}\left(\fint_{B_{ 2 r_1}(x_1)}|\nabla u|^{\gamma_0}+  \Big[{\bf P}^{2 {\rm diam}(\Omega)}_{\gamma_0}(|\mu|)(x_0)\Big]^{\frac{1}{p-1}} +s^{\gamma_0}\right).
\end{align*}

This implies 
\begin{align*}
&\fint_{B_{d(x_0)}(x_0)}  |\nabla u|^{\gamma_0}\\
& \quad \leq C\left(\frac{r_1}{d(x_0)}\right)^{\gamma_0\kappa}\left(\fint_{B_{ 2 r_1}(x_1)}|\nabla u|^{\gamma_0}+  \Big[{\bf P}^{2 {\rm diam}(\Omega)}_{\gamma_0}(|\mu|)(x_0)\Big]^{\frac{1}{p-1}} +s^{\gamma_0}\right)\\
& \quad \leq C(r_1)\,  d(x_0)^{-\gamma_0\kappa}   \left(\Big[{\bf P}^{2 {\rm diam}(\Omega)}_{\gamma_0}(|\mu|)(x_0)\Big]^{\frac{1}{p-1}} +s^{\gamma_0}\right),
\end{align*}
where we used \eqref{int0} in the last inequality.

Now applying this result to \eqref{Wwithtail} we arrive at  \eqref{ine3}. This completes the proof of Theorem \ref{boundary}. 
\begin{remark} Our argument  works also in the case $p>2-\frac{1}{n}$ provided we use
	 the local interior pointwise gradient estimates obtained in the  work \cite{Duzamin2, KM}. In this case, of course the truncated 
	 Riesz's potential ${\bf I}^{2{\rm diam}(\Omega)}_{1}(|\mu|)$ is used in placed of	 ${\bf P}^{2{\rm diam}(\Omega)}_{\gamma_0}(|\mu|)^{1/\gamma_0}$. 
\end{remark}

\end{document}